\theoremstyle{plain}
\newtheorem{thm}{Theorem}[section]
\newtheorem*{thm*}{Theorem}
\newtheorem{lem}[thm]{Lemma}
\newtheorem{defin}[thm]{Definition}
\newtheorem{prop}[thm]{Proposition}
\newtheorem*{prop*}{Proposition}
\theoremstyle{definition}
\newtheorem{obs}[thm]{Observation}
\newtheorem{example}[thm]{Example}
\newtheorem{ques}{Question}
\newtheorem{cor}[thm]{Corollary}
\newcommand{\NN}{\mathbb{N}}
\newcommand{\RR}{\mathbb{R}}
\begin{document}
\title{The set of uniquely ergodic IETs is path-connected}
\author[J.~Chaika]{Jon Chaika}
\thanks{J. Chaika was supported in part by NSF grant DMS 1330550.}
\address{University of Utah, Department of Mathematics, 209\\
155 S 1400 E RM 233\\
Salt Lake City, UT, 84112-0090 USA
}
           \email{chaika@math.utah.edu}

\author[S.~Hensel]{Sebastian Hensel}
\address{The University of Chicago, Department of Mathematics\\
5734 South University Avenue, Chicago, Illinois 60637 USA}
           \email{hensel@math.uchicago.edu}

\maketitle
\begin{figure}[h]
  \centering
  \includegraphics[width=0.6\textwidth]{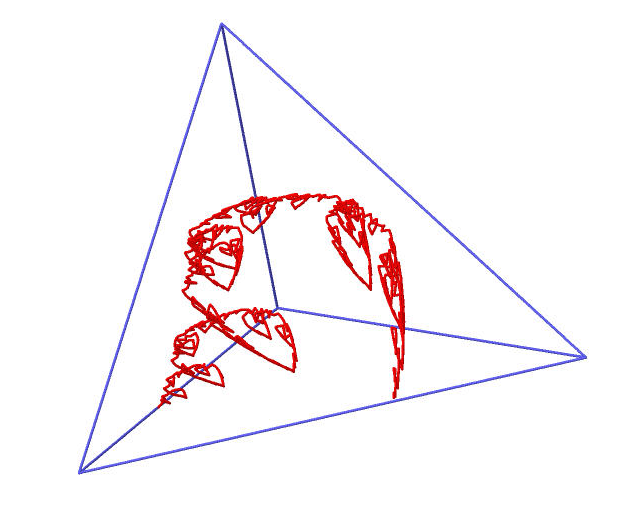}
  \caption{A path of uniquely ergodic $4$-IETs}
\end{figure}
\section{Introduction}\label{sec:intro}
An interval exchange transformation (from now on
abbreviated to \emph{IET}) is a piece-wise isometric map of an
interval to itself that rearranges sub-intervals according to a 
permutation $\pi$ (see Section~\ref{sec:iets} for formal definitions).
While simple to define, interval exchange transformations have deep and interesting dynamical
properties as well as connections to foliations on Riemann surfaces, and
therefore have been the subject of intense study over the last
years. See \cite{zorich}, \cite{yoccoz} or \cite{viana survey} for good surveys.

In this article we are concerned not with the dynamics of a single
IET, but the set of all IETs with a given permutation $\pi$ of $n$
symbols. This set 
carries a natural topology and is homeomorphic to an Euclidean simplex
$\Delta_\pi$ of dimension $n-1$.
It is known that unless the permutation has obvious combinatorial
obstructions most IETs are uniquely ergodic:
\begin{thm*}(Masur \cite{masur}, Veech \cite{gauss measures}) 
Given an irreducible permutation almost every IET with that
permutation is uniquely ergodic with respect to Lebesgue measure. 
\end{thm*}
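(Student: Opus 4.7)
The plan is to prove this via Masur's criterion and ergodicity of the Teichmüller geodesic flow, translating the question about IETs into a question about translation surfaces. First I would pass from the $(n-1)$-dimensional simplex $\Delta_\pi$ of IETs to a space of translation surfaces (with marked horizontal saddle connection playing the role of the interval). Any IET $T$ with irreducible permutation $\pi$ arises as the first return map of the vertical straight-line flow on such a surface $X_T$ to the horizontal segment. Crucially, $T$ is uniquely ergodic (with respect to Lebesgue measure) if and only if the vertical flow on $X_T$ is uniquely ergodic, and the natural Lebesgue measure class on $\Delta_\pi$ corresponds to the measure class induced on a transversal by the Masur--Veech volume on the stratum.

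Next I would invoke the Teichmüller geodesic flow $g_t$ on the relevant stratum of (area one) translation surfaces. Recall that $g_t$ stretches the horizontal direction by $e^t$ and contracts the vertical direction by $e^{-t}$, so applying $g_t$ to $X_T$ corresponds to renormalizing the vertical flow. The main analytic input is Masur's criterion: if the forward $g_t$-orbit of $X_T$ is recurrent to a compact subset of the stratum, then the vertical flow on $X_T$ is uniquely ergodic. This criterion reduces the problem to a recurrence statement about the Teichmüller flow.

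Then I would use the foundational result, due independently to Masur and Veech, that the Teichmüller flow preserves a finite, ergodic measure in the Lebesgue class on each connected component of a stratum. Granted this, Poincaré recurrence immediately gives that almost every translation surface in the stratum has a $g_t$-orbit that returns infinitely often to any compact set of sufficiently large measure. Combining this with Masur's criterion, almost every translation surface in the stratum has uniquely ergodic vertical flow, and hence almost every IET with permutation $\pi$ is uniquely ergodic. A short Fubini-type argument (using that the Lebesgue class on $\Delta_\pi$ disintegrates the Masur--Veech measure along the transversal of horizontal-segment-marked surfaces) finishes the reduction back to IETs.

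The main obstacle in this program is of course Masur's criterion itself, which requires a careful non-divergence estimate: one must show that if a translation surface $X$ has a non-uniquely-ergodic vertical flow, then the $g_t$-orbit of $X$ must escape every compact set, quantitatively exhibiting a degenerating cylinder or short saddle connection whose holonomy controls a would-be singular invariant measure. An alternative that avoids this, and more in the original Veech spirit, would be to replace the Teichmüller flow with Rauzy--Veech induction on $\Delta_\pi$, establishing ergodicity of a natural $\sigma$-finite renormalization measure and deducing unique ergodicity of almost every IET via a Birkhoff-sum / Kac-type comparison between the lengths of the combinatorial towers; either route ultimately hinges on the same recurrence phenomenon.
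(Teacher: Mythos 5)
This statement is quoted in the paper as background (attributed to Masur and Veech) and is not proved there, so there is no ``paper proof'' to compare against; your proposal has to be judged on its own. As an outline it is faithful to the two standard arguments: Masur's proof does proceed by suspending the IET to a translation surface, invoking his criterion (non-divergence of the Teichm\"uller geodesic orbit implies unique ergodicity of the vertical flow), and then using finiteness and ergodicity of the Masur--Veech measure together with Poincar\'e recurrence and a Fubini argument over the transversal; Veech's proof works directly with Rauzy--Veech induction and an absolutely continuous invariant (``Gauss'') measure for the renormalization. You correctly identify the genuine analytic content --- Masur's criterion in one route, recurrence/ergodicity of the renormalization in the other --- and you are right that these are the steps one cannot wave away. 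Two small cautions: the zippered-rectangle suspension requires a choice of suspension data, and the Fubini step needs the observation that unique ergodicity of the IET depends only on the length data and not on the suspension, so that a full-measure statement upstairs descends to the simplex $\Delta_\pi$; and the Masur--Veech measure on a stratum is finite but its ergodicity (or, in Veech's setting, the conservativity of the $\sigma$-finite renormalization measure) is itself a substantial theorem, not a formality. As a blueprint citing those inputs, your sketch is correct; as a self-contained proof it is not, but neither is the paper's one-line citation.
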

On the other hand, it is known that the set of non-uniquely ergodic
IETs is also fairly large in a geometric sense:
\begin{thm*}(Masur-Smillie \cite{masur smillie}) 
The set of minimal and not uniquely ergodic IETs with a non-degenerate
permutation on $n$ intervals has Hausdorff dimension greater than
$n-1$. 
\end{thm*}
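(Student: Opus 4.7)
My plan is to construct a Cantor-like family of minimal, non-uniquely ergodic IETs inside $\Delta_\pi$ and bound its Hausdorff dimension from below via a Moran-type computation.

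I would work through Rauzy-Veech induction, which partitions $\Delta_\pi$ into sub-simplices indexed by finite Rauzy-Veech paths and controls unique ergodicity via the asymptotic behavior of the associated cocycle. Using non-degeneracy of $\pi$, I would first exhibit two finite Rauzy-Veech loops $w_+, w_-$ whose iterated applications push the length vector toward two distinct proper subsets of the intervals. I would then parameterize a family of IETs by prescribing infinite itineraries of the form $w_+^{k_1} w_-^{l_1} w_+^{k_2} w_-^{l_2} \cdots$, with each $k_i, l_i$ chosen freely from $\{N, \ldots, N+M\}$. For $N$ large and $M$ appropriate, the itinerary never terminates, Keane's condition applies, and every resulting IET is minimal. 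The alternation between $w_+$ and $w_-$ is arranged so that the Rauzy-Veech cocycle has at least two surviving slow directions in the limit, producing two linearly independent invariant probability measures.

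Different admissible sequences $(k_i, l_i)$ yield different IETs, giving a Cantor subset of minimal, non-uniquely ergodic IETs in $\Delta_\pi$. Its Hausdorff dimension is then bounded below by $\log((M+1)^2)/\log(1/\rho)$, where $\rho$ is the appropriate contraction rate of the matrices associated to the building-block words, via a standard self-similar estimate for the nested sub-simplices.

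The hardest step, and the substance of Masur and Smillie's work, is to choose $w_\pm, N, M$ so that (i) the two surviving invariant measures in the limit are genuinely distinct, and (ii) the resulting dimension bound strictly exceeds $n-1$. Both points require sharp spectral estimates for the Rauzy-Veech cocycle; in the original argument these come from the hyperbolicity of the Teichm\"uller flow on the moduli space of abelian differentials and Masur's criterion connecting unique ergodicity to recurrence of Teichm\"uller geodesics, translated back to combinatorial data on the Rauzy graph.
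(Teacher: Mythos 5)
This theorem is quoted in the paper purely as background, with a citation to \cite{masur smillie}; the paper contains no proof of it, so there is no internal argument to compare your plan against. Judged on its own terms, your plan has two genuine gaps. First, if the exponents $k_i,l_i$ are confined to a fixed finite window $\{N,\dots,N+M\}$, the resulting IETs will typically be \emph{uniquely} ergodic rather than non-uniquely ergodic: the Rauzy path is then an infinite concatenation of blocks from a finite alphabet whose products are eventually positive with uniformly bounded norms, and exactly as in Theorem~\ref{thm:ue-criterion} (and the contraction arguments of Sections~\ref{sec:finite-depth}--\ref{sec:infinite-depth} of this paper) the nested simplices $M(T,n)\Delta$ contract to a point, killing one of your two ``slow directions.'' To keep two independent invariant measures alive one must let $k_i,l_i\to\infty$ rapidly, so that the nested simplices converge to a positive-dimensional face; but that destroys the self-similar structure on which your Moran estimate $\log((M+1)^2)/\log(1/\rho)$ rests.

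Second, even granting a Cantor set, a Moran bound over a finite alphabet is a fixed constant with no mechanism forcing it to be close to the dimension of $\Delta_\pi$. The content of Masur--Smillie is that the set is larger than a codimension-one set (for $n=4$ Athreya--Chaika \cite{athreya-chaika} give the sharp value $(n-1)-\tfrac12=\tfrac52$), and a lower bound that close to full dimension cannot come from a single self-similar Cantor set: one needs a fibered construction in which roughly $n-2$ of the parameters vary over open sets and only the remaining direction is Cantor-like, with the $\tfrac12$ coming from a summability condition. Masur and Smillie carry this out geometrically --- flat surfaces, slit constructions, and estimates on the set of directions where the slit holonomies are summable --- rather than through Rauzy induction. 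Your final paragraph correctly locates the difficulty, but in doing so defers essentially the entire content of the theorem.
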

The Hausdorff dimension of minimal and not uniquely ergodic 4-IETs has been computed by the J. Athreya and the first named author.
\begin{thm*}[Athreya-Chaika \cite{athreya-chaika}] The set of minimal and not uniquely ergodic IETs with a non-degenerate
permutation on $4$ intervals has Hausdorff dimension greater than
$\frac 5 2$. 
\end{thm*}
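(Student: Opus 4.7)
The plan is to exhibit a fractal family of minimal, non-uniquely ergodic $4$-IETs whose Hausdorff dimension exceeds $5/2$. Since the simplex $\Delta_\pi$ is $3$-dimensional, the conclusion says the non-uniquely ergodic set occupies more than half the ambient dimension, so any construction has to be strictly ``thicker'' than what the general Masur--Smillie strategy readily produces when specialized to $n=4$.

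First, I would fix an irreducible non-degenerate permutation $\pi$ on $4$ symbols, for concreteness the hyperelliptic one, and parametrize $\Delta_\pi$ by infinite admissible paths in the Rauzy diagram $\mathcal{R}(\pi)$ via Rauzy--Veech induction. Following Keane and Masur--Smillie, non-unique ergodicity can be produced by choosing paths that alternately apply two distinct loops $\gamma^{(1)}, \gamma^{(2)}$ in $\mathcal{R}(\pi)$ a prescribed number of times $n_1, n_2, \ldots$, where the $n_i$ grow quickly enough that the partial products of Rauzy matrices leave two distinct invariant projective directions surviving in the limit, giving two distinct ergodic invariant measures.

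Second, to enlarge this one-parameter construction to a large family, I would allow each $n_i$ to range over an interval $[N_i, (1+\epsilon)N_i]$ for a carefully chosen growth rate $(N_i)$. This yields a Cantor-like set $C\subset\Delta_\pi$: each prefix $(n_1,\ldots,n_k)$ determines a cylinder in $\Delta_\pi$ whose shape is controlled by the singular values of the partial product of Rauzy matrices. A standard mass distribution principle then gives a lower bound on $\dim_H C$ in terms of the trade-off between combinatorial branching (number of admissible sequences at level $k$) and geometric shrinkage (diameter of the level $k$ cylinder).

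The main obstacle is pushing the exponent past $5/2$. The $4$-IET case is special because the associated translation surfaces lie in genus $2$ strata ($\mathcal{H}(2)$ or $\mathcal{H}(1,1)$), where the non-tautological Lyapunov spectrum of the Kontsevich--Zorich cocycle is small and well-understood. Exploiting these exponent bounds, together with a careful choice of loops $\gamma^{(i)}$ whose matrices have favorable singular value ratios, one can estimate both cylinder diameters and admissible sequence counts sharply enough to yield $5/2$. This quantitative balancing is the crux of the argument: the Masur--Smillie construction for general $n$ does not track these quantities sharply enough in the $4$-IET setting, and the genus $2$ geometry is what allows the sharper bound.
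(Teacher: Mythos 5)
This statement is not proved in the paper at all: it is quoted verbatim from Athreya--Chaika \cite{athreya-chaika} as background in the introduction, so there is no internal proof to compare your argument against. Your proposal therefore has to stand on its own, and as written it does not: it is an outline whose decisive step is explicitly deferred. The first two stages (alternating two Rauzy loops $\gamma^{(1)},\gamma^{(2)}$ with rapidly growing powers $n_i$ to produce two surviving projective directions, then letting each $n_i$ range over $[N_i,(1+\epsilon)N_i]$ to get a Cantor set and applying the mass distribution principle) are a reasonable sketch of the standard Masur--Smillie/Keane mechanism, but everything that makes the theorem true is hidden in the sentence ``this quantitative balancing is the crux of the argument.'' To get dimension $5/2=(n-1)-\tfrac12$ you must show that at each stage the number of admissible choices is comparable to the \emph{square root} of the reciprocal of the cylinder contraction in the critical direction; for the naive two-loop alternation with superexponentially growing $N_i$ (which is what one needs for non-unique ergodicity via Veech's criterion) this ratio typically degenerates and yields dimension $2$ plus something small, not $2+\tfrac12$. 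No estimate in the proposal rules this out.

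Moreover, the ingredient you name as the key one --- bounds on the non-tautological Kontsevich--Zorich exponents in genus $2$ --- is not what drives the actual result. The Athreya--Chaika theorem rests on their computation that the set of non-uniquely ergodic directions on (almost every) surface in $\mathcal{H}(2)$ has Hausdorff dimension exactly $\tfrac12$, which is a Cheung--Eskin--Masur style construction with slits/splittings and a delicate counting of branches whose number squares against the scale at each step, combined with the Masur--Smillie upper bound; the passage to $4$-IETs is then a fibering/dimension-count over the direction parameter. Your proposal neither carries out the $\tfrac12$ lower bound for directions nor supplies the product/slicing argument transferring it to the $3$-dimensional simplex of IETs. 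As it stands the proposal identifies a plausible family of techniques but contains no argument for the inequality $\dim_H>\tfrac52$, so it has a genuine gap.
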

In this article we investigate the set of uniquely ergodic IETs from a
topological point of view. More precisely, the main result of this paper is 
\begin{thm}\label{thm:main}
  Let $n\geq 4$ and let $\pi$ be any non-degenerate permutation on $n$
  symbols. Then the set of uniquely ergodic unit length IETs with
  permutation $\pi$ is path connected.
\end{thm}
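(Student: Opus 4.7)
Fix a non-degenerate permutation $\pi$ on $n\geq 4$ symbols, and write $\mathcal{UE}_\pi \subset \Delta_\pi$ for the set of uniquely ergodic unit length IETs with permutation $\pi$. Given $T_0, T_1 \in \mathcal{UE}_\pi$, I want to construct a continuous path $t \mapsto T_t \in \mathcal{UE}_\pi$. Since minimal non-uniquely ergodic IETs are dense in $\Delta_\pi$ with positive Hausdorff codimension by the Masur-Smillie and Athreya-Chaika bounds, the straight-line segment almost surely fails, and some combinatorial-geometric construction is required.

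My principal technical tool would be Rauzy-Veech induction. It partitions $\Delta_\pi$ into projective sub-simplices and assigns to each $T$ an infinite path in a Rauzy graph; unique ergodicity is a tail property of this path via the Veech-Masur-Boshernitzan criteria. In particular, two IETs whose Rauzy-Veech futures are suitably similar are simultaneously uniquely ergodic. The strategy is to assemble the connecting path from short arcs, each contained in a single deep Rauzy-Veech cylinder, so that within each arc one can pass to a renormalized simplex and maintain uniform control on unique ergodicity.

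I would then induct on $n$, with the base case $n=4$ handled by a direct construction. For the inductive step, given $T \in \mathcal{UE}_\pi$, the aim would be to build an arc in $\mathcal{UE}_\pi$ from $T$ to an IET lying on a boundary face of $\Delta_\pi$ where one length degenerates to zero. Such a face corresponds, after an identification, to $(n-1)$-IETs with an induced permutation, whose uniquely ergodic set is path-connected by inductive hypothesis; this reduces the problem to connecting two IETs lying in boundary faces, which one joins via a path on the boundary plus two arcs. The arc from $T$ to the boundary would be produced by picking a sufficiently deep Rauzy-Veech cylinder containing $T$ and interpolating toward a distinguished vertex of that cylinder, chosen so that the endpoint sits on the desired face.

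The main obstacle is ensuring that every IET on the constructed arc is uniquely ergodic. Unique ergodicity is only a $G_\delta$ (not open) property, so small perturbations can a priori destroy it, and the dense non-uniquely ergodic set forbids any naive transversality argument. The crucial technical ingredient would therefore be a quantitative unique-ergodicity criterion — for instance a uniform hyperbolicity or quantitative Diophantine condition on the Rauzy-Veech cocycle — which is \emph{open} in the parameter space and hence stable along the projective interpolations. Isolating such a criterion, checking that a dense enough subset of $\mathcal{UE}_\pi$ satisfies it, and verifying that it is preserved along the explicit arcs used in the inductive construction is where the real technical work lies.
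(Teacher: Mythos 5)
Your overall architecture (Rauzy--Veech induction as the main tool, reduction to lower-dimensional faces of $\Delta_\pi$, an explicit base case at $n=4$) matches the paper's in broad outline, but the step you identify as the ``crucial technical ingredient'' cannot work as stated. You propose to guarantee unique ergodicity along the connecting arcs by means of a quantitative criterion that is \emph{open} in parameter space. No such criterion exists: the IETs with a saddle connection form a dense (countable union of hyperplanes) subset of $\Delta_\pi$, and for a non-degenerate permutation these are non-minimal, hence not uniquely ergodic. Consequently no nonempty open subset of $\Delta_\pi$ lies in $\mathcal{UE}_\pi$, so openness cannot be the mechanism of stability. The paper's actual mechanism is pointwise, not open: paths are built as limits of piecewise-linear approximations whose breakpoints are hand-picked uniquely ergodic IETs lying \emph{on} the fail planes of Rauzy induction, and unique ergodicity of every limit point is verified individually via Veech's criterion $\bigcap_n M(T,n)\Delta=\{T\}$, using contraction estimates for products of ``combining'' matrices along the forced Rauzy expansions. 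Each point of the resulting path is uniquely ergodic, but no neighborhood of any point is. Relatedly, ``interpolating toward a distinguished vertex of a deep cylinder'' along a straight segment will in general cross infinitely many fail planes and pick up non-uniquely-ergodic points; the paper instead reaches the boundary stratum by a convergent infinite concatenation of shrinking arcs (its Lemma on limit paths), each arc living inside a deep cylinder pushed forward by the Rauzy matrix.

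Two further gaps. First, your inductive step degenerates one length at a time and invokes the inductive hypothesis on the resulting $(n-1)$-IET, but the induced permutation on a boundary face need not be non-degenerate or even irreducible, so the hypothesis need not apply to the face you land on. The paper avoids this by jumping directly to $4$-element sub-tuples on which $\pi$ restricts to a permutation in the Rauzy class of $(4321)$ (``secret $4$-IETs''), and then proves a separate combinatorial proposition that any two such tuples are linked by a chain of ``accessible'' pairs; some analogue of that combinatorics is unavoidable and is absent from your sketch. Second, the base case $n=4$ is where essentially all of the analytic work lies (the building-block formalism, the finite- versus infinite-depth dichotomy, the explicit verification of combining and almost-positivity properties for a concrete finite set of Rauzy paths); asserting ``a direct construction'' defers rather than supplies the core of the proof.
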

For the formal definition of non-degenerate permutations see
Section~\ref{sec:iets}. Intuitively, non-degenerate means that no
induced map on a sub-interval has less than $n$ singularities.  In the
case of $n=2$, the space of $2$-IETs is equal to the space of
rotations; where such an IET is uniquely ergodic if and only if the rotation
angle is irrational, which is clearly a disconnected subset.

$3$-IETs can be thought of as the induced map (see Definition
\ref{defin:first ret}) of a rotation by ${\frac{1-L_1}{1+L_2}}$ on
$[0,\frac 1 {1+L_2})$ (compare \cite{KS}, \cite{keane}) and they are
uniquely ergodic if and only if $\frac{1-L_1}{1+L_2}$ is irrational.
So a set $\frac{1-L_1}{1+L_2}=y \in \mathbb{Q}$ gives a curve (or
plane if one does not normalize the lengths of an IET) of non-minimal
(and so not uniquely ergodic) IETs that disconnect the space of
$3$-IETs.

Thus the bound on $n$ in Theorem~\ref{thm:main} is optimal.

\subsection*{Outline of proof}
The proof of Theorem~\ref{thm:main} begins with a reduction to the
case of $n=4$. Namely, the space of $n$-IETs contains many copies 
of the space of $4$-IETs by setting the length of enough intervals to
be $0$ --  and in these subspaces we know that the uniquely ergodic
IETs are path-connected. Studying the combinatorics of a
non-degenerate permutation, we can show that the union $U$ of all these
subspaces is path-connected. Using a limiting argument we can then
show that each uniquely ergodic IET can be connected by a path to a
point in $U$.

\smallskip In the case of $n=4$ we are able to give an explicit
inductive procedure that constructs paths in the space of
uniquely ergodic IETs. Unique ergodicity of an IET $T$ can be detected by
the Rauzy induction of $T$. Thus ideally one would want to construct
paths $c$ where Rauzy induction has desirable properties for each $c(t)$.

A key problem however is that Rauzy induction is undefined on a
countable set of codimension $1$ planes in the simplex. Continuous
paths necessarily cross these ``fail planes'', and in general most
points on such a plane do not correspond to uniquely ergodic IETs.

This suggests the following naive strategy: start by joining two IETs
$S,T$ by a straight line segment $S\to T$. If this line intersects the
plane where Rauzy induction fails, select a uniquely ergodic point $R$
on that plane, and replace the line segment by a concatenation
$S\to R\to T$ of two segments. Now both $S\to R$ and $R\to T$ share
the same first Rauzy induction step.  We can continue this process
iteratively, always replacing straight segments by concatenations
after some number of Rauzy steps.

There are two main issues with this approach: why does the sequence of
approximate paths converge, and why are all points on it uniquely
ergodic?

We deal with both of these issues simultaneously by taking care
how to choose the intermediate IETs on the fail planes. More precisely
we want that limit points which do not eventually follow the Rauzy
expansion of one of the points on a fail plane to have infinitely many matrices
which define contracting maps on the simplex in their Rauzy
expansion. This will yield both
unique ergodicity and continuity at these limit points. For the
points on the fail planes, unique ergodicity is clear from the
construction, but continuity needs to be proved by a different
argument. 

\medskip Section~\ref{sec:iets} of the article describes some
necessary background on IETs and Rauzy induction, while
Section~\ref{sec:matrices} contains some basic results and notation on
matrices that is used throughout.

Section~\ref{sec:abstract} describes the basic mechanism used to
construct the paths in the case $n=4$ (depending on an explicit
construction done in Section~\ref{sec:concrete}) -- the necessary
results to show continuity at points on ``fail planes'' is developed
in Section~\ref{sec:finite-depth}, the results for other points is
developed in Section~\ref{sec:infinite-depth}. Finally,
Section~\ref{sec:connecting-building-blocks} contains the proof of
Theorem~\ref{thm:main} for $n=4$.  In Section~\ref{sec:n-iets} we
extends the results to $n$-IETs. 

\subsection*{Questions}
The work in this article suggests several possible directions of
further research. One possibility is to ask further topological
question about sets of specific IETs. In particular, we have
\begin{ques}[Mladen Bestvina]
  In the case of $n$-IETs for $n\geq 5$, does the set of uniquely
  ergodic $n$-IETs satisfy higher connectivity properties?
\end{ques}
\begin{ques}
  Is the set of minimal $n$-IETs path connected?
\end{ques}
On the other hand, one could try to generalize the question of the
topology of the set of uniquely ergodic transformations from IETs to
other settings. One example is the following
\begin{ques} 
  Is there 1-parameter diagonal flow on $SL_4(\mathbb{R})/SL_4(\mathbb{Z})$
  so that the set of points whose orbit under this flow is bounded path
  connected?
\end{ques}
Another example is a question that inspired this work.
The space of measured foliations on a Riemann surface of genus $g$ is
homeomorphic to a sphere of dimension $6g-7$. The set of uniquely
ergodic laminations has full measure \cite{masur}, and one can ask
\begin{ques}\label{q:lams}
  For a surface of genus $g\geq 2$, is the set of uniquely ergodic
  laminations path-connected? 
\end{ques}
There is a closely related space $\mathcal{EL}$ (the space of ending
laminations) which has been studied extensively. Ending lamination
space is known to be connected and locally path-connected \cite{gab},
\cite{leinshleim}, and even has been identified explicitly in some
low-complexity cases \cite{henspryz}, \cite{gabai2}. However, the set
of uniquely ergodic foliations is only a subspace of ending lamination
space, and so these results do not yield any direct 
information about the topology of the set of uniquely ergodic
laminations. Somewhat in the line of our results is a result by
Leininger-Schleimer which ensures the existence of spheres in the set
of uniquely ergodic laminations \cite{leinshleim2}. To the authors'
knowledge Question~\ref{q:lams} is still open.

\smallskip
Once could also try to adapt the methods of the current paper to the
setting of surface foliations by building flat surfaces from IETs. 
The direct analog of the case $n=4$ (which all our work is inductively
based on) would concern foliations in the stratum
$\mathcal{H}(2)$ in genus $2$. However, in this stratum
every saddle connection defines a simple closed curve, and thus 
arational foliations are not path connected. Indeed in every Rauzy class there are 
permutations where the IETs corresponding to arational foliations are not path connected.
 However, one can still ask the following question:
\begin{ques}
For a suitable permutation $\pi$, is the set of all IETs
corresponding to arational uniquely ergodic foliations path-connected?
\end{ques}

\textbf{Acknowledgments}: We would like to thank Jayadev Athreya and
Howard Masur for inspiring discussions and interest in the
project. Furthermore, we express our gratitude for the numerous and
detailed comments by the anonymous referee, which helped to improve
the paper greatly.

\section{Interval Exchange Transformations and Rauzy Induction}
\label{sec:iets}
In this section we fix the notation for interval exchange
transformations (\emph{IETs}) that we will use, and also collect some
well known results that we will use throughout. We refer the reader to 
\cite{viana survey} for a detailed treatment.

\medskip
We denote a permutation $\pi$ on the symbols $1,\ldots, n$ by the list
containing the image of the 
symbols under the inverse permutation $\pi^{-1}$. That is, the
identity permutation on  
four symbols is denoted by $(1234)$, the transposition of the 
first two symbols is $(2134)$. The permutation which maps every symbol
to the next one (cyclically) would be $(4123)$. The reason for this
convention will be clear later.

\begin{defin} Given $\hat{L}=(l_1,l_2,...,l_d)$
where $l_i \geq 0$, we obtain $d$ sub-intervals of the
interval $\left[0,\underset{i=1}{\overset{d}{\sum}} l_i\right):$ $$I_1=[0,l_1) ,
I_2=[l_1,l_1+l_2),...,I_d=[l_1+...+l_{d-1},l_1+...+l_{d-1}+l_d).$$ Given
 a permutation $\pi$ on  the set $\{1,2,...,d\}$, we obtain a
 d-\emph{Interval Exchange Transformation} (IET)  
 $$T \colon
 \left[0,\underset{i=1}{\overset{d}{\sum}} l_i\right) \to 
 \left[0,\underset{i=1}{\overset{d}{\sum}} l_i\right)$$ 
 which exchanges the
 intervals $I_i$ according to $\pi$. That is, if $x \in I_j$
 then $$T(x)= x - \underset{k<j}{\sum} l_k
 +\underset{\pi(k')<\pi(j)}{\sum} l_{k'}.$$ 

 If such a $T$ is given, we denote its (unnormalized) length vector by
 $\hat{L}(T)$ and its permutation by $\pi(T)$. 
\end{defin}
With our convention, $\pi(T)$ is then described by the order of
intervals from left to right after applying the IET.  
If $T$ is an IET, we denote by
\[L(T) = \hat{L}(T)/|\hat{L}(T)|_1\]
the (normalized) length vector of $T$. We say that a IET is
\emph{normalized} if it is defined on the unit interval. Every IET can
be rescaled to a normalized one, and we will usually identify IETs
which just differ by such a rescaling without explicit mention.  When
we speak about a length vector we mean the normalized one, unless
specified explicitly.

Intuitively, a permutation on $n$ symbols is degenerate if any IET
with that permutation has either fewer than $n-1$ discontinuities or
an induced map of (see below for the definition) it has fewer than
$n-1$ discontinuities. The technical conditions are as follows (\cite[Section
3]{iet v})
\begin{defin}\label{def:degenerate}
  A permutation $\pi$ on $n$ symbols is \emph{degenerate} if there is some
  $j<n$ so that one of the following holds.
  \begin{enumerate}
  \item $\pi(j+1)=\pi(j)+1$
  \item $\pi(j)=n, \, \pi(j+1)=1$ and $\pi(1)=\pi(n)+1$
  \item $\pi(j+1)=1$ and $\pi(1)=\pi(j)+1$
  \item $\pi(j+1)=\pi(n)+1$ and $\pi(j)=n$
  \end{enumerate}
\end{defin}

\begin{defin}\label{defin:first ret} Let $T:[0,1) \to [0,1)$ be measurable and Lebesgue measure preserving and $A \subset [0,1)$. The \emph{induced map of $T$ on $A$} is 
$T_A:A \to A$ given by  
\[ T_A(x)=T^{n(x)}(x), \quad\quad n(x) = \min\{n>0:T^n(x)\in A\}.\] 
\end{defin}
Recall that the Poincar\'e recurrence theorem guarantees that $T_A$ is
defined almost everywhere in $A$.

Next, we briefly describe the most important points of \emph{Rauzy
  induction}, the renormalization method we will use throughout to
study IETs. Our treatment of Rauzy induction will be the same as in
\cite[Section 7]{gauss measures}.

Let $T$ be a $n$-IET with permutation $\pi$. Let $\delta_+$ be the
rightmost discontinuity of $T$ and $\delta_-$ be the rightmost
discontinuity of $T^{-1}$. Let
$\delta_{max}=\max\{\delta_+,\delta_-\}$. Consider the induced map of
$T$ on $[0,\delta_{\max})$ denoted $T|_{[0,\delta_{\max})}$. The result is
again an IET, perhaps with a different permutation. 
We can renormalize it so that it is once again a $n$-IET on
$[0,1)$. That is, let $R(T)(x)= \frac{1}{\delta_{\max}}T|_{[0,\delta_{\max})}(x\delta_{\max})$. This is the Rauzy induction of $T$.

If $\delta_+ \neq \delta_-$ then the permutation and length vector of the IET $R(T)$ can be combinatorially determined. 

Namely, if $\delta_{max}= \delta_+$ we say the first step in Rauzy induction is
$a$. In this case the permutation of $R(T)$ is given by
\begin{equation*} \pi'(j)= \begin{cases}
    \pi (j) & \quad j \leq \pi^{-1}(n)\\ \pi(n) & \quad j=\pi^{-1}(n)+1 \\ \pi(j-1) & \quad \text{otherwise}
  \end{cases}.
\end{equation*}
We keep track of what has happened to the interval lengths under Rauzy induction by a matrix
$M(T,1)$ where  
\begin{equation*} M(T,1)[ij]= \begin{cases} \delta_{i,j} & \quad j \leq \pi^{-1}(n)\\
 \delta_{i, j-1} & \quad j>\pi^{-1}(n) \text{ and } i \neq n\\
\delta_{\pi^{-1}(n)+1,j} & \quad i=n \end{cases}. 
\end{equation*}
If $\delta_{max}= \delta_-$ we say the first step in Rauzy induction is $b$.
In this case the permutation of $R(T)$ is given by 
\begin{equation*} \pi'(j)= \begin{cases}
 \pi (j) & \quad \pi(j) \leq \pi(n)\\ \pi(j)+1 & \quad \pi(n) < \pi(j) < n \\ \pi(n)+1 & \quad \pi (j)=n
\end{cases}.
\end{equation*}
Again, we keep track of what has happened to the interval lengths under Rauzy induction by a matrix \begin{equation*}M(T,1)[ij]= \begin{cases} 1 & \quad i=n \text{ and }j= \pi^{-1}(n) \\ \delta_{i,j} & \quad \text{ otherwise} \end{cases}. 
\end{equation*}
In the case where $\delta_+ = \delta_-$ both the formulas in case a
and case b allow to identify $R(T)$ with an IET. We write $R_a(T)$ and
$R_b(T)$ for these choices.

The change in permutation under Rauzy induction can be depicted in a \emph{Rauzy
  diagram}. Below is the diagram for $n=4$. 
\begin{figure}[h]\label{fig:rauzy-diagram}
\begin{tikzpicture}
\path[dotted,->]  (-.6,0) edge (-2.5, 1.7)
node[left=1.4 cm, above=1.4 cm, text width= 3cm]{(2431)};
\path[dotted,->] (-2.9, 1.4) edge (-2.9,.4);
\path[dotted,->] (-2.3,0) edge (-.9,0)
node[right=.35 cm, above=-.2 cm, text width=3cm]{(3241)};
\path[->] (-3.5,.17) edge [loop left] (-4,.15);
\path[->](-3.5,1.6) edge[bend left] (-5,1.6);
\path[->](-5,1.7) edge[bend left] (-3.5,1.7)
node[left= -.35 cm, above=-.35 cm, text width=3cm]{(2413)};
\path[dotted,->] (-6.2,1.6) edge[loop left](-6.2,1.6);
\path[->] (2,0) edge(.2,0);
\path[->](2.5,1.7) edge (2.5,.5)
(2.5,.5) node[ right= -1.9 cm, above=-.7 cm, text width=3 cm] {(4321)}
node[ right=1.2 cm, above=-.7 cm, text width=3cm]{(4213)}
node[ right=1.2 cm, above= 1.1 cm, text width=3cm]{(4132)};
\path[dotted,->] (3.2,.1) edge[loop right] (2.3,-.7);
\path[->](.2,.2) edge (2.2,1.9);
\path[dotted, ->] (3.2,2.0) edge[bend left] (4.5,2.0) 
node[ right=3 cm, above=-.3 cm, text width=3cm]{(3142)};
\path[dotted,->]  (4.5,1.9) edge[ bend left] (3.2,1.9);
\path[->](5.7,2.0) edge[loop right] (6,2);
\end{tikzpicture}
\caption{The Rauzy diagram for $n=4$.}
\end{figure}
 
The matrices described above depend on whether the step is $a$ or $b$
and the permutation $\pi(T)$. The following well known lemmas which are
immediate calculations help motivate the definition of $M(T,1)$. 
\begin{lem} \label{one step} If $R(T)=S$  then the length
  vector $L(T)$ is a scalar multiple of $M(T,1)L(S)$. 
\end{lem}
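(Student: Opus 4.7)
The plan is to verify the identity by directly computing the length vector of the induced IET in each of the two Rauzy types and comparing to the action of $M(T,1)$. Both cases are unpacked identically: we describe the sub-intervals into which $[0,\delta_{\max})$ is partitioned by $T|_{[0,\delta_{\max})}$, express their (unrenormalized) lengths in terms of the $l_i$, and check column by column that $M(T,1)$ carries these new lengths back to the old length vector $\hat L(T)$. Renormalization then turns the equality into a proportionality, since the total length of the induced IET is $\delta_{\max}$ while that of $T$ is $|\hat L(T)|_1$.

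Concretely, in case $a$ we have $\delta_{\max}=\delta_+=|\hat L(T)|_1-l_n$ and, since $\delta_+>\delta_-$, the image of $I_{\pi^{-1}(n)}$ under $T$ straddles the cut point $\delta_+$. So the induced map splits $I_{\pi^{-1}(n)}$ at the pre-image of $\delta_+$ into a left piece of length $l_{\pi^{-1}(n)}-l_n$ and a right piece of length $l_n$ (on which the induced map is $T^2$), while all other intervals $I_i$ with $i\ne n$ survive intact. Ordering these $n$ sub-intervals from left to right, one reads off the unrenormalized length vector $\hat L'$ of $R(T)$:
\begin{equation*}
\hat L'_i=
\begin{cases}
l_i & i<\pi^{-1}(n),\\
l_{\pi^{-1}(n)}-l_n & i=\pi^{-1}(n),\\
l_n & i=\pi^{-1}(n)+1,\\
l_{i-1} & i>\pi^{-1}(n)+1.
\end{cases}
\end{equation*}
A direct check using the definition of $M(T,1)$ shows $M(T,1)\hat L'=\hat L(T)$: rows $i<\pi^{-1}(n)$ and rows $\pi^{-1}(n)<i<n$ just pick off a single entry of $\hat L'$; row $\pi^{-1}(n)$ sums the entries at positions $\pi^{-1}(n)$ and $\pi^{-1}(n)+1$, giving back $l_{\pi^{-1}(n)}$; and row $n$ picks off the entry at position $\pi^{-1}(n)+1$, giving back $l_n$. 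Dividing both sides by $|\hat L(T)|_1$ and absorbing the factor $\delta_+/|\hat L(T)|_1$ on the right yields $L(T)\propto M(T,1)L(S)$.

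Case $b$ is entirely analogous, with the roles of $T$ and $T^{-1}$ reversed: now $\delta_{\max}=\delta_-=|\hat L(T)|_1-l_{\pi^{-1}(n)}$, and what is removed from the range of $T$ is the last image interval, which pulls back to force a splitting of $I_n$ into a left piece of length $l_n-l_{\pi^{-1}(n)}$ and a right piece of length $l_{\pi^{-1}(n)}$. The right piece then participates in the induced map via $T^2$, but sits at the extreme right of the domain $[0,\delta_-)$. The corresponding $M(T,1)$ in this case is the identity plus a single off-diagonal $1$ in position $(n,\pi^{-1}(n))$, which precisely encodes that the $\pi^{-1}(n)$-th length of $S$ must be augmented by the $n$-th length of $S$ to recover $l_{\pi^{-1}(n)}$.

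There is no real obstacle here; the content is combinatorial bookkeeping. The only mildly delicate point is ensuring that one orders the new sub-intervals correctly in the domain, so that the indices line up with the matrix conventions; once this is done the verification is a one-line check for each of the four types of rows.
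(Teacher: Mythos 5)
The paper offers no proof of this lemma, dismissing it as a well-known immediate calculation, and your direct verification — computing the induced partition of $[0,\delta_{\max})$ in each Rauzy type and checking row by row that $M(T,1)$ carries the new lengths back to $\hat L(T)$ — is exactly the intended argument; case $a$ is carried out correctly. Two small slips in your case-$b$ gloss: the right piece of $I_n$ (of length $l_{\pi^{-1}(n)}$) is precisely the part \emph{removed} from the domain, i.e.\ it lies outside $[0,\delta_-)$ rather than at its extreme right (the interval on which the induced map is $T^2$ is $I_{\pi^{-1}(n)}$); and the off-diagonal entry in position $(n,\pi^{-1}(n))$ encodes that the $n$-th length of $T$ equals the $n$-th length of $S$ augmented by the $\pi^{-1}(n)$-th length of $S$, recovering $l_n=(l_n-l_{\pi^{-1}(n)})+l_{\pi^{-1}(n)}$ — not the transposed statement you wrote. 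Neither slip affects the validity of the argument, since the matrix itself is correctly identified.
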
 
We will denote the positive orthant by
\[ \RR^d_+ = \left\{ (x_1,\ldots,x_d)\in\RR^d | x_i>0 \,\mbox{for all}\, i\right\} \]
and the open Euclidean simplex by
\[ \mathring{\Delta}_d = \left\{ (x_1,\ldots,x_d)\in\RR_+^d, \sum_{i=1}^d x_i = 1  \right\}. \]
We let $M{\Delta}=M\mathbb{R}_d^+ \cap \mathring{\Delta}_d$. 

\begin{lem}\label{region} 
  Let $T$ be some IET. An IET $S$ with permutation $\pi(S)=\pi(T)$ and
  whose length vector $L(S)$ is contained in $M(T,1)\Delta$ has the
  same first step of Rauzy induction as $T$.
\end{lem}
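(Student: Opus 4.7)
My plan is to verify this by direct computation, splitting into the two cases depending on whether the first Rauzy step of $T$ is of type ``a'' or ``b''. First I would translate the type of step into a simple inequality between two entries of the length vector: since the rightmost discontinuity of $T$ is $\delta_+ = \sum_{i<n} l_i$ (total length minus $l_n$) and the rightmost discontinuity of $T^{-1}$ is $\delta_- = \sum_{\pi(i)<n} l_i$ (total length minus $l_{\pi^{-1}(n)}$), step ``a'' holds iff $l_n(T) < l_{\pi^{-1}(n)}(T)$, and step ``b'' iff the reverse strict inequality holds. Consequently the lemma reduces to showing that this inequality is preserved under the linear map $v \mapsto M(T,1)v$ for every $v$ in the open simplex, since $L(S)$ is, by hypothesis, a positive scalar multiple of such an image.

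For type ``a'', I would read off the columns of $M(T,1)$ from the piecewise definition: columns with index $j \le \pi^{-1}(n)$ are standard basis vectors $e_j$; the column with index $\pi^{-1}(n)+1$ equals $e_{\pi^{-1}(n)} + e_n$; the remaining columns are $e_{j-1}$. Expanding $M(T,1)v$ then gives
\[ (M(T,1) v)_{\pi^{-1}(n)} = v_{\pi^{-1}(n)} + v_{\pi^{-1}(n)+1}, \qquad (M(T,1) v)_n = v_{\pi^{-1}(n)+1}, \]
whose difference is $v_{\pi^{-1}(n)} > 0$. After normalization the same strict inequality persists, so $l_n(S) < l_{\pi^{-1}(n)}(S)$, confirming type ``a'' for $S$. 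The type ``b'' case will be shorter: in that case $M(T,1)$ is the identity plus a single extra $1$ in position $(n, \pi^{-1}(n))$, so only the $n$-th coordinate of $M(T,1)v$ changes, becoming $v_n + v_{\pi^{-1}(n)}$, which exceeds $(M(T,1) v)_{\pi^{-1}(n)} = v_{\pi^{-1}(n)}$ by $v_n > 0$.

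The argument is essentially a bookkeeping exercise; the only delicate point is carefully managing the piecewise formula for $M(T,1)$ in type ``a'', where the columns split into three regimes relative to the pivot index $\pi^{-1}(n)$, and ensuring that the strict positivity of the coordinate of $v$ that witnesses the inequality survives (which it does precisely because $v$ lies in the open simplex). Beyond this indexing, no substantive obstacle arises.
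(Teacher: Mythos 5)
Your computation is correct and is precisely the ``immediate calculation'' that the paper omits: the type of the first Rauzy step is the sign of $l_{\pi^{-1}(n)}-l_n$, and reading off the columns of $M(T,1)$ in each case shows this sign is strictly preserved on $M(T,1)\mathbb{R}^n_+$. Nothing in your argument differs from the intended proof.
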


We define the $n^{\text{th}}$ matrix of Rauzy induction
by $$M(T,n)=M(T,n-1)M(R^{n-1}(T),1).$$  
 It follows from Lemma \ref{region} that for an IET with length vector
 in $M(T,n)\Delta$ and permutation $\pi$ the first $n$ steps of
 Rauzy induction agree with $T$. 

\medskip
The set of all normalized $n$-IETs with a given permutation $\pi$ can be
naturally identified with a simplex which we denote by $\Delta_\pi$.
Iterated Rauzy induction defines a partition of $\Delta_\pi$ into
smaller simplices in the following way. The subset of $\Delta_\pi$
corresponding to IETs on which Rauzy induction is undefined is a
codimension-$1$ simplex embedded in $\Delta_\pi$. In each of the
complementary full-measure simplices, the set where Rauzy induction is
defined once but not twice again is a union of two codimension-$1$
simplices.
We denote by $\mathcal{P}_k$ the full-measure partition of
$\Delta_\pi$ into the (open) simplices on which Rauzy induction is
defined for the first $k$ steps. A simplex in $\mathcal{P}_k$ consists
of all the IETs which follow the same first $k$ steps in the Rauzy
diagram under Rauzy induction.

The projective linear map defined by the matrix $M(T,k)$ maps the
standard simplex $\Delta_{\pi'}$ (where $\pi'$ is 
the permutation corresponding to the IET $R^k(T)$) to the 
simplex in $\mathcal{P}_k$ which contains $T$.

We will also need a criterion that ensures unique ergodicity of
IETs. A \emph{Rauzy path} is a finite or infinite path in a Rauzy diagram
$\mathcal{R}$. Associated to a Rauzy path is the product of matrices
describing the change on lengths of the intervals which we will call
the \emph{Rauzy matrix} of the path. The following is well-known.
\begin{thm}\label{thm:ue-criterion}(Veech \cite[page 225]{iet v})
  Suppose that $T$ is an IET where $R^n(T)$ is defined for all
  $n\geq 1$, and such that $\bigcap_{n\geq 1}M(T,n)\Delta =
  \{T\}$. Then $T$ is uniquely ergodic.
\end{thm}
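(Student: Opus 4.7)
The plan is to show that any $T$-invariant Borel probability measure $\mu$ must coincide with Lebesgue measure. The core observation is that the vector of $\mu$-masses of the intervals $I_1,\dots,I_n$ is forced to lie in every simplex $M(T,k)\Delta$, hence by hypothesis equals $L(T)$; the same analysis applied to the refining Rohlin tower partitions produced by Rauzy induction then pins $\mu$ down on the full Borel $\sigma$-algebra.

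The main computational step is an invariant-measure analogue of Lemma~\ref{one step}. After $n$ steps of Rauzy induction, $[0,1)$ decomposes as a disjoint union of Rohlin towers indexed by the sub-intervals of $R^n(T)|_{[0,\delta_n)}$: the $j$-th tower has base $B_j\subseteq [0,\delta_n)$ and $\sum_i M(T,n)_{ij}$ levels of the form $T^\ell B_j$, exactly $M(T,n)_{ij}$ of which sit inside $I_i$. By $T$-invariance $\mu$ assigns a common mass $m_j^{(n)} := \mu(B_j)$ to each level of the $j$-th tower, so summing levels within $I_i$ yields
\[ \mu(I_i) \;=\; \sum_j M(T,n)_{ij}\, m_j^{(n)}. \]
Writing $\vec{m} = (\mu(I_1),\dots,\mu(I_n))$, this reads $\vec{m} = M(T,n)\vec{m}^{(n)}$ with $\vec{m}^{(n)} \in \RR^n_+$, and after normalization $\vec{m}$ lies in $M(T,n)\Delta$ for every $n$. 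The hypothesis therefore forces $\vec{m} = L(T)$, so $\mu$ and Lebesgue already agree on the partition $\{I_1,\dots,I_n\}$.

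To upgrade this to $\mu = \mathrm{Leb}$, I would use that each Rauzy matrix has determinant $\pm 1$, so $M(T,n)$ is invertible; the identity $L(T) = M(T,n)\vec{m}^{(n)} = M(T,n)(\delta_n L(R^n(T)))$ (the Lebesgue case of the same formula) then forces $\vec{m}^{(n)} = \delta_n L(R^n(T))$. Hence $\mu$ and Lebesgue assign the same mass to each base $B_j$, and by invariance to every level $T^\ell B_j$, i.e.\ to every element of the Rohlin tower partition $\mathcal{Q}_n$ of $[0,1)$ at stage $n$. A standard Dynkin-class argument then yields $\mu = \mathrm{Leb}$ on the Borel $\sigma$-algebra, provided that $\bigcup_n \mathcal{Q}_n$ generates it.

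The main obstacle is this last density statement, equivalently that the mesh of $\mathcal{Q}_n$ tends to $0$. Since the levels have Lebesgue length $\delta_n L(R^n(T))_j$ with $\delta_n = \prod_{k<n}\delta_{\max}^{(k)}$, one needs the widths of every tower base to shrink uniformly in $j$. Intuitively, if some normalized column of $M(T,n)$ stayed bounded away from $L(T)$ as $n\to\infty$, one could extract a nontrivial accumulation point in $\bigcap_n M(T,n)\Delta$, contradicting the singleton hypothesis; turning this into a rigorous argument will require tracking which Rauzy letter occurs at each stage to ensure that no coordinate is starved of updates, and this combinatorial bookkeeping is where I expect the technical heart of the proof to live.
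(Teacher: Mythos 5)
The paper does not actually prove this theorem---it is quoted from Veech with a citation---so the only meaningful comparison is with the classical argument, and your proposal is essentially that argument: the Rohlin-tower identity $\vec m = M(T,n)\vec m^{(n)}$, the conclusion $\vec m = L(T)$ from the singleton intersection, the use of $\det M(T,n)=\pm 1$ to push the equality of $\mu$ and Lebesgue down to the tower bases, and the generating-partition step. One point you should make explicit: to place the normalization of $\vec m$ in the \emph{open} set $M(T,n)\Delta = M(T,n)\RR^d_+\cap\mathring{\Delta}_d$ you need $\vec m^{(n)}$ strictly positive. This follows because non-halting Rauzy induction implies Keane's infinite distinct orbit condition, hence minimality, hence every $T$-invariant probability measure charges every nonempty open interval; alternatively one can check (by a short convexity argument) that $\bigcap_n \overline{M(T,n)\Delta}$ is still the singleton $\{L(T)\}$ and work with closed cones throughout.

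The genuine gap is the one you flag at the end, and your heuristic for closing it aims at the wrong quantity. The mesh of $\mathcal{Q}_n$ is comparable to $\delta_n$ (every level has length at most $\delta_n$, and the widest base has length at least $\delta_n/d$), and from $1=\sum_j h_j^{(n)}\,\delta_n L(R^nT)_j$ with $h_j^{(n)}=|C_j(M(T,n))|$ the tower heights one gets $\delta_n \le 1/\min_j h_j^{(n)}$. So what you must show is that \emph{all column norms of $M(T,n)$ tend to infinity}. Proving that the normalized columns converge to $L(T)$, as your sketch proposes, does not give this: projective convergence is perfectly compatible with bounded norms, and if the heights stayed bounded the partitions $\mathcal{Q}_n$ would stabilize and the Dynkin-class step would fail. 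The missing link is supplied by integrality: a sequence of non-negative integer vectors whose normalizations converge to a point of $\mathring{\Delta}_d$ while their norms stay bounded is eventually constant, which would force $L(T)$ to be rational; but a rational length vector makes every orbit periodic, so $\delta_n$ would decrease by at least $1/q$ at each step ($q$ a common denominator) and Rauzy induction would halt, contradicting the hypothesis. (Equivalently, invoke the standard lemma that when Rauzy induction never halts every symbol wins infinitely often, so every column of $M(T,n)$ has another column added to it infinitely often.) With $\min_j h_j^{(n)}\to\infty$ one gets $\delta_n\to 0$, the partitions $\mathcal{Q}_n$ generate the Borel $\sigma$-algebra, and your argument is complete.
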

To check that the prerequisite of Theorem~\ref{thm:ue-criterion} is
satisfied, it suffices to check that the angle between the columns of
$M(T,n)$ converges to $0$, since $M(T,n)\Delta$ consists of convex
combinations of the columns of $M(T,n)$.
\subsection{Shadows of Rauzy paths} \label{sec:shadows}
We need to make the following consideration for our proof to hold for all uniquely ergodic 4-IETs, as opposed to just
 uniquely ergodic 4-IETs that have all powers of Rauzy induction defined. 
 
 Let $T$ be a minimal $d$-IET with the length of an interval equal to $0$. Let $\hat{T}$ be the minimal $d-1$-IET given by ``forgetting the interval of $T$ with length 0." For example if $L(T)=(\alpha,0,1-\alpha)$ and $\pi(T)=(321)$ then $L(\hat{T})=(\alpha,1-\alpha)$ and $\pi(\hat{T})=(21)$.
   Assume that $R^n(\hat{T})$ is defined for all $n$. 
  Let $T_\epsilon$ be the $d$-IET with $\pi(T_\epsilon)=\pi(T)$ and 
  \[L(T_\epsilon)= \begin{cases}L(T) & \text{ if }L(T)\neq 0\\
  \epsilon & \text{ else} \end{cases}.\]
  For all $\ell>0$ there exists $n$ (possibly 2 different $n$) and $\sigma_n \in \{1,...,d\}$ so that  
    \begin{itemize}
 \item $\underset{\epsilon \to 0}{\lim} \, L(R^nT_\epsilon)_i=L(R^\ell \hat{T})_{i-c_{i,\sigma_n}}$ for all $i$
\item For any $i,j\neq \sigma_n$ we have $\pi(R^n(T_\epsilon))(i)>\pi(R^n(T_\epsilon))(j)$ iff $\pi(R^\ell(\hat{T}))(i-c_{i,\sigma_n})>\pi(R^\ell(\hat{T}))(j-c_{j,\sigma_n})$ for all small enough $\epsilon$
  \end{itemize}
  where $c_{k,\ell}=\begin{cases}
  0 &\text{ if } i<\ell\\
  1 & \text{ if } i>\ell\\
  \text{ undefined} & \text{ if }i=\ell
  \end{cases}.$ 
  
 Given $n$, for all $\epsilon$ small enough $\epsilon$ we have $M(T_\epsilon, n)$ and $M(\hat{T},\ell)$ are related in the following way: after deleting the $\sigma_n^{\text{th}}$ column of $M(T_\epsilon,n)$ one is left with a $d \times (d-1)$ matrix with a row of all zeros, so that when it is deleted one has $M(T,\ell).$

 We prove the result by induction on $\ell$. We assume that this is true for $\ell$ with corresponding number $n$. We say $j$ is in \emph{critical position} if $j\in \{d, \pi^{-1}d\}$. The proof breaks into two parts, when $\sigma_n$ is not in critical position in which case the corresponding number for $\ell+1$ is $n+1$ and when $\sigma_n$ is in critical position in which case the corresponding number for $\ell+1$ is $n+2$.

\smallskip
\textbf{Case 1:}  $\sigma_n$ is not in critical position. In this case it is straightforward for $\ell+1$ and the corresponding number is $n+1$ with 
 \[\sigma_{n+1}=\begin{cases}
 \sigma_n & \text{ if }R (R^nT_\epsilon) \text{ is `b' or } R(R^nT_\epsilon) \text{ is `a' and }\sigma_n<\pi^{-1}(d)\\
 \sigma_n+1 & \text{ else}
 \end{cases}.\]

\smallskip
  \textbf{Case 2:} $\sigma_n$ is in a critical position. Let $j=d$ if $\sigma_n=\pi^{-1}(d)$ and $\pi^{-1}(d)$ if $\sigma_n=d.$ By Conclusion 1 (which we are inductively assuming), for all small enough $\epsilon$ we have that $L(T_\epsilon)_j>L(T_\epsilon)_{\sigma_n}=\epsilon$. Set 
  \[\sigma_{n+1}=\begin{cases}
 \pi^{-1}(d)+1 & \text{ if }\sigma_n=d\\
 \sigma_n & \text{ else}
 \end{cases}.\] Conclusion 1 follows inductively. Indeed, except for one entry $L(R^{n+1}T_\epsilon)$ is a permutation of $L(R^nT_\epsilon)$ and the exceptional entry is $L(R^nT_\epsilon)_j-\epsilon$. Conclusion 2 is straightforward to check, as is the claim on the matrices (the deleted column should be $\sigma_{n+1}$). Moreover $\sigma_{n+1}$ is not in the critical position so we may follow case 1 for $\ell+1$ which will correspond to $n+2$. To see that $\sigma_{n+1}$ is not in critical position, if $\sigma_n=d$ then $\pi^{-1}(d)\neq d-1$ be the minimality of $R^\ell\hat{T}$ (which follows from our assumptions on $\hat{T}$). The case of $\sigma_n=\pi^{-1}(d)$ is similar.
 
 This discussion has a bearing on minimal IETs with nonzero interval lengths that will have a failure of Rauzy induction. Indeed, assume $T$ is a minimal $d$-IET
  that has  $R^k$ defined on it, but $R^{k+1}$ is not defined on it. This means that the two intervals of $R^kT$ in critical position have the same length. One can formally continue Rauzy induction (by either $R_a$ or $R_b$) one step and have an IET, $S$ with the length of one entry 0. One also gets a matrix $M=M(T,k)M'$ where $M'$ is the matrix given by the formal step or Rauzy induction. We may now forget the entry of $S$ that has zero length and obtain a $(d-1)$-IET, $\hat{S}$. We may relate its path under Rauzy induction to $S_{\epsilon}$ as in the previous discussion. To recover an approximation to $T$ we just multiply the matrices and lengths we obtain for $S$ by $M$. This procedure can be successively iterated if there is a subsequent failure of Rauzy induction for $\hat{S}$.

\section{Matrix Terminology and Combining Matrices}
\label{sec:matrices}
In this section we will collect some notation and terminology on 
$(4\times 4)$--matrices with nonnegative integral entries. 

If $A$ is such a matrix, then we will denote by $C_i(A)$ the $i$-th
column of $A$. We will often call $i$ the \emph{index} of the column
in such a context.  If $P$ is some property a column of a matrix (or a
vector in general) may have, we will say that the \emph{index $i$ or
  column $i$ has $P$} if $C_i(A)$ has $P$, when the matrix $A$ is clear
from the context.

\smallskip
We use the $1$-norm for vectors throughout, so 
$|C_i(A)|$ will denote the sum of the entries in the $i$-th column.
We will often need to consider the column of $A$ with the largest sum
of entries. We denote this by $C_{max}(A)$. Should there be several
columns which all have the largest entry sum, we adopt the convention
that $C_{max}(A)$ is the column with smallest index realising this maximum.

\medskip
Recall that in a matrix product $AB$, the column $C_i(AB)$ is a sum
of the columns of $A$, with coefficients from $C_i(B)$:
\[ C_i(AB) = \sum_{j=1}^4 (C_i(B))_j C_j(A) \]
Thus, we say that \emph{$B$ adds column $i$ to column $j$ (or: column
  $i$ is added to column $j$)} if the $i$-th entry of $C_j(B)$ is
positive (usually, it will be the case that the $j$-th entry of
$C_j(B)$ is also positive, to make the terminology completely
justified, but we do not insist on this). Note that it is possible for
a column to add to itself.  Equivalently, we may say that \emph{column
  $j$ has column $i$ added to it}.  If there is any $j$ so that column
$i$ is added to column $j$, then we simply say that \emph{column $i$
  is added to another column}. Similarly, we simply say that
\emph{column $i$ has a column added to it} if there is a corresponding
$j$.

This terminology will be used in particular when considering powers of
some matrix $B$. 

\medskip Next, we introduce the central new notion of this
section. Combining matrices will be generalisations of positive
matrices, in the sense shown in Lemma~\ref{lem:powers-of-combining}
below.

\begin{defin} 
  A matrix $M$ is called \emph{combining} if there are two groups of columns,
  \emph{active} and \emph{passive} ones. We require that that 
  \begin{enumerate}
  \item There are at least $2$ active columns.
  \item At most one column of $M$ is neither active nor passive. This
    column is called \emph{idle} if it exists.
  \item The only columns that are added to other columns are the
    active ones.
  \item Every active column is added to all other active columns and
    each passive column has at least one active column added to it.
  \end{enumerate}
  A finite Rauzy path $P$ is said to be \emph{combining} if
  the corresponding matrix is combining.
\end{defin}
\begin{example}
  \begin{enumerate}
  \item Every positive matrix is combining so that every column is active.
  \item The matrix 
    $$\begin{pmatrix}
      1 & 1 & 1 & 1 \\
      0 & 1 & 0 & 0 \\
      0 & 0 & 1 & 0 \\
      1 & 2 & 2 & 2
    \end{pmatrix}$$
    is combining. Namely, columns 1 and 4 are active, while 2 and 3 are passive.
  \item The matrix 
    $$\begin{pmatrix}
      1 & 1 & 1 & 1 \\
      0 & 1 & 0 & 0 \\
      0 & 1 & 1 & 0 \\
      1 & 2 & 2 & 2
    \end{pmatrix}$$
    is \textbf{not} combining. Column 2 has Columns 1,3 and 4 added to
    it. Thus, all three of those would need to be active. However, column 4 
    does not have column 3 added to it, violating the definition of active.
  \item The matrix $$\begin{pmatrix}
      1 & 0 & 0 & 0 \\
      0 & 1 & 0 & 0 \\
      0 & 0 & 2 & 1 \\
      1 & 0 & 1 & 1
    \end{pmatrix}$$
    is combining, where columns 3 and 4 are active, 1 is passive and
    column 2 is idle.
  \end{enumerate}
\end{example}

\begin{thm*}(Perron-Frobenius) If $M$ is an $n \times n$
  matrix with all entries positive then there exists a unique largest
  (in absolute value) eigenvalue, called the Perron-Frobenius
  eigenvector. The corresponding eigenvector can be chosen to be
  positive (i.e. has all entries positive) and is called the
  Perron-Frobenius eigenvector. It is the only eigenvector with that
  property.
\end{thm*}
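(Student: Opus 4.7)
The plan is to follow the classical Brouwer fixed-point argument for existence, and then use a contraction-type argument to establish uniqueness and the strict dominance of the Perron-Frobenius eigenvalue.

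First I would establish existence of a positive eigenvector. Consider the continuous map $F:\mathring{\Delta}_n \to \mathring{\Delta}_n$ defined by $F(v) = Mv/|Mv|_1$. Since every entry of $M$ is strictly positive, $Mv$ has strictly positive entries for any $v$ in the closed simplex $\Delta_n$, so $F$ extends continuously to the closed simplex and sends it into the open interior $\mathring{\Delta}_n$. By Brouwer's fixed point theorem applied to this extension, $F$ has a fixed point $v_0 \in \mathring{\Delta}_n$, and the relation $Mv_0 = |Mv_0|_1 \cdot v_0$ exhibits $v_0$ as an eigenvector with positive entries and positive eigenvalue $\lambda_{PF}=|Mv_0|_1$.

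Next I would prove uniqueness of the positive eigenvector and strict dominance of $\lambda_{PF}$. Suppose $w$ is any (possibly complex) eigenvector with $Mw = \lambda w$. Writing $|w|$ for the componentwise absolute value vector, the triangle inequality gives $M|w| \geq |\lambda|\,|w|$ componentwise, with equality in coordinate $i$ only if all the entries of $w$ that get summed (with positive weights from row $i$ of $M$) have the same complex argument. Now pair this inequality with the positive eigenvector $v_0$: taking the inner product of both sides with the left Perron-Frobenius eigenvector (which exists by applying the existence argument to $M^T$, and has strictly positive entries) yields $\lambda_{PF}\langle u_0, |w|\rangle \geq |\lambda|\langle u_0,|w|\rangle$, so $|\lambda|\leq \lambda_{PF}$. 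Moreover, if equality holds then we must have $M|w|=\lambda_{PF}|w|$, so $|w|$ is a positive eigenvector.

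The main obstacle is upgrading this to strict inequality $|\lambda|<\lambda_{PF}$ whenever $\lambda\neq\lambda_{PF}$, and showing uniqueness of the positive eigenvector. For uniqueness, I would argue that if $v_1$ is another positive eigenvector with eigenvalue $\mu$, then applying $u_0^T$ to $Mv_1=\mu v_1$ yields $\mu = \lambda_{PF}$, and then considering $v_0 - tv_1$ for $t$ chosen so that this vector has a zero entry but remains nonnegative gives an eigenvector of eigenvalue $\lambda_{PF}$ which is nonnegative but not strictly positive; applying $M$ (whose entries are strictly positive) to any nonzero nonnegative vector produces a strictly positive vector, giving a contradiction unless $v_1$ is a multiple of $v_0$. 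For strict dominance, the equality case forces every entry of $w$ (weighted by the positive row entries of $M$) to have a common argument in every row, which, combined with all rows being strictly positive, forces all entries of $w$ to have a common argument $\theta$, so $e^{-i\theta}w$ is a positive eigenvector and hence $\lambda=\lambda_{PF}$ by the uniqueness just established.

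Finally, the simplicity (algebraic) of $\lambda_{PF}$ follows from a short argument with the adjugate or from the observation that $M$ restricted to any invariant complement of $\mathbb{R}v_0$ has spectral radius strictly less than $\lambda_{PF}$, which is routine once the previous steps are in place. This is a standard result so I would cite a reference (e.g., the discussion in the Viana survey already cited) rather than reproducing the full argument.
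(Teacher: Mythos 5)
The paper does not actually prove this statement: it is quoted as the classical Perron--Frobenius theorem and used as a black box (the only related content in the paper is the remark that such matrices contract the Hilbert projective metric, with a citation to Veech). So there is nothing in the paper to compare against line by line; your proposal stands on its own.

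On its own terms, your argument is correct and is the standard one: Brouwer's fixed point theorem applied to $v\mapsto Mv/|Mv|_1$ for existence of a positive eigenvector, and the Wielandt-style triangle-inequality argument $M|w|\geq |\lambda|\,|w|$ paired against a positive left eigenvector for dominance, with the equality case forcing a common argument on the entries of $w$ and hence reducing to the uniqueness of the positive eigenvector, which you correctly handle via the $v_0-tv_1$ trick. One small step you should make explicit: when you invoke the left Perron--Frobenius eigenvector $u_0$ of $M^T$, its eigenvalue is a priori $\lambda_{PF}(M^T)$, not $\lambda_{PF}(M)$; pairing $u_0$ against $v_0$ (i.e. $\mu\langle u_0,v_0\rangle=\langle M^Tu_0,v_0\rangle=\langle u_0,Mv_0\rangle=\lambda_{PF}\langle u_0,v_0\rangle$ with $\langle u_0,v_0\rangle>0$) shows the two coincide, after which your inequality $|\lambda|\leq\lambda_{PF}$ goes through. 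With that line added, the proof is complete; deferring algebraic simplicity to a reference is reasonable since the statement as quoted does not require it.
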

One can observe that matrices as in the previous theorem act as a
contraction in the so called Hilbert projective metric and we obtain
the following result (see e.g. \cite[page 240]{iet v}).
\begin{prop}\label{prop:hilbert}
  If $M$ is an $n \times n$ matrix with all entries positive then any
  non-negative (non-zero) vector $v$ has that $M^nv$ converges
  exponentially quickly to the direction of the Perron-Frobenius
  eigenvector.
\end{prop}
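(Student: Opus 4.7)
The plan is to reduce the claim to Birkhoff's theorem on contraction of the Hilbert projective metric. For $u,v \in \RR^n_+$ define
\[ d_H(u,v) = \log\Big(\max_{i,j}\frac{u_iv_j}{u_jv_i}\Big). \]
This is a pseudo-metric on $\RR^n_+$ which descends to a genuine metric on rays in the interior of the cone, and it vanishes exactly when $u$ and $v$ are scalar multiples of one another. First I would note that since $M$ has all entries positive, $Mv$ is strictly positive for any non-negative non-zero $v$, so we may assume $v$ lies in the interior of the cone from the start.

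Next, I would invoke Birkhoff's theorem, which states that a matrix with strictly positive entries contracts $d_H$ by a ratio $\tau(M) < 1$; explicitly, $\tau(M) = \tanh(\Delta/4)$, where $\Delta$ is the $d_H$-diameter of $M\RR^n_+$ inside $\RR^n_+$. Finiteness of $\Delta$ is the reason one needs strict positivity of $M$: strict positivity forces the image cone to lie in the interior of the domain cone, and the Hilbert diameter of a set contained in the interior of the cone is finite. This is the statement alluded to in the reference cited just before the proposition, and is the main technical input of the argument; proving the $\tanh$--estimate itself reduces to a $2\times 2$ analysis of the projective cross-ratio along the worst-case pair of indices, which I would only sketch rather than compute.

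Granting Birkhoff, the conclusion follows quickly. By Perron–Frobenius (stated as the theorem just before the proposition), there is a positive eigenvector $v_{PF}$ with $Mv_{PF} = \lambda v_{PF}$, so $v_{PF}$ is a projective fixed point of $M$. Iterating the contraction gives
\[ d_H(M^k v, v_{PF}) = d_H(M^k v, M^k v_{PF}) \leq \tau(M)^k \, d_H(v, v_{PF}), \]
so $M^k v$ converges to the ray through $v_{PF}$ exponentially in $d_H$. Finally, to pass from Hilbert to a more familiar notion of convergence of directions, I would use that on any $d_H$-bounded subset of the interior of $\RR^n_+$, the Hilbert metric dominates the Euclidean angle up to a multiplicative constant (this follows from smoothness of $d_H$ away from the boundary of the cone). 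Applied to the orbit $\{M^k v\}_{k \geq 1}$, which after the first step lies in the fixed compact set $M\mathring{\Delta}_n$ of finite $d_H$-diameter, this converts exponential Hilbert decay into exponential decay of the angle between $M^k v$ and $v_{PF}$, which is the desired conclusion.

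The only genuine obstacle is the contraction estimate of Birkhoff; everything else is a bookkeeping translation between the Hilbert metric and ordinary projective convergence. Since the paper already cites this estimate, I would present the proof in the form above and cite \cite[page 240]{iet v} at the point Birkhoff's contraction is invoked.
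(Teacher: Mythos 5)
Your argument is correct and is precisely the route the paper itself takes: the paper does not prove this proposition but derives it from the observation that positive matrices contract the Hilbert projective metric, citing the same source (Veech, p.~240) for that contraction. Your additional bookkeeping (the Perron--Frobenius ray as the projective fixed point, and the comparability of the Hilbert metric with the angular metric on subsets of finite Hilbert diameter) is sound and fills in exactly what the paper leaves implicit.
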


We conclude with a lemma generalizing the Perron-Frobenius theorem to the 
case of combining matrices.
 It will be used frequently in the sequel.

In its formulation, we will call the $j$--th entry $(C_i(A))_j$ of a column
\emph{active} (or passive, or idle), if the corresponding index $j$ has this
property (i.e. the column $C_j(A)$ is active, passive, or idle).
\begin{lem}\label{lem:powers-of-combining}
  Let $A$ be a combining matrix. Then there are numbers $E,\gamma'>1$ with the
  following properties: let $n>3$ be any integer, and let $i,j$ be
  indices corresponding to active or passive columns.
  \begin{enumerate}[i)]
  \item $|C_i(A^n)| / |C_j(A^n)| \leq E$.
  \item Any two of the active entries in columns
    $i$ or $j$ of $A^n$ differ by a factor of at most $E$.
  \item We have
    \[ \sin\angle(C_i(A), C_j(A)) \leq \frac{E}{\gamma'^n} \]
  \end{enumerate}
\end{lem}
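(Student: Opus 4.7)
The plan is to exploit the block upper-triangular structure forced by the defining properties of a combining matrix. Reordering indices so that active columns come first, then passive, then the at-most-one idle column, the requirement that only active columns are added to other columns forces
\[ A = \begin{pmatrix} A_{AA} & A_{AP} & A_{AQ} \\ 0 & D_P & 0 \\ 0 & 0 & d_Q \end{pmatrix}, \]
where $A_{AA}$ has strictly positive off-diagonal entries (because every active column is added to every other active column), $D_P$ is diagonal on passive indices, $d_Q$ is the scalar idle diagonal, and every column of $A_{AP}$ is nonzero. The zero blocks below follow because passive and idle columns never contribute to a distinct column.

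With this in place, the heart of the proof is the Perron--Frobenius theorem applied to $A_{AA}$. Having at least two active indices and strictly positive off-diagonal entries makes $A_{AA}$ irreducible, and for $n>3$ one checks primitivity from the short-cycle structure of its associated graph, so $A_{AA}^n$ is strictly positive. Proposition~\ref{prop:hilbert} then supplies a Perron eigenvector $v_A$ and constants $K,\gamma'>1$ with
\[ \sin\angle\bigl(A_{AA}^n w,\, v_A\bigr) \leq K/\gamma'^{\,n} \]
for every nonnegative nonzero $w$, along with column sums and positive entries of $A_{AA}^n$ all within bounded multiplicative factors. Since by the block form $C_i(A^n) = (A_{AA}^n e_i,0,0)^T$ for each active $i$, all three conclusions (reading (iii) as $\sin\angle(C_i(A^n),C_j(A^n))$) follow at once for active--active pairs after choosing $E$ to absorb the Perron constants.

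For a passive column $C_j(A^n)$, explicit expansion yields active-row entries $\sum_{k=0}^{n-1} A_{AA}^k (A_{AP}e_j)\,(D_P)_{jj}^{\,n-1-k}$, a single passive entry $(D_P)_{jj}^{\,n}$ at row $j$, and zeros elsewhere. Each summand in the active part normalizes exponentially fast to $v_A$ by the Hilbert metric contraction of Proposition~\ref{prop:hilbert}, so the whole active portion aligns with $v_A$ and, once the Perron eigenvalue $\lambda$ of $A_{AA}$ dominates $(D_P)_{jj}$, exponentially swamps the passive self-term. The bounds (i)--(iii) for mixed and passive--passive pairs then follow by the triangle inequality on angles together with Perron--Frobenius norm comparisons. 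The main obstacle I expect is precisely this comparison of $(D_P)_{jj}$ with $\lambda$: the combinatorial provenance of combining matrices as products of Rauzy induction matrices in this paper should guarantee the required domination, and carefully tracking these estimates against the worst-case passive diagonal is where most of the bookkeeping lives.
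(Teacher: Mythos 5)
Your proposal is essentially the paper's own argument: the paper likewise restricts $A$ to the invariant subspace spanned by the active coordinates, applies Perron--Frobenius (via Proposition~\ref{prop:hilbert}) to the resulting positive block $\hat{A}=A_{AA}$, and expands a passive column of $A^n$ as a sum $\sum_i v_i$ of images of active columns plus the residual self-term, exactly as in your explicit formula $\sum_{k=0}^{n-1}A_{AA}^k(A_{AP}e_j)(D_P)_{jj}^{n-1-k}+(D_P)_{jj}^n e_j$. The one step you leave open --- comparing $(D_P)_{jj}$ with the Perron eigenvalue $\mu$ of $A_{AA}$ --- is genuinely needed: if some passive diagonal entry satisfied $(D_P)_{jj}\geq\mu$, the self-term $(D_P)_{jj}^n e_j$ would not be swamped and conclusion~(iii) would fail (one can write down nonnegative matrices satisfying the combining axioms for which this happens), and the paper's proof silently assumes this away when it asserts $\|v_i\|\asymp\mu^i$ with constants independent of $n$. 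Your instinct that the Rauzy provenance rescues this is correct and easy to make precise: the matrices to which the lemma is applied are unimodular products of Rauzy matrices, and a passive or idle row has all off-diagonal entries zero, so expanding the determinant along that row forces the diagonal entry to be $1$, while $\mu\geq 2$ since $\hat{A}$ is a positive integer matrix of size at least $2$. A second, smaller caveat of the same flavour: your primitivity claim for $A_{AA}$ (and the paper's assertion that $\hat{A}$ is positive) uses that active diagonal entries are nonzero, which the abstract definition does not guarantee when there are exactly two active columns, but which again holds for all the matrices actually produced in Section~\ref{sec:concrete}.
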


The proof requires the following standard lemma.
\begin{lem}[Law of Sines]\label{lem:add vectors} Let $v, w \in \mathbb{R}^n,$ $$| \sin\angle(v+w,w)|
  |= \frac{\|v\|}{\|v+w\|} |\sin \angle(v,w)|.$$\end{lem}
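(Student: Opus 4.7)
The statement is a direct consequence of the classical law of sines applied to the triangle formed by the origin, $w$, and $v+w$. The plan is to set up this triangle, identify its sides and angles, and read off the identity.

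Concretely, I would consider the (possibly degenerate) triangle $\Delta$ with vertices $O = 0$, $P = w$, and $Q = v+w$. Its side lengths are $|OP| = \|w\|$, $|PQ| = \|v\|$, and $|OQ| = \|v+w\|$. The interior angle at $O$ is $\angle(w, v+w)$, and the interior angle at $P$ is the angle between the vectors $O - P = -w$ and $Q - P = v$, which equals $\pi - \angle(v,w)$; in particular its sine is $|\sin\angle(v,w)|$. Applying the ordinary planar law of sines to the two-dimensional subspace spanned by $v$ and $w$ (which contains $\Delta$) gives
\[
\frac{|PQ|}{\sin(\text{angle at }O)} \;=\; \frac{|OQ|}{\sin(\text{angle at }P)},
\]
that is,
\[
\frac{\|v\|}{|\sin\angle(w,v+w)|} \;=\; \frac{\|v+w\|}{|\sin\angle(v,w)|},
\]
and solving for $|\sin\angle(v+w,w)| = |\sin\angle(w,v+w)|$ yields the claimed formula.

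The only subtlety is degenerate configurations: if $v$, $w$, or $v+w$ vanishes, or if $v$ and $w$ are (anti)parallel, the triangle collapses and one or both sides of the identity are $0$ (or involve undefined angles). In all such cases I would check directly that both sides vanish, so the identity is usually stated under the implicit assumption that $w \neq 0$ and $v+w \neq 0$. Since the lemma is purely planar after restricting to $\mathrm{span}(v,w)$, there is no real obstacle in the proof; the only thing to be careful about is correctly identifying the angle at $P$ as the supplement of $\angle(v,w)$ so that sines agree.
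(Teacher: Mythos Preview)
Your proof is correct; the paper itself does not prove this lemma at all but merely states it as a ``standard lemma'' (the Law of Sines), so your triangle argument with vertices $0$, $w$, $v+w$ is exactly the intended justification. Your remark about degenerate cases is also appropriate, since the paper implicitly assumes the relevant vectors are nonzero when applying the lemma.
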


\begin{proof}[Proof of Lemma~\ref{lem:powers-of-combining}]
  Suppose that $\tau\leq 4$ columns are active, and let
  $Y\subset\RR^4$ be the $\tau$-dimensional subspace which is
  preserved by $A$ and so that the matrix $\hat{A}$ describing the
  action of $A$ on $Y$ is positive. Thus by the Perron-Frobenius
  theorem there is a unique positive eigenvector $\hat{w}\in Y$ of
  $\hat{A}$, which has positive eigenvalue $\mu$ that is the largest
  eigenvalue in absolute value. If $\tau<4$, consider the action of
  $\hat{A}$ on the invariant subspace of dimension $\tau-1$ which does
  not contain $\hat{w}$, $\tilde{A}$.  There exists $\gamma<\mu$ and
  $C$ so that $\|\tilde{A}^k\|_{op}\leq C\gamma^k$.

\begin{enumerate}[i)]

\item 
  By the Proposition~\ref{prop:hilbert} it follows that the projection
  of the active columns to the $\tau$ dimensional subspace in the
  previous paragraph does not lie in the $\hat{A}$ invariant $\tau-1$
  dimensional subspace complemented to the Perron-Frobenius
  eigenvector.  So the size of these columns grow proportionally to the
  Perron-Frobenius eigenvalue of $\hat{A}$. The passive columns get
  some multiples of active columns added to them. These active columns
  are growing exponentially (according to the Perron-Frobenius
  eigenvalue) and so the column is proportional to the last summand
  once $n$ is large enough. For small $n$ proportionality also holds
  simply by finiteness. If we consider a passive column $v$ of $A^n$
  it has the form $\sum_{j=1}^{n-1}w_j$ where the $w_j$ are active
  columns of $A^j$. Since the active columns are growing
  exponentially, $w_{n-1}$ is proportional to the largest active
  column of
  $A^n$.

\item This is true for powers of the $\tau$-by-$\tau$ matrix $\hat{A}$.
Indeed by Proposition~\ref{prop:hilbert} the ratio of
the entries converges to the ratio of the entries of the
Perron-Frobenius eigenvector. Arguing as in $(i)$ the passive 
column(s) are sums of vectors with this property, so they inherit it. 

\item 

Arguing as before, by Proposition \ref{prop:hilbert} the active columns of $A$ converge
exponentially fast to $\hat{w}$ under taking powers, and the angle
between the active columns decreases as required.

Consider next a passive column of $A^n$. This column has the form
$\sum_{i=0}^nv_i$, where $v_0$ is the initial passive column of $A$,
and each $v_i$ is a linear combination of active columns of $A^i$. In
particular, each $v_i$ has norm between $D_1\mu^i$ and $D_2\mu^i$
where $\mu$ is the Perron-Frobenius eigenvalue and $D_1,D_2$ depend on
the matrix $A$.

Next, note that
\[ \sin\angle\left(\sum_{i=0}^nv_i, \hat{w}\right)
\leq \sin\angle\left(\sum_{i=0}^n v_i, \sum_{i=n/2}^n v_i\right) +
\sin\angle\left(\sum_{i=n/2}^n v_i,\hat{w}\right) \]
since (absolute values of) angles satisfy the triangle inequality
and $\sin(\theta+\phi)\leq \sin(\theta)+\sin(\phi)$ for all
$0\leq \theta,\phi$ with $\theta,\phi<\frac \pi 2$.

Since
$\frac{\sum_{i=1}^{\frac n 2} D_2\mu^i}{\sum_{i=\frac n 2}^nD_1\mu^i}$
decays exponentially in $n$, Lemma~\ref{lem:add vectors} implies that
the first summand decays exponentially as well.

On the other hand, as $\sum_{i=n/2}^n v_i$ is a positive linear
combination of the active columns of $A^{n/2}$, the second summand
decays exponentially as well, by the previous comments.
\end{enumerate}
\end{proof}

\section{Building blocks}
\label{sec:abstract}
This section sets up most of the novel terminology used to construct the paths
necessary to prove the following
\begin{thm}\label{thm:main-for-4}
  The set of uniquely ergodic $4$-IETs with permutation in the Rauzy
  class of $(4321)$ is path-connected.
\end{thm}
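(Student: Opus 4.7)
The plan is to implement the iterative scheme described in the outline. Given two uniquely ergodic $4$-IETs $S$ and $T$ (I may first reduce to the case $\pi(S)=\pi(T)$ by separately handling paths that switch permutations along Rauzy neighbors), I start with the straight line segment $S \to T$ inside the simplex $\Delta_\pi$. Wherever this segment crosses a codimension-$1$ ``fail plane'' where $\delta_+ = \delta_-$, I pick an intermediate IET $R$ on that plane and replace the piece by a two-segment broken line $S \to R \to T$. By construction the two new segments lie in Rauzy cells sharing the same first Rauzy step. I then iterate the procedure on each new segment, cutting at the fail planes that arise after performing additional Rauzy steps, always picking a carefully chosen uniquely ergodic IET on the fail plane. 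This produces a sequence of piecewise-linear paths $\gamma_k$, and the aim is to show that they converge uniformly to a continuous path $\gamma_\infty$ consisting entirely of uniquely ergodic IETs.

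The critical freedom is in the choice of the intermediate points $R$. The idea is to select them so that for each piece of $\gamma_k$, the product matrix $M(T,n)$ realized along that piece of the Rauzy expansion picks up a combining matrix factor (from Section~\ref{sec:matrices}) within a controlled number of additional steps. By Lemma~\ref{lem:powers-of-combining}(iii), once a combining matrix factor appears, the columns of the accumulated matrix become nearly parallel, so the image $M\mathring{\Delta}$ of the simplex has small diameter (i.e. the Hilbert-metric diameter contracts exponentially). Iterating, for any limit point $x$ of $\gamma_\infty$ that is not one of the prescribed intermediate points, the sequence of nested simplices $M(T,n)\Delta$ containing $x$ accumulates infinitely many combining factors, hence has diameter tending to $0$. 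Theorem~\ref{thm:ue-criterion} then yields unique ergodicity of $x$, and the fact that nearby points on $\gamma_\infty$ share long prefixes of the Rauzy expansion with $x$ produces continuity of $\gamma_\infty$ at $x$.

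The main obstacle is continuity at the intermediate points $R$ themselves, i.e.\ at points that lie on fail planes. Unique ergodicity at such points is trivial since we chose them to be uniquely ergodic, but continuity is not automatic: these $R$ sit on the boundary between different Rauzy cells, so they are not approached from inside a single contracting cell. To handle this one must examine approximants from both sides of the fail plane simultaneously and show that both one-sided limits in parameter equal $R$. My plan is to exploit the shadow-of-Rauzy-paths machinery of Section~\ref{sec:shadows}, which describes how Rauzy expansions behave as an IET degenerates to one with a zero-length interval (which is exactly what happens when $R$ sits on a fail plane and we formally continue). This lets one relate the long-term behavior of approximating paths on either side of $R$ to a single lower-dimensional Rauzy expansion, forcing agreement and hence continuity.

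Finally, to make this program actually produce combining factors often enough, I will need an explicit construction at the level of the $4$-IET Rauzy diagram in Figure~\ref{fig:rauzy-diagram}: for each permutation and each fail plane one must prescribe a canonical choice of intermediate point $R$ and verify that after a bounded number of further Rauzy steps a combining matrix factor appears along each of the two sub-segments. This concrete combinatorial input, promised from Section~\ref{sec:concrete}, is what enables the abstract scheme to close up. Combined with the continuity arguments of Sections~\ref{sec:finite-depth} and~\ref{sec:infinite-depth} (handling the fail-plane and generic limit cases respectively), it will give path-connectedness of the uniquely ergodic $4$-IETs within a single Rauzy class, which is Theorem~\ref{thm:main-for-4}.
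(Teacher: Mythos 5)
Your overall strategy---iterated subdivision at fail planes with carefully chosen uniquely ergodic intermediate points, contraction via combining matrices for generic limit points, and a separate continuity argument at the subdivision points---is exactly the paper's strategy, so the architecture is right. But two of your specific claims would not survive contact with the details. First, you propose to prove continuity at the fail-plane points $R$ using the shadow machinery of Section~\ref{sec:shadows}. That machinery describes how Rauzy expansions degenerate when an interval length tends to $0$, and in the paper it is used for a different purpose entirely (connecting uniquely ergodic IETs whose own Rauzy induction terminates, in the last step of Theorem~\ref{thm:main4}). Continuity at a subdivision point $R$ instead requires that $R$ be the endpoint of a \emph{depth-$0$ building block loop} whose associated matrix is combining: then Lemmas~\ref{lem:convergence-most-columns} and~\ref{lem:finite-depth-convergence} show that any IET whose expansion follows $P^{n}$ for large $n$ and then leaves the loop is close to $R$, with the \textbf{Isolated Idle} property needed to control the one column that does not converge projectively. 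Without arranging the intermediate points to be such loop endpoints, you have no mechanism forcing the two one-sided limits to equal $R$.

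Second, the contraction mechanism is oversimplified. Lemma~\ref{lem:powers-of-combining}(iii) concerns high \emph{powers} of a single combining matrix; picking up one combining factor in a long heterogeneous product does not make the columns nearly parallel. The paper's Section~\ref{sec:infinite-depth} needs an a priori bound on the ratio of the largest to the second-smallest column (Theorem~\ref{thm:a-priori-column-bound}), and then a dichotomy between bounded-norm almost-positive blocks (which contract the diameter of the image simplex, Lemma~\ref{lem:distributing-angle-small}) and large-norm blocks containing high loop powers (which contract angles, Lemma~\ref{lem:distributing-angle-large}); only the combination yields Proposition~\ref{prop:new crit}. Relatedly, subdividing an arbitrary segment $S\to T$ directly cannot work with a finite combinatorial supply of intermediate points: the paper first proves path-connectedness for the (images of the) finite set of building-block endpoints via the tree construction, and only then reaches arbitrary uniquely ergodic IETs by the limiting argument of Lemma~\ref{limit path}---including the genuinely separate case of uniquely ergodic $S$ for which $R^k(S)$ is not defined for all $k$, where Theorem~\ref{thm:ue-criterion} does not even apply and the shadow machinery is actually needed.
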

The proof will involve an explicit construction of paths by prescribing
Rauzy inductions. The main tool in this construction is given by the following
definition.

\begin{defin} 
  A \emph{building block} $b$ is a triple of IETs $b=(T_1,F,T_2)$ such that  
  \begin{enumerate}
  \item $T_1, F, T_2$ are uniquely ergodic with the same underlying
    permutation $\pi$.
  \item $F$ lies on the plane given by the failure of the first step of
    Rauzy induction.
  \end{enumerate}
  We call $T_1,T_2$ the \emph{endpoints} of the building block, and
  $F$ the \emph{midpoint}.
\end{defin}
The next definition lies at the core of our argument. Intuitively we would
like to say that a building block $b'=(S_1,G,S_2)$ is left compatible
with $b=(T_1,F,T_2)$ if the IETs $T_1$ and $F$ share a number of
common Rauzy steps, and the outcome is the pair $S_1,S_2$ of IETs. We
formally need to phrase this slightly differently to avoid the problem that
Rauzy induction is not well-defined for $F$.
\begin{defin}
  A building block $b'=(S_1,G,S_2)$ is \emph{left compatible} with
  a building block $b=(T_1,F,T_2)$ if the following holds:
  \begin{enumerate}
  \item $S_1=R^{k_1}(T_1)$.
  \item Every point on the straight line between $T_1$ and $F$ has
    Rauzy induction defined for $k_1$ steps, and it agrees with the
    one of $S_1$.
  \item $L(F) = M(T_1, k_1)L(S_2)$
  \end{enumerate}
  We then write $b\stackrel{L}{\to} b'$. 
  
  The \emph{Rauzy matrix associated to the compatability
    $b\stackrel{L}{\to} b'$} is then defined to be the matrix
  $M(T_1^{\text{right}},k_1)$ where $T_1^{\text{right}}$ denotes the
  limit from the right..

  \smallskip We define right compatible and $\stackrel{R}{\to}$
  similarly. We take \emph{compatible} and $\to$ to mean left compatible or
  right compatible.
\end{defin}

\begin{figure}
  \centering
  \includegraphics[width=0.75\textwidth]{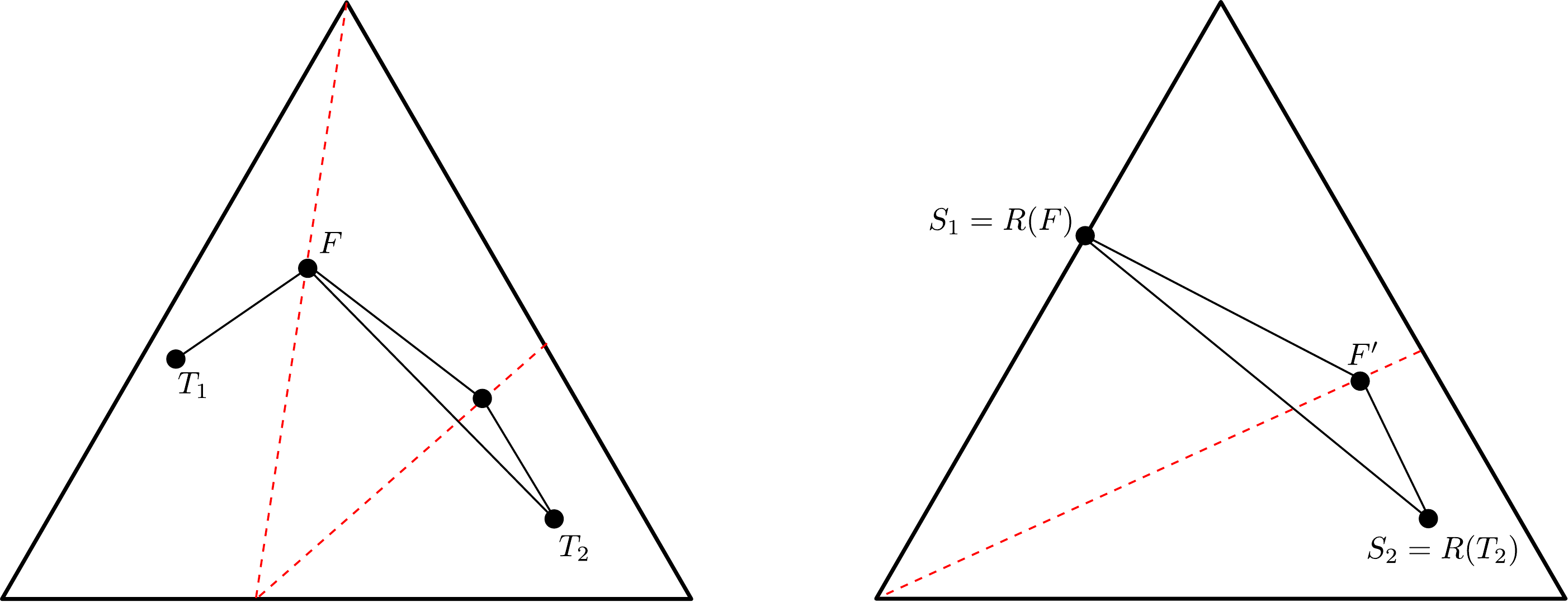}
  \caption{Building blocks. For clarity, the dimension is reduced from
  3 to 2. The dotted lines represent fail planes of Rauzy induction.}
  \label{fig:building-blocks}
\end{figure}
\begin{defin}
  A \emph{building block sequence} is a (finite or infinite) sequence
  $b_i = (T_1^i,F^i,T_2^i)$ of building blocks so that $b_{i+1}$ is
  compatible with $b_i$ for all $i$.  We often write such a sequence as
  $b_1\to b_2\to \dots$.

  For a finite building block sequence $b_1\to\dots\to b_n$ we define the
  \emph{associated Rauzy matrix} $M(b_1\to\dots\to b_n)$ as the product
  $M=M_1\dots M_{n-1}$ where $M_i$ is the Rauzy matrix associated to
  the compatibility $b_i\to b_{i+1}$.
\end{defin}
The following definition of depth of a building block sequence is
crucial for our construction. 

Intuitively, we want to define the depth
of a building block sequence as the number of times it swaps between
consecutively always choosing right and always choosing left.

To state it formally, note that a building block sequence
$b_1\to\dots\to b_n$ defines a ``direction sequence''
$d_1,\ldots, d_{n-1}$, where $d_i \in \{L,R\}$ and $d_i = L$ if and
only if $b_{i+1}$ is left compatible with $b_i$. We say that $d_i$
\emph{swaps direction at most $s$ times} if there are indices
$1=t_0<t_1<\dots<t_{s+1}=n$ so that $d_i$ is constant for
$t_j\leq i < t_{j+1}$. 

\begin{defin}
Let $b_1\to\dots\to b_n$ be a building block sequence
The \emph{depth} of the building block sequence is the smallest $s$ so that
the associated direction sequence $d_i$ swaps direction at most $s$ times.
\end{defin}
As an important example, a building block sequence $b_1\to\dots\to b_n$
has depth $0$ if either for all $i$, $b_i$ is left compatible to
$b_{i-1}$ or for all $i$, $b_i$ is right compatible to
$b_{i-1}$.

Intuitively, the sequence $b_1\to\dots\to b_n$ has depth $k$ if the compatibilities $b_i\to b_{i+1}$ swap $k$ times between choosing L or R.

\begin{defin}
  \begin{enumerate}[i)]
  \item A \emph{building block loop} is a building block sequence
    $b_1\to \dots\to b_n$ such that the last element $b_n$ is
    compatible with the first element $b_1$.

  \item Such a loop is \emph{minimal}, if $b_i \neq b_j$ for all
    $1\leq i < j \leq n$.

  \item The \emph{depth} of a building block loop $b_1\to \dots\to b_n$ is
    defined to be the depth of the building block sequence
    $b_1\to \dots\to b_n\to b_1$.
  \item If $b_1\to \dots\to b_n$ is a depth $0$ building block loop, then one
    of the endpoints of $b_1$ is contained in
    $M(b_1\to\dots\to b_n\to b_1)\Delta$. We call that IET the
    \emph{endpoint of the building block loop}.
  \end{enumerate}
\end{defin}

\medskip
We will frequently use product notation for building block
sequences. That is, let $P$ is a building block sequence $b_1\to \dots\to
b_n$ and $Q$ is a building block sequence $b_{n+1}\to\dots\to b_m$.
We say that $P$ is compatible with $Q$ if $b_n$ is compatible with $b_{n+1}$.
In that case we denote by $P\to Q$ the sequence 
$$b_1\to \dots\to b_n \to b_{n+1}\to\dots\to b_m$$
obtained by concatenating $P$ and $Q$.  Similarly, if $P$ is a
building block loop $b_1, \ldots, b_n$, the $P^n$ is the sequence
$P\to \dots \to P$ obtained by concatenating $P$ with itself $n$
times.

\begin{defin}
  If $P$ is a building block loop $b_1\to\dots\to b_n$, then we say
  that a building block sequence $c_1\to c_2\to c_3$ compatible with $P$
  \emph{leaves the loop $P$} if the sequence $c_1\to c_2\to c_3$ does not
  contain $P$ as an initial segment, and is not contained in $P$ as an
  initial segment. 
\end{defin}

In Section~\ref{sec:concrete} we will show the following Proposition
by an explicit construction.
\begin{prop}\label{prop:existence-of-building-blocks}
  There is a finite set $\mathcal{B}$ of building blocks with the
  following properties 
  \begin{description}
  \item[Transitivity] For every permutation $\pi$ in the Rauzy class
    of $(4321)$ there is a building block in the set $\mathcal{B}$
    whose endpoints lie on different sides of the fail plane.
  \item[Completeness] Every building block in $\mathcal{B}$ has a left and
    right compatible building block in the set $\mathcal{B}$.
  \item[Combining Loops] If $b_1\to\dots\to b_k$ is a minimal depth
    $0$ loop formed from building blocks in the set $\mathcal{B}$, then
    the matrix $M(b_1\to\dots\to b_k)$ is combining.
  \item[Isolated Idle] Suppose $P$ is a building block loop of depth
    $0$ formed by building blocks in the set $\mathcal{B}$, so that
    $M(P)$ has an idle column $i$. Then, for any building block sequence
    $b_1\to b_2\to b_3$ which leaves $P$ formed by building blocks in the set
    $\mathcal{B}$, each column of
    $M(P\to b_1\to b_2\to b_3)$ has at least two nonzero entries.
  \item[Almost Positivity] There is a number $c \geq 0$ so that every
    building block sequence formed by building blocks in the set
    $\mathcal{B}$ of depth at least $c$ has an initial segment
    $b_1\to \dots\to b_k$ with $k\leq c$ so that
    $M(b_1\to\dots\to b_k)$ is \emph{Almost Positive}: i.e. it has
    $\tau>1$ rows with all entries positive and the other rows are
    rows of the identity matrix. By this we mean that they contain
    exactly one entry $1$, and all other entries are $0$.
  \end{description}
\end{prop}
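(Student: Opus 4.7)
The plan is to construct $\mathcal{B}$ explicitly by choosing, for each permutation $\pi$ in the Rauzy class of $(4321)$, a short closed path in the Rauzy diagram based at $\pi$ whose associated matrix is positive. The Perron--Frobenius eigenvectors of such matrices provide uniquely ergodic length vectors (by Theorem~\ref{thm:ue-criterion}), which serve as the endpoints and midpoints of the building blocks. Concretely, for each $\pi$ I would fix a periodic Rauzy loop $\gamma_\pi$ with positive matrix $P_\pi$, and position the midpoint $F$ on a fail plane by intersecting the $P_\pi$-invariant cone with the codimension-one locus $\delta_+ = \delta_-$; the two endpoints $T_1, T_2$ are then taken to be nearby Perron--Frobenius eigenvectors of suitable variants of $P_\pi$ lying on opposite sides of the fail plane. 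Since the Rauzy class is finite and each periodic loop can be chosen short, this produces a finite candidate set and immediately establishes \textbf{Transitivity}.

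To obtain \textbf{Completeness} one must close $\mathcal{B}$ under the operation of taking left and right compatible successors. The finiteness of $\mathcal{B}$ requires the collection of periodic loops $\{\gamma_\pi\}$ to form a closed system: namely, the Rauzy matrix $M(T_1, k_1)$ arising from a compatibility relation should send some building block's sub-simplex into the sub-simplex of another building block already in the list. This can be enforced by choosing the $\gamma_\pi$ so that their composition with the initial Rauzy segments of the building blocks recovers another $\gamma_{\pi'}$; in other words, the periodic paths are arranged to be iterates (or factors) of one another across the Rauzy class. Concretely one might take all $\gamma_\pi$ as translates of a single long positive loop, guaranteeing that successive Rauzy steps re-enter the chosen system.

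The three remaining properties are verified by direct matrix computation over this finite list. For \textbf{Combining Loops}, one enumerates the finitely many minimal depth-$0$ loops in $\mathcal{B}$ and checks that each associated Rauzy matrix splits its columns into active/passive/idle according to the definition; the fact that the ambient loops are built from positive $\gamma_\pi$'s makes combining structure generic. For \textbf{Isolated Idle}, one observes that if a column is idle in $M(P)$ then the corresponding coordinate is untouched by the loop, and one checks that any departing building block sequence $b_1 \to b_2 \to b_3$ of length three must perform a Rauzy step that couples the idle coordinate to at least one other. For \textbf{Almost Positivity}, one chooses $c$ as an upper bound on the number of depth-$0$ segments needed to guarantee that at least one of them executes a full positive loop $\gamma_\pi$ with the identity-plus-positive structure described.

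The main obstacle is balancing the competing constraints: Completeness demands enough building blocks that $\mathcal{B}$ is closed under compatibility, but each added building block creates new minimal loops whose matrices must still be combining, and new sequences leaving loops whose idle structure must still be controlled. The delicate part is therefore the simultaneous choice of the periodic paths $\gamma_\pi$ across the entire Rauzy class so that the induced closure is both finite and combinatorially rigid enough for the Combining Loops and Almost Positivity conditions to hold uniformly. Once such a uniform choice is exhibited, the five verifications reduce to bounded, if tedious, casework on explicit $4\times 4$ nonnegative integer matrices.
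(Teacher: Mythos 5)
Your proposal is a strategy outline rather than a proof, and for this particular proposition that distinction is fatal: the statement is an existence claim whose entire content is the explicit exhibition of a finite set $\mathcal{B}$ together with a finite verification. The paper proves it by writing down concrete diagrams of building blocks (Figures~\ref{fig:diagram-a}--\ref{fig:diagram-efgh}, with the composite Rauzy paths listed in Appendix~\ref{sec:composite-paths}), reading off \textbf{Transitivity} and \textbf{Completeness} from the diagrams, listing all eight minimal depth-$0$ loops with their $4\times 4$ matrices to check \textbf{Combining Loops} and \textbf{Isolated Idle}, and doing a case-by-case (automatable) computation for \textbf{Almost Positivity}. You explicitly defer exactly this step (``once such a uniform choice is exhibited\dots''), so the proof is not complete.

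Beyond the missing construction, two of your heuristics point in the wrong direction. First, you suggest the minimal depth-$0$ loops will have essentially positive matrices because they are built from positive periodic loops $\gamma_\pi$; in fact the minimal depth-$0$ loops are the cycles of the left- (or right-) successor map on $\mathcal{B}$, and their matrices are typically far from positive --- e.g.\ $M_{\alpha 1}$ has an idle column and two rows of the identity. This is precisely why the paper introduces the weaker notion of \emph{combining} matrices and why the separate \textbf{Isolated Idle} condition is needed; if the loop matrices could be taken positive, most of Sections~\ref{sec:matrices}--\ref{sec:infinite-depth} would be unnecessary. Second, your description of the endpoints as Perron--Frobenius eigenvectors of the $\gamma_\pi$ and of $F$ as an intersection of an invariant cone with the fail plane does not match the compatibility structure: in the paper the endpoints are determined recursively by the infinite depth-$0$ continuations inside $\mathcal{B}$ (so their unique ergodicity is a consequence of \textbf{Combining Loops}, not an input), and $F$ is pinned down by the relation $L(F)=M(T_1,k_1)L(S_2)$ with the next block. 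Reconciling these constraints simultaneously across the whole Rauzy class is the hard combinatorial work, and no general principle you cite substitutes for actually doing it.
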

In Sections~\ref{sec:finite-depth}
to~\ref{sec:connecting-building-blocks} we will show how to use
Proposition~\ref{prop:existence-of-building-blocks} to show
Theorem~\ref{thm:main-for-4}. Hence, we fix once and for all a finite
set $\mathcal{B}$ as given by
Proposition~\ref{prop:existence-of-building-blocks}. Any mention of
building blocks will refer to elements of this set. We will also refer
to the properties guaranteed by
Proposition~\ref{prop:existence-of-building-blocks} by their name,
without explicit mention of
Proposition~\ref{prop:existence-of-building-blocks}.

Following the sketch outlined in the
introduction there is a general dichotomy of points on the paths: finite depth
points eventually follow the Rauzy induction of a fixed uniquely
ergodic IET (which is the left or right endpoint of a building block). Continuity at these points will be shown using the tools
in Section~\ref{sec:finite-depth}. Infinite depth points on the
other hand are those that do not eventually lie on a fail plane, and
whose Rauzy induction does not eventually follow one of the
endpoints. For these points, convergence, 
unique ergodicity and continuity all need to be checked, using
techniques developed in Section~\ref{sec:infinite-depth}. Finally,
in Section~\ref{sec:connecting-building-blocks} we collect the
pieces and prove Theorem~\ref{thm:main-for-4}.

\section{Finite Depth}\label{sec:finite-depth}

\begin{lem}\label{lem:convergence-most-columns}
  Let $P = b_1 \to\dots\to b_k$ be a minimal depth $0$ loop. Denote by $b$ the
  index of the column which is idle if it exists. We consider the
  building block sequence $P^n$. Then there exists a vector
  $V$ such that 
  $$\frac{C_i(M(P^n))}{|C_i(M(P^n))|} \to V$$
  for all $i\neq b$. 

  Furthermore, the size of the columns $i\neq b$ grow exponentially,
  and the size of column $b$ is constant.
\end{lem}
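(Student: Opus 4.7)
Write $M=M(P)$, so that $M(P^n)=M^n$. By the Combining Loops conclusion of Proposition~\ref{prop:existence-of-building-blocks}, $M$ is combining; let $b$ be the index of its idle column when one exists. Let $Y\subset\RR^4$ and $\hat{M}$ be the invariant subspace and the corresponding positive matrix produced in the proof of Lemma~\ref{lem:powers-of-combining}, and let $\mu>1$ and $\hat{w}$ denote the Perron-Frobenius eigenvalue and unit positive eigenvector of $\hat{M}$. Set $V=\hat{w}/|\hat{w}|$. The plan is to deduce the three conclusions by combining Perron-Frobenius for $\hat{M}$ with the estimates of Lemma~\ref{lem:powers-of-combining} in the case of non-idle columns, and by a direct inspection of the explicit Rauzy matrices for the idle column.

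For the idle column, the goal is to verify $Me_b=e_b$: once this holds, $M^ne_b=e_b$ and $|C_b(M^n)|=1$ is constant. The abstract combining definition already pins down the $b$-th row of $M$ (condition (3) forces $M_{b,j}=0$ for $j\neq b$, since column $b$ cannot be added to any other column), but it does not directly control column $b$. To finish I would verify, using the explicit building blocks of Section~\ref{sec:concrete} together with the formulas for the single-step Rauzy matrices, that an idle index in $M(P)$ corresponds to a coordinate $b$ left untouched by every factor $M_i$ in $M=M_1\cdots M_k$, so that $C_b(M)=e_b$. I expect this to be the main technical obstacle, since the abstract definition of ``combining'' alone does not force $C_b(M)=e_b$; the argument must use the particular Rauzy matrices arising from the building blocks rather than combinatorics of their columns alone.

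For indices $i\neq b$, the analysis parallels the proof of Lemma~\ref{lem:powers-of-combining}. If $i$ is active then $C_i(M^n)\in Y$, and Perron-Frobenius (Proposition~\ref{prop:hilbert}) applied to $\hat{M}$ gives $|C_i(M^n)|\asymp\mu^n$ together with $C_i(M^n)/|C_i(M^n)|\to V$ at an exponential rate. If $i$ is passive, write $C_i(M^n)=\sum_{j=0}^{n-1}v_j$ where each $v_j$ is a non-negative combination of active columns of $M^j$; the term $v_{n-1}$ dominates, with norm $\asymp\mu^{n-1}$ and direction converging to $V$, so $|C_i(M^n)|\asymp\mu^n$ and $C_i(M^n)/|C_i(M^n)|\to V$ as well. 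The quantitative sine bound from Lemma~\ref{lem:powers-of-combining}(iii) then confirms that all non-idle normalized columns share the single common limit $V$.
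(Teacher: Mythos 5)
Your proposal is correct and follows essentially the same route as the paper, whose entire proof is to cite Lemma~\ref{lem:powers-of-combining} together with \textbf{Combining Loops}. The point you flag about the idle column is legitimate --- the abstract definition of combining does not by itself force $C_b(M)=e_b$, and the paper glosses over this --- but the intended reading of ``idle'' is that nothing is added to column $b$ either, and the explicit check you propose does succeed: the only minimal depth $0$ loops with an idle column are $A1\to A1$ and $B1\to B1$, whose matrices $M_{\alpha 1}$ and $M_{\beta 1}$ have idle columns $e_1$ and $e_2$ respectively.
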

\begin{proof}
  This is immediate from Lemma~\ref{lem:powers-of-combining} and
  \textbf{Combining Loops}.
\end{proof}
\begin{lem}\label{lem:finite-depth-convergence}
  Let $P$ be a minimal depth $0$ loop, and let $P_0$ be a building
  block sequence compatible with $P$. 
  We consider building block sequences of the form
  $P_k = P_0\to P^{n_k}\to Q_k$ where $Q_k$ is a building block sequence of
  length $3$ which leaves the loop $P$ and
  $\underset{k \to \infty}{\lim}\, n_k=\infty$.

  Let $T_\infty$ be the IET corresponding to the endpoint of $P$
  defined by the loop. Let $\{T_i\}_{i\in\NN}$ be IETs so that
  $T_k \in M(P_k)\Delta$ (i.e. the Rauzy induction of $T_k$ agrees with the
  one defined by $P_k$)

  Then the $T_k$ converge (as a sequence of points in the simplex) to
  $T_{\infty}$.
\end{lem}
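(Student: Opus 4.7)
The plan is to show that the normalized simplex $M(P_k)\Delta$ shrinks onto the single point $T_\infty$ as $n_k\to\infty$. Since $T_k\in M(P_k)\Delta$, the convergence $T_k\to T_\infty$ then follows immediately. The strategy is to prove that every column of $M(P_k)$, after normalization, converges to the length vector $L(T_\infty)$.

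I would first factor $M(P_k)=A\cdot M(P)^n\cdot B_k$ (absorbing the transitional junction matrices), where $A$ is a fixed matrix depending only on $P_0$, the power $n$ differs from $n_k$ only by a bounded amount, and $B_k = M(P\to Q_k)$ is the trailing matrix. Since $Q_k$ has length $3$ and is drawn from the finite set $\mathcal{B}$, only finitely many matrices arise as $B_k$, with uniformly bounded entries.

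Next I analyze the columns of $M(P)^n B_k$. By Lemma~\ref{lem:convergence-most-columns} (via \textbf{Combining Loops}), the non-idle columns of $M(P)^n$ grow exponentially and their normalizations converge to a common vector $V$; a possible idle column of index $b$ is the standard basis vector $e_b$ and has bounded size. Writing
\[
C_j\bigl(M(P)^n B_k\bigr)=\sum_i (C_j(B_k))_i\, C_i(M(P)^n),
\]
the \textbf{Isolated Idle} property guarantees that every column $C_j(B_k)$ has at least two nonzero entries, so some non-idle index $i\neq b$ satisfies $(C_j(B_k))_i>0$. Hence the exponentially growing non-idle contributions dominate, and each normalized column of $M(P)^n B_k$ converges to $V$. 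Since left-multiplication by the fixed matrix $A$ preserves this convergence, each normalized column of $M(P_k)$ tends to $AV/|AV|_1$, which I would identify with $L(T_\infty)$: the length vector of the IET in the ambient simplex whose Rauzy induction is $P_0\to P^\infty$, equivalently the endpoint of $P$ pulled back via $M(P_0)$. Because $T_k$ is a convex combination of these normalized columns, $T_k\to T_\infty$ follows.

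The main obstacle is handling a possible idle column of $M(P)$: since it contributes no exponential growth, the \textbf{Isolated Idle} hypothesis is essential — otherwise a column of $B_k$ could be supported entirely on the idle row, keeping the corresponding column of $M(P)^n B_k$ bounded and misaligned from $V$, so that $M(P_k)\Delta$ would fail to shrink to a point. A secondary bookkeeping issue is the exact absorption of junction compatibility matrices into $A$ and $B_k$, but all that the argument requires is that $A$ be independent of $n_k$ and that the finite family of possible $B_k$'s have uniformly bounded entries.
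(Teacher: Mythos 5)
Your proposal is correct and follows essentially the same route as the paper's proof: both reduce to $P_0$ acting as a fixed continuous projective map, both invoke Lemma~\ref{lem:convergence-most-columns} for the exponential growth and projective convergence of the non-idle columns of $M(P)^{n}$, and both use \textbf{Isolated Idle} to guarantee that the trailing matrix $M(P\to Q_k)$ places positive weight on at least one non-idle column so that the bounded idle column cannot dominate. The only (cosmetic) difference is that you show all normalized columns of $M(P)^{n}B_k$ converge to $V$, so the whole image simplex collapses, whereas the paper writes the specific point $L(T_k)$ as a convex combination $\sum a_i v_i$ and checks that at least two coefficients $a_i$ are bounded below by some $\epsilon_0$ --- the same mechanism in a pointwise rather than setwise formulation.
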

\begin{proof}
  To begin, note that we may assume that $P_0$ is empty. Namely, the
  Rauzy induction corresponding by $P_0$ defines a continuous map
  $M(P_0)$ of the simplex which does not change the result.

  We fix $m_k>0$ so that the Rauzy expansion of the building block
  sequence $P^{n_k-1}$ has $m_k$ Rauzy steps. Further, let $m'_k$ be the
  number of Rauzy steps in $P\to Q_k$. Thus, the initial $N_k = m_k+m'_k$ Rauzy
  steps of the interval exchange $T_k$ agree with the Rauzy expansion defined
  by $P_k$.

  Next note that the (unnormalized) length vector of $T_k$ satisfies
  $$\hat{L}(T_k) = M\left(P^{n_k}Q_k\right) \hat{L}\left(R^{N_k}(T_k)\right)$$
  The length vector for $T_k$ is $L(T_k) = \hat{L}(T_k)/|\hat{L}(T_k)|_1$.

  Write $L\left(R^{m_k}(T_k)\right) = \sum_{i=1}^4 a_i e_i$. We have
  $\sum_{i=1}^4a_i = 1$. We let $v_i$ denote the $i$-th column of $M(P^{n_k-1})$. 
  Note that in both of these shortcuts we are
  suppressing the dependence of the $a_i$ and the $v_i$ on $k$. We
  have
  $$L(T_k) = \frac{\sum_{i=1}^4 a_i v_i}{\sum_{i=1}^4 a_i |v_i|_1}$$

  Next, we claim that there is a number $\epsilon_0 > 0$ (depending
  only on the set of building blocks and not $k$) such that at least
  two of the $a_i$ are at least $\epsilon_0$.  Namely, we have
  \[ (a_1, \ldots, a_4) = M(P\to Q_k)v/|M(P\to Q_k)v|_1 \]
  for some vector $v$ with non-negative entries and $|v|_1=1$. Hence,
  at least one entry $v_i$ of $v$ has size at least $1/4$.  By
  Isolated Idle, each column of $M(P\to Q_k)$ has at least two
  positive entries, and hence $M(P\to Q_k)e_i$ has at least two
  nonzero entries for each $i$. Thus, at least two entries of the
  (unnormalized) vector $M(P\to Q_k)v$ have size at least
  $1/4$. On the other hand, note that $M(P\to Q_k)$ is a
  matrix whose entries can be bounded by some constant which does not
  depend on $k$ (since the set of building blocks is finite). Thus,
  $|M(P\to Q_k)v|_1$ can likewise be bounded by some constant
  independent of $k$. This implies the existence of $\epsilon_0$ as
  claimed.

  Now suppose that we are given some
  $\epsilon > 0$. We choose $K$ so that for all $n>K$ the following hold
  (where $V$ is the vector given by Lemma~\ref{lem:convergence-most-columns})
  \begin{enumerate}[i)]
  \item For all but at most one $i$, we have
    $$|v_i - |v_i|_1V|_1 < \epsilon |v_i|_1$$
    We call the $i$ where this fails the \emph{bad $i$} and $v_i$ the \emph{bad column}. We call other indices $j\neq i$ and the corresponding columns $v_j$ \emph{good}. 
    This property
    can be ensured by Lemma~\ref{lem:convergence-most-columns}.
  \item 
    If there is a bad column $v_i$, then
    $$\frac{a_iv_i}{\sum_{j \mathrm{good}} a_j |v_j|} < \epsilon.$$
    This is possible since at least one coefficient $a_j$ of a good
    column $v_j$ is at least $\epsilon_0$, and the sizes of the good
    columns grow exponentially, while the size of the bad column
    is uniformly bounded by Lemma~\ref{lem:convergence-most-columns}.
  \end{enumerate}
  Now, let $n>K$ be given.

  \smallskip
  We first consider the case in which there is no bad $i$ for $M(P^N)$. 
  In that case, we compute  
  $$|L(T_k) - V|_1 = \left|\frac{\sum_{i=1}^4 a_i v_i - \sum_{i=1}^4 a_i
      |v_i|_1 V}{\sum_{i=1}^4 a_i |v_i|_1}\right|_1 \leq
  \frac{\sum_{i=1}^4 a_i \left|v_i - |v_i|_1 V\right|_1}{\sum_{i=1}^4
    a_i |v_i|_1}.$$ 
  Thus by i) 
  $$|L(T_k) - V|_1 < \frac{\sum_{i=1}^4 a_i|v_i|_1\epsilon}{\sum_{i=1}^4
    a_i |v_i|_1} = \epsilon.$$ 
  Now suppose that there is a bad $i$, which without loss of
  generality we may assume to be $1$.

  In this case, we compute
  $$L(T_k) = \frac{\sum_{i=1}^4 a_i v_i}{\sum_{i=1}^4 a_i |v_i|_1}
  = \frac{a_1 v_1}{\sum_{i=1}^4 a_i |v_i|_1} 
  + \frac{\sum_{i=2}^4 a_i v_i}{\sum_{i=1}^4 a_i |v_i|_1}.$$
  By property ii), the first summand has size at most $\epsilon$. Now
  note that for any numbers $K<D$ and $c$ one has
  $$\frac{K}{D} - \frac{K}{c+D} = \frac{cK}{D(c+D)} < c \frac{K}{D} \frac{1}{D}
  < c \frac{1}{D}.$$
  We apply this estimate for $K$ being each of the four entries of the
  vector $\sum_{i=2}^4 a_i v_i$ with
  $D = \sum_{i \mathrm{good}} a_i |v_i|_1$ and $c=a_1|v_1|_1$ to
  conclude that the second summand in the expression for $L(T_k)$ is
  within $4\epsilon$ of
  $$\frac{\sum_{i=2}^4 a_i v_i}{\sum_{i=2}^4 a_i |v_i|_1}.$$
  Arguing as in the first case, this is within $\epsilon$ of $V$. In
  conclusion, we thus have $|L(T_k) - V|_1 < 6\epsilon$.

  Thus, the normalized length vectors $L(T_k)$ converge to $V$.
\end{proof}

\begin{cor}\label{cor:non idoc} Let $P$ be an infinite Rauzy path so
  that at least 3 columns increase in size an unbounded 
  amount\footnote{Note that all matrices of Rauzy induction have only non-negative entries and thus such an increase is automatically monotone.} and 
  all columns which do increase an unbounded amount converge projectively to some fixed vector
  $V$. Let $Q_1,...,Q_k$ be Rauzy paths with positive associated Rauzy
  matrices. 

  Then for every $\epsilon>0$ there is a $N>0$ so that if $n>N$, and 
  $T$ is any IET whose initial $n$-steps of Rauzy
  induction agree with $P$ and then are followed by some $Q_i$, then
  $L(T)$ is within $\epsilon$ of $V$.
\end{cor}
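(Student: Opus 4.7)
The plan is to exploit positivity of the matrices $M(Q_i)$ in order to obtain a uniform strictly positive lower bound on the coefficients in the expansion of $\hat L(T)$ as a combination of the columns of $M(P,n)$, and then to isolate the contribution of the unboundedly growing columns, which by hypothesis all approximately point in the direction $V$. The outline closely parallels the proof of Lemma~\ref{lem:finite-depth-convergence}.

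First I would unravel the Rauzy recursion. If the first $n + |Q_i|$ Rauzy steps of $T$ are prescribed by $P$ followed by $Q_i$, then
$$\hat L(T) = M(P,n)\, M(Q_i)\, \hat L(R^{n + |Q_i|}T).$$
Writing $v$ for the normalized length vector of $R^{n + |Q_i|}T$ and $w = M(Q_i) v$, we obtain $L(T) = M(P,n)w / |M(P,n)w|_1$. Since each $M(Q_i)$ is positive with integer entries (hence all entries $\geq 1$) and since $|v|_1 = 1$, every entry of $w$ satisfies $w_j \geq 1$, while at the same time $w_j$ is bounded above by the largest row sum occurring among the finitely many $M(Q_i)$. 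Both bounds are uniform in $T$, in the probability vector $v$, and in the choice of $Q_i$.

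Second, I would split $M(P,n)w = \sum_j w_j C_j(M(P,n)) = A_n + B_n$, where $A_n$ gathers the summands with index $j \in S$ (the set of unboundedly growing columns, of size at least $3$ by assumption) and $B_n$ gathers the rest. By the hypothesis on $P$, the columns with index outside $S$ have norm bounded uniformly in $n$, and together with the uniform upper bound on the $w_j$ this yields a constant $M$ with $|B_n|_1 \leq M$ for all $n$, $T$, and $i$. On the other hand $|A_n|_1 \geq \sum_{j \in S} |C_j(M(P,n))|_1 \to \infty$, because each $w_j \geq 1$ and the growing columns have norm tending to infinity. Writing $A_n = |A_n|_1\,\alpha_n$, the unit vector $\alpha_n$ is a convex combination of the normalized columns $C_j(M(P,n)) / |C_j(M(P,n))|_1$ for $j \in S$, each of which tends to $V$, so $|\alpha_n - V|_1 \to 0$ as $n \to \infty$, uniformly in $T, v, i$.

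Third, because all entries of $A_n$ and $B_n$ are nonnegative we have $|A_n + B_n|_1 = |A_n|_1 + |B_n|_1$, and so
$$L(T) - V = \frac{|A_n|_1}{|A_n|_1 + |B_n|_1}(\alpha_n - V) + \frac{B_n - |B_n|_1 V}{|A_n|_1 + |B_n|_1}.$$
Taking $1$-norms and using $|V|_1 = 1$ yields
$$|L(T) - V|_1 \leq |\alpha_n - V|_1 + \frac{2\,|B_n|_1}{|A_n|_1 + |B_n|_1}.$$
Choosing $N$ large enough that both summands are smaller than $\epsilon / 2$ for all $n > N$ then completes the proof. The main thing to check carefully is that all estimates are uniform in $T$, in the tail vector $v$, and over the finite collection $\{Q_1, \ldots, Q_k\}$; positivity of the $M(Q_i)$ and finiteness of the collection are precisely what makes this possible, and the rest is bookkeeping.
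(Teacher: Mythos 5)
Your proposal is correct and follows essentially the same route as the paper, which simply invokes conditions i) and ii) from the proof of Lemma~\ref{lem:finite-depth-convergence} (good columns converge projectively to $V$ and dominate the bounded one) together with positivity of the $M(Q_i)$ to bound the coefficients from below. Your version is a self-contained rewrite of that argument with slightly cleaner bookkeeping (working with the unnormalized $w_j\geq 1$ instead of the normalized $a_i\geq\epsilon_0$), but the decomposition and estimates are the same.
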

\begin{proof}
  By assumption of the Corollary, for any $\epsilon$ there is an $N$
  so that for all $n>N$ conditions i) and ii) stated in the proof of
  Lemma~\ref{lem:finite-depth-convergence} hold for the column $v_i$
  of the Rauzy matrix $M(P_n)$ describing the first $n$ steps of $P$
  (where $V$ is now the vector given by the assumption).

  Furthermore, as $Q_i$ is assumed to be positive, $L(R^N(P_n))$ has
  at least two entry of size at least $\epsilon_0$ (arguing as in the
  previous proof, with positivity of $Q_i$ replacing the part of the argument
  involving Isolated Idle).

  From here, one can finish the proof exactly as in the proof of 
  Lemma~\ref{lem:finite-depth-convergence} above.
\end{proof}

\section{Infinite Depth}\label{sec:infinite-depth}
Next, we analyse infinite depth points. Here, the situation is much
more involved. Convergence, continuity and unique ergodicity will all
follow from the same mechanism, which we now describe. 

Throughout, we consider an infinite building block sequence $(b_i)$
of infinite depth. This in particular means that the sequence $b_i$
does not end in an infinite power of a depth $0$ loop.

We rewrite the sequence $b_1\to b_2 \to \dots$ as a sequence of paths
$B_1\to B_2 \to \dots$, where each $B_i$ is either a power of a
minimal depth $0$ loop, or a single building block, and the
re-grouping is maximal with that property (that is: if $B_i$ is a
power of a minimal depth $0$ loop then the sequence following $B_i$
leaves the loop). By our assumption, each $B_i$ is a finite building
block sequence, and the sequence of $B_i'$ is infinite.

Denote throughout this section by $A_i=M(B_i)$ the matrix
corresponding to $B_i$.

By Proposition~\ref{prop:existence-of-building-blocks} these matrices
satisfy the following:
\begin{cor}\label{cor:matrix-properties}
There are numbers $c,N>0$ such that for all $i$ the following holds:
\begin{enumerate}
\item[(P)] The product $A_i \cdot...\cdot A_{i+c}$ is positive  or
\item[(R)] There exists $c'\leq c$ so that $A_i\cdot....\cdot
  A_{i+c'}$ has $\tau>1$ rows with all entries positive and the
  other $(4-\tau)$ rows are rows of the identity matrix.
\end{enumerate}
Furthermore, each matrix $A_i$ satisfies one of the following.
\begin{enumerate}
\item[(S)] $||A_i||<N$ or
\item[(C)] $A_i$ is the power of a combining matrix $B$ with $\|B\|<N$.
\end{enumerate}  
\end{cor}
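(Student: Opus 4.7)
The plan is to unpack the corollary directly from the properties listed in Proposition~\ref{prop:existence-of-building-blocks}, using the structure of the grouping $B_1\to B_2\to\dots$. Recall that each $B_i$ is by construction either a single building block or a maximal power $B_i=P^{n_i}$ of a minimal depth-$0$ loop $P$. For the norm conditions (S)/(C), the set $\mathcal{B}$ is finite, so there are only finitely many possible single-block matrices, and minimality forces every minimal depth-$0$ loop to have length at most $|\mathcal{B}|$, so there are also only finitely many base matrices $M(P)$. Letting $N$ exceed all these norms: if $B_i$ is a single block then $\|A_i\|<N$, giving (S); if $B_i=P^{n_i}$, then $A_i=M(P)^{n_i}$ where $M(P)$ is combining by the Combining Loops property and satisfies $\|M(P)\|<N$, giving (C).

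For the positivity conditions (P)/(R), I would let $c_0$ be the constant supplied by Almost Positivity. Since the full building block sequence has infinite depth, every tail does as well; in particular the tail $b$-subsequence starting at the first building block of $B_i$ has depth $\geq c_0$. Applying Almost Positivity to this tail produces an initial $b$-segment $b_1\to\dots\to b_k$ of length $k\leq c_0$ whose Rauzy matrix has the almost positive form of (R), with the subcase $\tau=4$ being exactly (P). Since each group $B_j$ contains at least one building block, the $b$-segment is contained in the first $c_0$ groups $B_i,\dots,B_{i+c_0-1}$, and I would let $c'$ be the index of the group containing $b_k$ and consider the product $A_i\cdots A_{i+c'}$.

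The hard part will be alignment: $b_k$ may terminate in the interior of the group $B_{i+c'}=P^{n_{i+c'}}$, so that $A_i\cdots A_{i+c'}$ equals the almost positive matrix $M(b_1\to\dots\to b_k)$ multiplied on the right by a remaining initial portion of $M(P)^{n_{i+c'}}$, itself a power of the combining matrix $M(P)$. The key claim to verify is that this multiplication preserves (or strengthens) the almost positive form: any all-positive row of the left factor stays all-positive after multiplication by a non-negative matrix with no zero column, while an identity row $e_s$ is replaced by the $s$-th row of the tail factor, which by Lemma~\ref{lem:powers-of-combining} together with the combining structure of $M(P)$ becomes all-positive once enough powers have been absorbed, except at any idle index, which continues to contribute an identity row. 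Taking $c=c_0+|\mathcal{B}|$ absorbs any alignment padding, and this case-check is the only real technical content of the proof.
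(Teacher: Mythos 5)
Your handling of (S)/(C) is correct, and applying \textbf{Almost Positivity} to the tail of the block sequence starting at $B_i$ is the right idea for (P)/(R) (the paper itself offers no argument, simply asserting the corollary follows from Proposition~\ref{prop:existence-of-building-blocks}). However, the ``key claim'' on which your alignment step rests is false. Suppose the group $B_{i+c'}$ is a power of a loop $P$ whose matrix $M(P)$ has an idle column $b$. Then column $b$ of $M(P)^j$ equals $e_b$ for every $j$, so every row of $M(P)^j$ other than row $b$ carries a permanent zero in position $b$; at the same time the rows of $M(P)^j$ indexed by \emph{active} columns have at least two positive entries (positive diagonal plus the entry coming from the other active column), so they are not rows of the identity either. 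Concretely, for the $A1\to A1$ loop, row $2$ of $M_{\alpha 1}^{j}$ is $(0,*,*,*)$ with three positive entries and a permanent $0$ in the first slot: it never becomes all-positive and is never a row of the identity, no matter how many powers are ``absorbed.'' Hence if the Almost Positive matrix $M(b_1\to\dots\to b_k)$ has an identity row $e_s$ with $s$ an active index of the following loop, the product you form fails to have the form required by (R). Whether this configuration is actually excluded for the specific set $\mathcal{B}$ is precisely what would still need to be checked; as a formal deduction from the listed properties, your argument does not close.

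A second, related problem is your literal reading of Almost Positivity with $k\leq c_0$ counting \emph{building blocks}. Read that way, the property is already inconsistent with the existence of idle-column loops: if $B_i=P^{n_i}$ with $n_i$ huge and $P$ the $A1\to A1$ loop, every initial segment of length at most $c_0$ lies inside $P^{n_i}$ and its matrix is a power of $M_{\alpha 1}$, which is never Almost Positive by the computation above. The bound must be on the number of direction swaps of the initial segment (this is how Section~\ref{sec:concrete} verifies the property, by inserting arbitrary loop powers into depth-increasing paths), and one then needs a separate count showing that a segment with at most $c_0$ swaps meets at most $c$ of the groups $B_j$, before facing the completion-to-a-group-boundary issue discussed above. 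Your write-up elides the first point and mishandles the second, and that is where the actual content of the corollary lies.
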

The following lemma follows from the finiteness of the set of building blocks and property (C):
\begin{lem}\label{lem:loop enough} For all $k$ there exists $n$ so
  that if $\|A_i\|>n$ then $A_i$ is at least the $k^{th}$ power of a
  combining matrix.
\end{lem}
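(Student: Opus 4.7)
The plan is to combine the dichotomy established in Corollary~\ref{cor:matrix-properties} with submultiplicativity of an appropriate matrix norm. The statement asserts that for any fixed $k$, once $\|A_i\|$ exceeds a certain threshold $n$, the matrix $A_i$ must be expressible as $B^m$ for a combining $B$ with $m\geq k$. The essential idea is that large norm rules out alternative (S) of the corollary, and the uniform bound $\|B\|<N$ from alternative (C) then forces the exponent $m$ to be large.

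First I would note that whenever $\|A_i\| > N$, property (S) in Corollary~\ref{cor:matrix-properties} cannot hold, so $A_i$ must satisfy (C); that is, $A_i = B^m$ for some combining matrix $B$ with $\|B\| < N$ and some integer $m \geq 1$. This already settles the statement for $k = 1$. For larger $k$, I would invoke submultiplicativity of the matrix norm to write
\[
\|A_i\| = \|B^m\| \leq \|B\|^m < N^m,
\]
so that the hypothesis $\|A_i\| > n$ forces $N^m > n$, i.e.\ $m > \log n / \log N$. Choosing $n = N^{k}$ then yields $m > k$, so $A_i$ is at least the $k$-th power of a combining matrix, as required.

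The main point to verify---and essentially the only place where the argument could conceivably go wrong---is that $N > 1$, so that the logarithmic bound above is meaningful and the exponent $m$ genuinely grows with $n$. This however follows from the structure of combining matrices: such a matrix has non-negative integer entries with at least two active columns each having the other active columns added to it, so any combining matrix $B$ arising from a minimal depth $0$ loop in $\mathcal{B}$ satisfies $\|B\| > 1$. Finiteness of $\mathcal{B}$ (together with the minimality of the loops under consideration) ensures that the set of such base matrices $B$ is finite, so the common upper bound $N$ can in fact be taken strictly greater than $1$, and the exponent in the logarithmic bound is strictly positive. Beyond this minor verification, the proof is a direct three-line application of Corollary~\ref{cor:matrix-properties} and the inequality $\|B^m\|\leq \|B\|^m$.
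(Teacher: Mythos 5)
Your proof is correct and matches the paper's intent exactly: the paper gives no detailed argument, merely remarking that the lemma ``follows from the finiteness of the set of building blocks and property (C),'' and your write-up is precisely the natural fleshing-out of that remark, using the dichotomy of Corollary~\ref{cor:matrix-properties} together with submultiplicativity of the norm and the observation that the uniform bound $N$ on the finitely many base combining matrices exceeds $1$.
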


\subsection{Column size bound}
We now begin to study the infinite matrix product $\prod A_i$. 
The first intermediate goal will be the following theorem.
\begin{thm}\label{thm:a-priori-column-bound}
  There is a number $K>0$ so that for all $n$ the ratio of the largest
  to the second smallest column of $(\prod_{i=1}^n A_i)$ is at most
  $K$.
\end{thm}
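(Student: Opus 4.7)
The plan is to induct on $n$, proving the stronger statement that there is a uniform constant $K$ such that for every $n$ the columns of $P_n = \prod_{i=1}^n A_i$ split into three columns of comparable size (pairwise ratios at most $K$) together with at most one small column of much smaller size; moreover, when such a small column exists, its index coincides with the idle column of the most recent $A_i$ of type (C) from Corollary~\ref{cor:matrix-properties}. The base case $n=0$ is trivial since $P_0 = I$ has all columns of size $1$.

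For the inductive step I would split on the type of $A_{n+1}$. In case (S) the matrix has bounded norm, so all column sizes change by a bounded factor, preserving the hypothesis with an updated constant. In case (C), with $A_{n+1} = B^m$ for a combining $B$ with idle column $b$, the key observation is that row $b$ of $B^m$ is a scalar multiple of $e_b^\top$ (since the idle row of $B$ is $e_b^\top$ by definition of combining, and powers preserve this structure). Right-multiplication by $B^m$ therefore yields
\[
C_b(P_{n+1}) = B_{bb}^m\, C_b(P_n), \qquad C_l(P_{n+1}) = \sum_{j\neq b} (B^m)_{jl}\, C_j(P_n) \text{ for } l\neq b.
\]
By Lemma~\ref{lem:powers-of-combining}, the active-row entries $(B^m)_{jl}$ in the second formula are within a uniform factor $E$ across $l\neq b$ and grow at the Perron rate $\mu^m$, while the idle column grows at the strictly slower rate $B_{bb}^m$. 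Consequently $C_b(P_{n+1})$ becomes the new candidate small column, and the three non-$b$ columns of $P_{n+1}$ inherit comparable sizes from the non-idle columns of $P_n$ by combining the $E$-comparability of the coefficients with the inductive hypothesis.

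The delicate subcase is when $P_n$ already has a small column at some index $b'\neq b$. There the term involving $C_{b'}(P_n)$ contributes only a small amount, and I would argue that the dominant contributions from the remaining non-idle columns of $P_n$ suffice to keep the non-$b$ columns of $P_{n+1}$ comparable. If $B$ has $\tau\geq 3$ active columns this is automatic; if $\tau = 2$ and $b'$ happens to be an active index, only one non-small active column $j_0$ contributes nontrivially, but Lemma~\ref{lem:powers-of-combining}(ii) still gives that $(B^m)_{j_0 l}$ are comparable across $l\neq b$, so the non-$b$ columns of $P_{n+1}$ remain comparable. To ensure that $P_n$ itself does not simultaneously harbor two small columns, I would invoke the \textbf{Isolated Idle} property from Proposition~\ref{prop:existence-of-building-blocks}: when the building block sequence transitions from one combining loop to another, every column of the cumulative matrix gains at least two nonzero entries after a bounded number of further building blocks, so any earlier idle column is mixed into the others before a different loop starts.

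The main obstacle is ensuring the constant $K$ is uniform in $n$. This reduces to uniformity of $N$, $E$, $c$, and the spectral gap between $\mu$ and $B_{bb}$ across the combining matrices in play, all of which follow from the finiteness of the building-block set $\mathcal{B}$ guaranteed by Proposition~\ref{prop:existence-of-building-blocks}. With these uniform bounds in hand the induction closes with a single constant $K$ depending only on $\mathcal{B}$.
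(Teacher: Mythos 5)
Your choice of the controlled quantity is the right one: tracking three mutually comparable columns plus at most one exceptional small column is exactly why the paper bounds the ratio of the largest column to the \emph{second smallest} column $C_u$ rather than to the smallest. The identification of the small column with the idle column of the most recent combining block, and the use of \textbf{Isolated Idle} to rule out two simultaneous small columns, also match the paper's intent. However, there is a genuine gap in how the induction closes, concentrated in your case (S).

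You write that when $A_{n+1}$ has bounded norm, ``all column sizes change by a bounded factor, preserving the hypothesis with an updated constant.'' This is precisely the step that destroys uniformity: if each bounded-norm factor is allowed to degrade the comparability constant by a factor of (say) $4N$, then after $n$ such factors the constant is $(4N)^n$, and finiteness of $\mathcal{B}$ does nothing to prevent this --- a product of arbitrarily many matrices, each of norm at most $N$, can have column ratios growing without bound (consider powers of a single unipotent elementary matrix). The paper avoids this with two ingredients you do not use. First, \textbf{Almost Positivity} is invoked to regroup the product into blocks $D_k$ each of which is either almost positive with norm at most a fixed $M_c$, or contains at least an $N_c$-th power of a minimal depth $0$ loop; the constants $M_1,N_1,\dots,M_c,N_c$ are defined by a finite bootstrapping that your sketch omits entirely. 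Second, and crucially, Lemma~\ref{lem:improving-estimates} supplies a self-improving dichotomy: when an almost positive bounded-norm block is applied, either the ratio $|C_{\max}|/|C_u|$ \emph{does not increase}, or it is already at most a fixed constant $K_2$ (Lemma~\ref{lem:small-matrix-products}\,ii)); the analogous statement for large loop powers is Corollary~\ref{cor:loop for fixed}. It is this ``improve or already bounded'' alternative, not a per-step multiplicative loss, that lets the induction run over unboundedly many factors with a single constant $\max\{K,K_2\}$. Without it your argument proves only a bound that deteriorates with $n$. A secondary, smaller issue: your treatment of the combining case implicitly needs the power $m$ to be large relative to the norm of the material accumulated since the last controlled step (this is the role of the inductively defined thresholds $N_i$ in the paper); for small $m$ the block must instead be absorbed into the bounded-norm case, which again lands you on the missing dichotomy.
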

The proof requires several preliminary lemmas, beginning with the
following elementary estimate.
\begin{lem}\label{lem:improving-estimates}
  Let $a,b,c,d>0$ be four positive numbers, so that $b \leq Kd$. Then
  $$\frac{a+b}{c+d} \leq \frac{a}{c} \text{  or  } \frac{a}{c} \leq K$$
\end{lem}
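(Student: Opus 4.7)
The plan is a short computation based on the classical mediant inequality. The key observation is that
\[
\frac{a+b}{c+d} \leq \frac{a}{c} \quad \Longleftrightarrow \quad c(a+b) \leq a(c+d) \quad \Longleftrightarrow \quad \frac{b}{d} \leq \frac{a}{c},
\]
after multiplying out and cancelling $ac$. So the first alternative in the conclusion is really just the assertion $b/d \leq a/c$.

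Given this reformulation, I would split on whether $a/c \leq K$ or $a/c > K$. In the first case the second alternative holds and there is nothing to prove. In the second case, we use the hypothesis $b \leq Kd$, i.e.\ $b/d \leq K$, to conclude
\[
\frac{b}{d} \leq K < \frac{a}{c},
\]
which by the equivalence above gives $(a+b)/(c+d) \leq a/c$. Thus the first alternative holds.

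There really is no obstacle here; the lemma is a two-line elementary inequality. The only mild subtlety is recognizing the equivalence between $(a+b)/(c+d) \leq a/c$ and $b/d \leq a/c$, which is immediate from cross-multiplying (legal because $c,d>0$). This is precisely the mediant/weighted average interpretation: the fraction $(a+b)/(c+d)$ always lies between $a/c$ and $b/d$, so if $b/d$ is bounded above by $K$ and $a/c$ exceeds $K$, then $(a+b)/(c+d)$ must be dominated by $a/c$.
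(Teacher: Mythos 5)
Your proof is correct and is essentially the same argument as the paper's: both assume the second alternative fails (i.e.\ $a>Kc$) and use $b\leq Kd$ to deduce $cb<ad$, which after cross-multiplication is exactly $\frac{a+b}{c+d}\leq\frac{a}{c}$. The paper packages the same algebra as the factorization $\frac{a+b}{c+d}=\frac{a}{c}\cdot\frac{c+\frac{c}{a}b}{c+d}$ with second factor less than $1$, while you phrase it via the mediant inequality, but there is no substantive difference.
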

\begin{proof}
  Suppose the second estimate is not satisfied, that is $a > Kc$. Then
  $$\frac{c}{a}b < \frac{c}{Kc}Kd = d$$  
  Furthermore,
  $$\frac{a+b}{c+d} = \frac{a}{c} \frac{c+\frac{c}{a}b}{c+d} <
  \frac{a}{c}$$
  This shows the estimate.
\end{proof}

\begin{lem}\label{lem:small-matrix-products}
  Suppose $B$ is a $(4\times 4)$-matrix with non-negative entries. Let
  $A$ be a matrix whose entries are bounded by $K\geq 1$. 
  \begin{enumerate}[i)]
  \item Suppose that $A$ has all positive entries. Then
    $$\frac{|C_i(BA)|}{|C_j(BA)|} \leq K$$
    for all $i,j$.
  \item Suppose that $A$ has only positive entries in at least two rows,
    and the other rows are equal to rows of the identity matrix. Then
    $$\text{either } \max_{i,j}\frac{|C_i(BA)|}{|C_j(BA)|} \leq
    \frac{|C_i(B)|}{|C_j(B)|} \text{ or } \max_{i,j}\frac{|C_i(B)|}{|C_j(B)|} \leq K$$ 
  \end{enumerate}
\end{lem}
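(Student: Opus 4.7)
The plan is to unfold the column sum identity $C_i(BA)=\sum_k A_{k,i}C_k(B)$, write $c_k:=|C_k(B)|$, and control the ratios $|C_i(BA)|/|C_j(BA)|$ using Lemma~\ref{lem:improving-estimates}. For (i), every entry of $A$ is a positive integer bounded by $K$, so both $A_{k,i}$ and $A_{k,j}$ lie in $[1,K]$. Entrywise $A_{k,i}\leq K\,A_{k,j}$, and summing against the $c_k$ yields $|C_i(BA)|\leq K\,|C_j(BA)|$ immediately.

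For (ii), I would partition the row indices of $A$ into the set $S$ of positive rows (with $|S|\geq 2$) and the set $T$ of identity rows. For $k\in T$, let $\sigma(k)$ denote the column of the unique $1$ in that row. Decompose
\[|C_i(BA)|=P_i+Q_i,\qquad P_i:=\sum_{k\in S}A_{k,i}c_k,\qquad Q_i:=\sum_{k\in T,\,\sigma(k)=i}c_k.\]
By the argument of (i) applied to the positive rows alone, $P_i\leq KP_j$ holds unconditionally. The main step is to apply Lemma~\ref{lem:improving-estimates} with $(a,b,c,d)=(Q_i,P_i,Q_j,P_j)$: its hypothesis $b\leq Kd$ becomes $P_i\leq KP_j$, which is automatic. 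The lemma then returns $|C_i(BA)|/|C_j(BA)|\leq Q_i/Q_j$ or $Q_i/Q_j\leq K$. Since each $Q_i$ is a sum of certain column sums of $B$, the ratio $Q_i/Q_j$ is controlled by $\max_{k,l}|C_k(B)|/|C_l(B)|$, which gives the stated dichotomy.

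The main obstacle will be the degenerate cases where one of $Q_i,Q_j$ vanishes, blocking a direct application of the lemma. If $Q_i=0$, then $|C_i(BA)|/|C_j(BA)|\leq P_i/P_j\leq K$, giving the right-hand side of the dichotomy. If instead $Q_j=0$, then $|C_i(BA)|/|C_j(BA)|=(P_i+Q_i)/P_j\leq K+Q_i/P_j$, and I would exploit $|S|\geq 2$ together with the lower bound $P_j\geq\sum_{k\in S}c_k$ (at least two summands) to place $Q_i/P_j$ under the maximum column ratio of $B$, fitting the resulting estimate into the claimed dichotomy.
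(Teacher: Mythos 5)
Your decomposition $|C_i(BA)|=P_i+Q_i$ into the contributions of the positive rows and the identity rows is exactly the mechanism of the paper's proof (the paper writes $|C_j(BA)|\geq |C_{\pi(j)}(B)|+(|C_1(B)|+|C_2(B)|)$ and feeds this into Lemma~\ref{lem:improving-estimates} with $b=d=|C_1(B)|+|C_2(B)|$), and parts (i), the main case $Q_i,Q_j>0$, and the case $Q_i=0$ are all fine. You are in fact more careful than the paper in isolating the degenerate cases; note only that in the main case you need each nonzero $Q_i$ to be a \emph{single} column sum of $B$ (so that $Q_i/Q_j$ is literally one of the ratios $|C_k(B)|/|C_l(B)|$), which holds because the identity rows of a nonsingular $A$ are distinct.

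The gap is in your last case ($Q_j=0$, $Q_i>0$): the bound $|C_i(BA)|/|C_j(BA)|\leq K+Q_i/P_j\leq K+R$, where $R=\max_{k,l}|C_k(B)|/|C_l(B)|$, does \emph{not} fit the claimed dichotomy. If $R>K$ the second alternative fails, and $K+R>R$ so the first alternative is not implied; an additive error of $K$ per step would also destroy the inductive use of this lemma in Theorem~\ref{thm:a-priori-column-bound}. What $|S|\geq 2$ actually buys is the sharper estimate $Q_i/P_j\leq c_m/\sum_{k\in S}c_k\leq c_m/(2\min_{k\in S}c_k)\leq R/2$, whence $|C_i(BA)|/|C_j(BA)|\leq K+R/2\leq\max\{2K,R\}$ after splitting on whether $R\geq 2K$. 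Even this only yields the dichotomy with $2K$ in place of $K$, and that loss is unavoidable: taking $B$ with column sums $(2,2,3,2)$ and $A$ with two all-ones rows and identity rows $e_1^T,e_2^T$ gives $\max_{i,j}|C_i(BA)|/|C_j(BA)|=7/4$ while $R=3/2$ and $K=1$, so the statement as literally printed fails. (The paper's own proof elides exactly this case: its lower bound $|C_j(BA)|\geq|C_{\pi(j)}(B)|+|C_1(B)|+|C_2(B)|$ double-counts a column when $\pi(j)\in\{1,2\}$, i.e.\ precisely when $Q_j=0$.) The constant $2K$ is harmless everywhere the lemma is applied, but you should carry out the $R/2$ computation and state the conclusion with the corrected constant rather than asserting that $K+R$ "fits into the claimed dichotomy."
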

\begin{proof}
  \begin{enumerate}[i)]
  \item The $i$-th column of $BA$ is obtained by summing the columns
    of $B$ according to the entries of the $i$-th column of $A$. In
    such a linear combination each column of $B$ appears at least
    once, and at most $K$ times by the assumption on $A$.
    Thus, the $1$-norm of any column is at least equal to $|C_1(B)| +
    \ldots + |C_4(B)|$ and at most $K(|C_1(B)| + \ldots + |C_4(B)|)$. This
    shows i).
  \item Suppose for ease of notation that rows $3$ and $4$ of $A$ are
    the ones which are not positive (if only one non-positive row
    exists then the argument works analogously). By the assumption,
    every column of $BA$ is the sum some multiples of columns $1$ and
    $2$ of $B$ and possibly one copy of column $3$ of $4$. There is a 
    permutation $\pi$ so that the entry $\pi(i)$ of row $i$ is
    positive (as the matrix $A$ is nonsingular).
    We thus have for all $i,j$:
    $$|C_j(BA)| \geq |C_{\pi(j)}(B)| + (|C_1(B)| + |C_2(B)|)$$
    and 
    $$|C_i(BA)| \leq |C_{\pi(i)}(B)| + K(|C_1(B)| + |C_2(B)|)$$
    Thus, we have
    $$\frac{|C_i(BA)|}{|C_j(BA)|} \leq \frac{|C_{\pi(i)}(B)| + K(|C_1(B)| + |C_2(B)|)
    }{|C_{\pi(j)}(B)| + (|C_1(B)| + |C_2(B)|)}$$
    By Lemma~\ref{lem:improving-estimates} (applied for
    $b=d=|C_1(B)|+|C_2(B)|$ and $a=|C_{\pi(i)}(B)|, c=|C_{\pi(j)}(B)|$) the right
    hand side is between $\frac{|C_{\pi(i)}(B)|}{|C_{\pi(j)}(B)|}$ and $K$ which
    implies the lemma.
  \end{enumerate}
\end{proof}
Recall that an entry in a combining matrix $A$ is \emph{active}, if it
is in row $i$ and column $i$ of $A$ is active. 
\begin{defin}
  If $M$ is a matrix we denote by $C_u(M)$ denote the column of $M$
  whose norm is second smallest.
\end{defin}

\begin{lem}\label{lem:large-matrix-products} 
  There is are constants $\gamma<1$, $K$  with the following property.
  Suppose $A$ is a matrix corresponding to a $n$-th power of a minimal
  depth $0$ loop for $n\geq 3$ and $B$ is a non-negative matrix. Then 
  \[ \frac{|C_{\max}(BA)|}{|C_u(BA)|}\leq \max\left\{K, \frac{|C_{max}(B)|}{|C_u(B)|}\gamma^{n-2}\right\} \]
\end{lem}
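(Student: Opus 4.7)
The plan is to expand $|C_i(BA)| = \sum_j A[j,i]\, w_j$ where $w_j := |C_j(B)|$, and to exploit the structure of $A = A_0^n$ provided by Lemma~\ref{lem:powers-of-combining}, where $A_0$ is the combining matrix of the underlying minimal depth $0$ loop (it is combining by \textbf{Combining Loops}). Denote by $\mathcal{A}, \mathcal{P}, \mathcal{I}$ the indices of the active, passive, and idle columns of $A_0$ respectively (with $|\mathcal{I}| \leq 1$ and $\tau := |\mathcal{A}| \geq 2$), and write $w_{\max} = |C_{\max}(B)|$ and $R = w_{\max}/|C_u(B)|$.

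The key algebraic observation is that for $j \in \mathcal{P} \cup \mathcal{I}$, row $j$ of $A_0$ equals $e_j^T$: column $j$ being passive or idle means $j$ is not added to any other column, so $A_0[j,k]=0$ for $k\neq j$, while the diagonal entry is $1$. Consequently row $j$ of $A = A_0^n$ is also $e_j^T$ for all $j \in \mathcal{P}\cup\mathcal{I}$, giving the explicit form
\[
|C_i(BA)| = \begin{cases} G_i & i \in \mathcal{A} \\ G_i + w_i & i \in \mathcal{P} \\ w_i & i \in \mathcal{I} \end{cases},
\qquad G_i := \sum_{j \in \mathcal{A}} A[j,i]\, w_j.
\]
Next I would apply Lemma~\ref{lem:powers-of-combining}(ii) to conclude that any two active entries of $A$ in columns from $\mathcal{A}\cup\mathcal{P}$ differ by a factor of at most $E$, hence $G_p \leq E\, G_{p'}$ for all $p, p' \in \mathcal{A}\cup\mathcal{P}$. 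Setting $G := \max_{p \in \mathcal{A} \cup \mathcal{P}} G_p$, one has $G/E \leq G_p \leq G$. The lower bound
\[
G \;\geq\; \frac{\mu^n}{\tau E^2}\, \Sigma_\mathcal{A} \;\geq\; c'\, \mu^n\, |C_u(B)|
\]
follows from Lemma~\ref{lem:powers-of-combining}(i)-(ii) (which give each active entry $\gtrsim \mu^n/\tau$, where $\mu$ is the Perron--Frobenius eigenvalue of the active submatrix of $A_0$) together with $\Sigma_\mathcal{A} := \sum_{j\in\mathcal{A}} w_j \geq |C_u(B)|$ (only one position lies strictly below the second-smallest, so at least $\tau-1 \geq 1$ active positions carry $w$-value at least $|C_u(B)|$).

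Bounding now the max and second-smallest of $|C_i(BA)|$: the maximum is at most $G + w_{\max}$, while for the second-smallest, the three non-idle columns all satisfy $|C_i(BA)| \geq G/E$, and a short case analysis on where the at-most-one idle value $w_{i_0}\in[0,w_{\max}]$ falls in the sorted order of the four values shows $|C_u(BA)| \geq G/E$ in every case. Therefore
\[
\frac{|C_{\max}(BA)|}{|C_u(BA)|} \;\leq\; \frac{G + w_{\max}}{G/E} \;=\; E + \frac{E\, w_{\max}}{G} \;\leq\; E + \frac{E}{c'} \cdot \frac{R}{\mu^n}.
\]

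Since $\mathcal{B}$ is finite there are only finitely many minimal depth $0$ loops, so $E$, $c'$, and $\mu$ can be taken as uniform constants. One then sets $K$ to be a fixed multiple of $E$ and picks $\gamma < 1$ satisfying $\gamma^{n-2} \geq (E/c')\,\mu^{-n}$ for all $n \geq 3$: the bound $\max\{K, R\gamma^{n-2}\}$ covers, respectively, the regime where the constant term dominates and the regime where $R/\mu^n$ dominates. \textbf{The main obstacle} will be guaranteeing the existence of such $\gamma < 1$, which reduces (at $n=3$) to a uniform estimate of the form $E/c' < \mu^3$ on the combining matrices — this quantitative input has to be extracted from the explicit construction in Section~\ref{sec:concrete}. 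If it should fail in borderline cases, one can sharpen the second-smallest lower bound using Lemma~\ref{lem:powers-of-combining}(iii), whose exponential angle decay $\sim 1/\gamma'^n$ upgrades the residual factor $\mu^{-n}$ to a genuinely contracting one.
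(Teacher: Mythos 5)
Your proof is correct and follows essentially the same route as the paper: both arguments use Lemma~\ref{lem:powers-of-combining} to make every non-idle column of $BA$ comparable (up to the factor $E$) to a quantity $G$ that grows geometrically in $n$, and both isolate the idle/largest column of $B$, which contributes at most $w_{\max}$, as the sole source of the $R\gamma^{n-2}$ term. The obstacle you flag at $n=3$ is not genuine and needs no input from Section~\ref{sec:concrete}: since every active entry of $A_0$ in an active column is at least $1$, there are $\tau\geq 2$ active columns, and every passive column has an active column added to it, an easy induction gives that every active entry of $A_0^n$ in a non-idle column is at least $\tau^{n-2}\geq 2^{n-2}$; combined with $\Sigma_{\mathcal A}\geq|C_u(B)|$ this yields $|C_u(BA)|\geq 2^{n-2}|C_u(B)|$, hence the ratio is at most $E+R\,2^{-(n-2)}$, and any $\gamma\in(\tfrac12,1)$ works after splitting on whether $R\leq E\,\gamma^{-(n-2)}/(\,(2\gamma)^{n-2}-1\,)$ (absorb into $K$) or not (use the $R\gamma^{n-2}$ branch). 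Your proposed fallback via part (iii) of Lemma~\ref{lem:powers-of-combining} would not help, since angle decay between columns says nothing about ratios of column norms; the paper resolves the same point exactly as above, by the exponential growth of the minimal active entry together with finiteness of the set of loops.
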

\begin{proof}
  By finiteness of the set $\mathcal{B}$ of building blocks, it
  suffices to show the statement for each matrix $A$ corresponding to
  a power of a minimal depth $0$ loop, and then let $K, \, \gamma$ be the largest of
  the resulting constants.  We argue similarly to the proof of case
  (2) of Lemma~\ref{lem:small-matrix-products}. Again, for simplicity
  of notation, assume that 1 and 2 are the active columns, 3 is
  passive and 4 is passive or idle. 

  Let $E$ be the constant from Lemma~\ref{lem:powers-of-combining}
  applied to the matrix corresponding to the depth--$0$ loop detemined
  by $A$. Fix columns $i,j$.  Let $m$ be the smallest, and $M$ be the
  size of largest of the active entries in columns $i$ and $j$. By
  Lemma~\ref{lem:powers-of-combining} we have $M \leq Em$.  Thus for
  any $i,j \neq 4$ (if column $4$ is idle) or any $i,j$ (if column $4$
  is passive) we have
  $$|C_i(BA)| \geq |C_j(B)| + m(|C_1(B)| + |C_2(B)|)$$
  and 
  $$|C_i(BA)| \leq |C_j(B)| + Em(|C_1(B)| + |C_2(B)|).$$
  Arguing exactly as in case ii) of
  Lemma~\ref{lem:small-matrix-products} one shows that quotients
  between such $i,j$ either the ratio improves, or is less than some
  fixed constant.  Moreover, because the action of combining matrices on active columns gives a positive matrix, $m$ increases exponentially with the number of loops taken and 
  therefore we have that 
  \begin{equation}\label{eq:act pass}
  \frac{|C_{i}(BA)|}{|C_j(BA)|}\leq \max\left\{K', \frac{|C_a(B)|}{|C_b(B)|}\gamma'^{n-2}\right\}
  \end{equation}
  where $i,j,a,b\neq 4$.  This establishes the claim (with $K=K'$,
  $\gamma=\gamma'$) unless $C_{\max}(BA)$ is the idle column. Indeed,
  if $C_i(BA)=C_{\max}(BA)$ where $i$ is active or passive then we
  have
 $$\frac{|C_i(BA)|}{|C_u(BA)|}\leq \underset{j \text{ active or
     passive}}{\max} \frac{|C_{i}(BA)|}{|C_j(BA)|}\leq \max\left\{K',
   \max\frac{|C_a(B)|}{|C_b(B)|}\gamma'^{n-2}\right\}$$ 
$$\leq \max\left\{K', \max\frac{|C_{\max}(B)|}{|C_u(B)|}\gamma'^{n-2}\right\}.$$

  Hence, we are done unless column $4$ of
  $A$ is idle and column $4$ of $BA$ is not the smallest column. In
  that case, it is the same as the fourth column in $B$. By splitting
  $A$ into smaller powers $A= A_0^2A_0^k$, where $A_0$ is combining
  and $k\geq 0$ we see that multiplying $B$ by $A_0^2$ already adds
  both active columns of $B$ to all but the idle column. 
  If one of the
  active columns was larger than the idle column, this implies that
  the idle column of $A$ is the smallest of $BA$. In this case we may use the bound of $K'$ from above since both $C_u$ and $C_{\max}$ are active or passive.

  Thus, we are left with the case that the idle column of $B$ is
  larger than both of the active columns of $B$.  But then for all
  $i$, in $BA_0^{i+2}$ the size of the active columns grow exponentially in $i$ and so the ratio of the idle column to $C_u$ decays exponentially. This implies the lemma if $C_{\max}(BA)$ is the idle column. Otherwise it follows from equation \ref{eq:act pass} and the fact that $\frac{|C_{\max}(B)|}{|C_u(B)|}\geq \frac{|C_i(B)|}{|C_u(B)|}$ for all $i$.
\end{proof}

\begin{cor}\label{cor:loop for fixed} 
For any $M$ there exists $N$ so that if $\|D\|_{op}\leq M$ then
$$\frac{|C_{\max}(BDA^r)|}{|C_{u}(BDA^r)|}\leq \max\left\{K,\frac{|C_{\max}(B)|}{|C_u(B)|}\right\}$$ for any $A$, a matrix given by a minimal depth zero loop and $r\geq N$. Here, $K$ does not depend on $M$.
\end{cor}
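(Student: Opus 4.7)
The strategy is to reduce the statement to Lemma~\ref{lem:large-matrix-products} by absorbing $D$ into $B$: set $B' = BD$, apply the lemma to $B'A^r$, and control how much the pre-factor $D$ can distort the ratio $|C_{\max}(\cdot)|/|C_u(\cdot)|$. The lemma directly yields
\[ \frac{|C_{\max}(BDA^r)|}{|C_u(BDA^r)|} \leq \max\left\{K,\; \frac{|C_{\max}(BD)|}{|C_u(BD)|}\gamma^{r-2}\right\}, \]
so it suffices to (i) bound the ratio for $BD$ in terms of the ratio for $B$ and $M$, and (ii) choose $r$ large enough that the contraction factor $\gamma^{r-2}$ absorbs this bound.

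For step (i), the plan is to prove
\[ \frac{|C_{\max}(BD)|}{|C_u(BD)|} \leq 4M \cdot \frac{|C_{\max}(B)|}{|C_u(B)|}. \]
The numerator bound is immediate: $\|D\|_{op}\leq M$ forces every entry of $D$ to satisfy $|d_{ij}|\leq M$, so each column sum of $D$ is at most $4M$, giving $|C_j(BD)| = \sum_i d_{ij}|C_i(B)| \leq 4M|C_{\max}(B)|$. The denominator bound is the main subtle point. Here one uses that in our setting $D$ arises as a product of Rauzy matrices, and is therefore a non-singular matrix with non-negative integer entries. Letting $k$ denote the index of the smallest column of $B$, define
\[ S = \{j : d_{ij} = 0 \text{ for every } i\neq k\}. \]
Every column of $D$ with index in $S$ is a positive integer multiple of $e_k$, so any two such columns are proportional; non-singularity of $D$ therefore forces $|S|\leq 1$. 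For each $j\notin S$ there is some $i\neq k$ with $d_{ij}\geq 1$, and hence
\[ |C_j(BD)| \geq d_{ij}|C_i(B)| \geq |C_i(B)| \geq |C_u(B)|, \]
the last inequality holding because column $i$ of $B$ is not the smallest. Thus at least three of the four columns of $BD$ have norm at least $|C_u(B)|$, forcing the second smallest to do the same: $|C_u(BD)|\geq |C_u(B)|$.

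For step (ii), combine the two bounds to get
\[ \frac{|C_{\max}(BDA^r)|}{|C_u(BDA^r)|} \leq \max\left\{K,\; 4M\gamma^{r-2}\cdot\frac{|C_{\max}(B)|}{|C_u(B)|}\right\}, \]
and choose $N = N(M)$ so that $4M\gamma^{N-2}\leq 1$, which is possible because $\gamma < 1$ and $M$ is fixed in advance. For all $r\geq N$ the factor in front of $|C_{\max}(B)|/|C_u(B)|$ is then at most $1$, yielding the desired bound. Note $K$ inherits its independence of $M$ from Lemma~\ref{lem:large-matrix-products}, as required. The one delicate point is the lower bound $|C_u(BD)|\geq|C_u(B)|$; this is where the structural assumption on $D$ (non-singularity with integer entries) really enters, beyond the pure norm bound $\|D\|_{op}\leq M$.
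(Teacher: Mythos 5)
Your proposal is correct and follows essentially the same route as the paper: apply Lemma~\ref{lem:large-matrix-products} to $B'=BD$, observe that $\frac{|C_{\max}(BD)|}{|C_u(BD)|}$ is at most a constant multiple (depending on $M$) of $\frac{|C_{\max}(B)|}{|C_u(B)|}$, and choose $N$ so that $\gamma^{N-2}$ absorbs that constant. The paper states the distortion bound for $BD$ as a one-line observation; your careful justification of the denominator bound $|C_u(BD)|\geq |C_u(B)|$ via non-singularity and integrality of $D$ is exactly the detail that observation suppresses.
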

\begin{proof}
  Observe that
  $\frac{|C_{\max}(BD)|}{|C_u(BD)|} \leq \|D\|
  \frac{|C_{\max}(B)|}{|C_u(B)|}$.
  Let $\gamma$ be as in Lemma \ref{lem:large-matrix-products}. Choose
  $N$ so that $\gamma^{N-2}M\leq 1$ and the corollary follows from
  Lemma \ref{lem:large-matrix-products}.
\end{proof}

\begin{proof}[Proof of Theorem~\ref{thm:a-priori-column-bound}]
  To show the theorem, we will first define a re-grouping
  $D_k = \prod_{i=i(k)}^{i(k+1)-1}A_i$ where $i(k)$ are chosen so that
  $i(k+1)-i(k)\leq c$ and so that the products $\prod_{i=1}^n D_i$ have
  the claimed property. We will then use this to show that the theorem holds.

  \smallskip To define the $D_i$, we argue inductively. To begin with,
  note that the set of all paths
  $B_1 \to \dots \to B_k$ so that no $B_i$ is at least a third power of
  a minimal depth $0$ building block loop and so that
  $k\leq c$ is finite. Thus, the set of operator norms 
  $\|M(B_1\to \dots \to B_k)\|$ of the
  corresponding matrices is bounded -- let $M_0$ be such a bound.

  Next, let $N_1$ be the number guaranteed by Corollary~\ref{cor:loop
    for fixed} applied to the bound $M=M_0$. In particular, this
  implies that if $A$ is any matrix, and
  $B_1\to \dots \to B_k \to P^n$ where $B_1\to \dots \to B_k$ is as
  above, $P$ is a minimal depth $0$ loop and $n\geq N_1$, then
  \[\frac{|C_{\max}(AM(b_1\to \dots \to b_k \to
    P^n))|}{|C_{u}(AM(b_1\to \dots \to b_k \to P^n))|}\leq
  \max\left\{K,\frac{|C_{\max}(A)|}{|C_u(A)|}\right\}\]

  Suppose now that $N_i, M_i$ are defined. We define $M_{i+1}$ to be a
  bound for the operator norm of matrices $M(B_1\to \dots \to B_k)$
  defined by sequences with $k\leq c$ and so that no $B_i$ is a
  minimal depth $0$ loop with power at least $N_i$.
  Then, define $N_{i+1}$ to be the output of
  Corollary~\ref{cor:loop for fixed} applied to $M=M_{i+1}$.

  Finally, let $K_2$ be the output of
  Lemma~\ref{lem:small-matrix-products} applied to the bound $M_c$. We
  will now describe the grouping $D_i$ which will satisfy the column
  bound for $\max(K,K_2)$.

  Namely, suppose that $D_i$ is defined for $i\leq n$ with the desired
  properties, and $A_N$ is the final matrix appearing in the product
  defining $D_n$. By Almost Positivity, for some number $k \leq c$ the
  matrix $A_{N+1}\dots A_{N+k}$ is Almost Positive (i.e. satisfies the
  property from Almost Positivity).  There are now two
  cases:
  \begin{enumerate}
  \item $\|A_{N+1}\dots A_{N+k}\| \leq M_c$. Then we put
    $D_{n+1} = A_{N+1}\dots A_{N+l}$ and note that by
    Lemma~\ref{lem:small-matrix-products} we have
    \[\frac{|C_{\max}(D_1\dots D_nD_{n+1})|}{|C_{u}(D_1\dots D_nD_{n+1})|}\leq
    \max\left\{K_2,\frac{|C_{\max}(D_1\dots D_n)|}{|C_u(D_1\dots D_n)|}\right\}\]
  \item $\|A_{N+1}\dots A_{N+k}\| > M_c$. Then at least one of the
    $A_j, N+1\leq j\leq N+k$ corresponds to at least a $N_c$--th power of
    a minimal depth $0$ loop. Let $r$ be the minimal such $j$. We put
    $D_{n+1} = A_{N+1}\dots A_{r-1} A_r$. Note that
    $\|A_{N+1}\dots A_{r-1}\| \leq M_c$ by minimality of $r$. Thus, by
    our choice of $N_c$ we have
    \[\frac{|C_{\max}(D_1\dots D_nD_{n+1})|}{|C_{u}(D_1\dots D_nD_{n+1})|}\leq
    \max\left\{K,\frac{|C_{\max}(D_1\dots D_n)|}{|C_u(D_1\dots D_n)|}\right\}\]
  \end{enumerate}
  Thus, in any case we have
    \[\frac{|C_{\max}(D_1\dots D_nD_{n+1})|}{|C_{u}(D_1\dots D_nD_{n+1})|}\leq
    \max\left\{\max\{K,K_2\},\frac{|C_{\max}(D_1\dots
        D_n)|}{|C_u(D_1\dots D_n)|}\right\}\]
    This shows inductively the existence of the desired column size
    bound for products $\prod D_i$. The desired bound on the
    products $\prod A_i$ follows simply because by construction the
    difference between $\prod A_i$ and a suitably chosen $\prod D_j$
    has operator norm at most $M_c$ by construction.
\end{proof}

\begin{cor}\label{cor:idle smallest}
  For any $\zeta$ there is an $L$ with the following property.  Let
  $B=\prod_{i=1}^NA_i$, and suppose that $A$ is at least the $L$--th
  power of a minimal depth $0$ loop. Then the idle column of $BA^L$
  has size at most $\zeta$ times the size of any active or passive
  column of $BA^L$.
\end{cor}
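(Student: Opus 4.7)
Plan: I will exploit the Perron--Frobenius structure of the combining loop matrix $P$ (for which $A = P^m$ with $m \geq L$) together with Theorem~\ref{thm:a-priori-column-bound}. Let $\mathcal{A}, \mathcal{P}, \{b\}$ denote the active, passive and idle indices of $P$; let $\mu>1$ be the Perron--Frobenius eigenvalue of the positive block $\hat{P}$; and let $\hat{W}$ denote the corresponding $L^{1}$-unit eigenvector, supported on $\mathcal{A}$ with each active entry bounded below by some $c_0 > 0$ depending only on $P$. By Lemma~\ref{lem:convergence-most-columns} and Lemma~\ref{lem:powers-of-combining} applied to $A^L = P^{mL}$: the idle column $v:=C_b(A^L)$ satisfies $|v|_1 \leq K_0$ independently of $L$; every active or passive column decomposes as $C_i(A^L) = |C_i(A^L)|_1(\hat{W}+\delta_i)$ with $|\delta_i|_1 \leq E\gamma'^{-mL}$; and $|C_i(A^L)|_1 \geq c_1 \mu^{mL}$ for $i \in \mathcal{A}\cup\mathcal{P}$.

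Next I would estimate both sides of the desired ratio. Since $v$ has non-negative entries summing to at most $K_0$,
\[ |C_b(BA^L)|_1 = \sum_j v_j \,|C_j(B)|_1 \leq K_0 \max_j |C_j(B)|_1. \]
For $i \in \mathcal{A}\cup\mathcal{P}$, the identity $C_i(BA^L) = |C_i(A^L)|_1 \, B(\hat{W}+\delta_i)$ combined with
\[ |B\hat{W}|_1 = \sum_{j\in\mathcal{A}} \hat{W}_j |C_j(B)|_1 \geq c_0 \max_{j\in\mathcal{A}} |C_j(B)|_1 \]
and the $1\!\to\!1$ operator-norm estimate $|B\delta_i|_1 \leq \max_j|C_j(B)|_1 \cdot |\delta_i|_1$ yields a lower bound proportional to $|C_i(A^L)|_1 \cdot \max_{j\in\mathcal{A}} |C_j(B)|_1$ up to an error that is exponentially small in $mL$.

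The critical step, and the only real obstacle, is passing from $\max_{j\in\mathcal{A}}|C_j(B)|_1$ to $\max_j|C_j(B)|_1$ uniformly over $B$. Here Theorem~\ref{thm:a-priori-column-bound} is essential: at most one column of $B$ has norm below $\max_j|C_j(B)|_1/K$, and since $|\mathcal{A}| \geq 2$, at least one active index escapes this single ``small'' column, giving $\max_{j\in\mathcal{A}}|C_j(B)|_1 \geq \max_j|C_j(B)|_1/K$. Absorbing the $\delta_i$-error into a factor of $2$ for $L$ large, I conclude
\[ \frac{|C_b(BA^L)|_1}{|C_i(BA^L)|_1} \leq \frac{2KK_0}{c_0\,|C_i(A^L)|_1} \leq \frac{2KK_0}{c_0 c_1 \mu^{mL}}, \]
which is smaller than $\zeta$ once $L$ is large enough (using $mL \geq L \to \infty$). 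Without the uniform column bound of Theorem~\ref{thm:a-priori-column-bound} the active columns of $B$ could a priori concentrate into a single small column and destroy the lower bound on $|B\hat{W}|_1$; that uniform column comparison is precisely what the theorem supplies.
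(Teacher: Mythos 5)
Your proof is correct and follows essentially the same route as the paper's: Theorem~\ref{thm:a-priori-column-bound} guarantees that some active column of $B$ is within a factor $K$ of the largest (hence of the idle) column, and then the exponential Perron--Frobenius growth of the non-idle columns under powers of the loop matrix, while the idle column stays fixed, drives the ratio below $\zeta$. Your write-up is in fact more quantitative than the paper's brief argument, but the two key ingredients and their roles are identical.
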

\begin{proof}
  By Theorem~\ref{thm:a-priori-column-bound} the quotient in size
  between the largest and second smallest column of $B$ is at most
  $K$.  If the idle column of $B$ is not already the smallest, then
  one of the active columns is has size at least $\frac{1}{K}$ times
  the size of the idle column. After $K$ of applications of $A$ at
  this active column was added to all non-idle columns, and the idle
  column has become the smallest. Since every further application of
  $A$ increases the size of every non-idle column, the desired $L$ exists.
\end{proof}

\subsection{Angle Contraction}
\begin{lem}\label{lem:contract again}
  There are constants $K'>0, \xi>1$ with the following property. 
  Let $B$ be a matrix so that
  $\underset{1\leq i,j \leq 4}{\max}\angle(C_i(B),C_j(B)))\leq  \alpha$, 
  $\max \, \left(\frac{|C_i(B)|}{|C_u(B)|}\right)\leq D$ and let $A$
  be the matrix defined by a minimal depth $0$ loop. Then 
  $$\underset{i,j \text{active or passive}}{\max}\sin
  \angle(C_i(BA^k),C_j(BA^k))<K'\frac{\sin(\alpha)}{\xi^k}.$$
\end{lem}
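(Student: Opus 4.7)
The plan is to combine the two hypotheses of the lemma. The assumption that all columns of $B$ make mutual angles at most $\alpha$ lets us write each column as a common unit direction $v$ plus a perturbation of size $O(\sin\alpha)$, while Lemma~\ref{lem:powers-of-combining}(iii) ensures that the active and passive columns of $A^k$ become proportional at an exponential rate. The mixing of $A^k$ will then cancel out most of these perturbations, leaving a residual angle between active/passive columns of $BA^k$ which decays exponentially and carries $\sin\alpha$ as a linear prefactor.

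Concretely, set $v = C_1(B)/|C_1(B)|$, so that $C_m(B) = |C_m(B)|(v+\eta_m)$ for vectors $\eta_m$ with $|\eta_m|\leq C_0\sin\alpha$. Expanding $C_i(BA^k) = \sum_m (A^k)_{mi}C_m(B)$ and dividing by $\Lambda_i := \sum_m (A^k)_{mi}|C_m(B)|$ yields
\[ C_i(BA^k)/\Lambda_i = v + u_i, \qquad u_i = \sum_m p_{mi}\eta_m, \]
where $p_{mi} := (A^k)_{mi}|C_m(B)|/\Lambda_i$ is a probability distribution in $m$. Since $v+u_i$ is a nonnegative vector with $|v+u_i|_1 = 1$, Cauchy--Schwarz forces $|v+u_i|_2 \geq 1/2$, and applying Lemma~\ref{lem:add vectors} to the identity $v+u_j = (v+u_i) + (u_j - u_i)$ gives
\[ \sin\angle(C_i(BA^k), C_j(BA^k)) = \sin\angle(v+u_i, v+u_j) \leq 2|u_j-u_i|_2 \leq 2|u_j-u_i|_1. \]

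It therefore remains to show that $|u_i - u_j|_1 \leq C\sin\alpha/\gamma'^k$ for active and passive $i,j$. Since $u_i - u_j = \sum_m (p_{mi} - p_{mj})\eta_m$, we have $|u_i - u_j|_1 \leq C_0\sin\alpha\cdot |p_i - p_j|_1$, reducing the problem to bounding $|p_i - p_j|_1$. This is done in two steps: first, letting $\widetilde{C}_i = C_i(A^k)/|C_i(A^k)|_1$, the standard comparison between angle and $\ell^1$-distance for nonnegative unit vectors combined with Lemma~\ref{lem:powers-of-combining}(iii) yields $|\widetilde{C}_i - \widetilde{C}_j|_1 \leq C_1 E/\gamma'^k$. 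Second, the projective weighting map $\widetilde{C} \mapsto (\widetilde{C}_m |C_m(B)|)_m/\sum_n \widetilde{C}_n|C_n(B)|$ sending $\widetilde{C}_i$ to $p_i$ is $\ell^1$-Lipschitz with constant at most $2D$; this is a direct algebraic manipulation exploiting $1 \leq |C_m(B)|/|C_u(B)| \leq D$. Combining, $|p_i - p_j|_1 \leq 2DC_1E/\gamma'^k$, which closes the argument with $\xi = \gamma'$ and $K' = 4C_0C_1DE$.

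The main obstacle is the $\ell^1$-Lipschitz estimate for the projective weighting map; this is where the hypothesis $|C_i(B)|/|C_u(B)| \leq D$ is crucial. Without a bound on the size ratios of the columns of $B$, the weighting could amplify small differences in $\widetilde{C}_i$ into large differences in $p_i$, destroying the cancellation that makes the final angle decay proportional to $\sin\alpha$ rather than merely to $1$. The proof then works uniformly in $\alpha$: the bound $|v+u_j|_2 \geq 1/2$ never degenerates, and the resulting constants depend only on $D$ and on the combining matrix $A$.
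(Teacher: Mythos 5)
Your overall strategy (decompose each normalized column of $BA^k$ as a common direction $v$ plus a perturbation $u_i$ that is a $p_{\cdot i}$-average of the $O(\sin\alpha)$ errors $\eta_m$, then show the averaging weights $p_{\cdot i}$ and $p_{\cdot j}$ converge exponentially) is different from the paper's, which instead splits $C_i(A^k)$ into a Perron--Frobenius part plus an exponentially small remainder and applies the Law of Sines in the image. Your route is viable, but there is a genuine gap at the step you yourself flag as the main obstacle: the claim that the weighting map $\widetilde{C}\mapsto p$, $p_m=\widetilde{C}_m|C_m(B)|/\sum_n\widetilde{C}_n|C_n(B)|$, is $\ell^1$-Lipschitz with constant $2D$ on the simplex. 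You justify this by ``$1\leq |C_m(B)|/|C_u(B)|\leq D$'', but $C_u(B)$ is by definition the \emph{second smallest} column, so the lower bound fails for the smallest column of $B$, which the hypothesis leaves completely unconstrained (this is deliberate: in the application the idle column of $B$ may be arbitrarily small relative to the others, cf.\ Corollary~\ref{cor:idle smallest}). The Lipschitz claim is then false as stated: with weights $w=(\delta,1,1,1)$ (so $D=1$) and probability vectors $x=e_1$, $y=(1-\epsilon)e_1+\epsilon e_2$, one gets $|p(x)-p(y)|_1\to 2$ while $|x-y|_1=2\epsilon$, with Lipschitz constant blowing up like $1/\delta$. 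The denominator $\langle w,\widetilde{C}_i\rangle$ in your computation is only bounded below by the \emph{smallest} weight in general, not by $\max_m w_m/D$.

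The gap is repairable, but it requires an input you do not invoke: the specific vectors $\widetilde{C}_i$ to which the map is applied (normalized active or passive columns of $A^k$) each have at least \emph{two} entries bounded below by a constant fraction $\epsilon_0$ of their total mass. This follows from the combining structure — there are $\tau\geq 2$ active rows, the active entries of any active or passive column of $A^k$ are mutually comparable by Lemma~\ref{lem:powers-of-combining}(ii), and they carry a definite proportion of the column since the passive and idle rows of $A$ are rows of the identity. Since at most one index of $B$ has weight below $|C_u(B)|$, at least one of those two large entries sits at an index $m$ with $w_m\geq \max_n w_n/D$, giving $\langle w,\widetilde{C}_i\rangle\geq \epsilon_0\max_n w_n/D$ and hence a Lipschitz bound $2D/\epsilon_0$ \emph{on the relevant pairs of vectors}. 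With that insertion (and the usual adjustment of constants for $k\leq 3$, where Lemma~\ref{lem:powers-of-combining} does not apply), the rest of your argument — the $\ell^1$/angle comparisons, the use of Lemma~\ref{lem:add vectors}, and the restriction to active/passive indices — is correct.
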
 
\begin{proof}

Because there are only finitely many minimal depth $0$ loops in our
building blocks, it suffices to show the lemma for a single one.

The proof is similar to Lemma \ref{lem:powers-of-combining}.  For each
combining matrix $A$ we consider its action on the active columns,
described by a matrix $\hat{A}$.  It has a Perron-Frobenius
eigenvector $[a_1,...,a_\tau]$ where $2\leq \tau\leq 4$ and all the
$a_i>0$.  It has corresponding eigenvalue $\mu$. We consider the
action of $\hat{A}$ on the invariant $(\tau-1)$-dimensional subspace
not containing the Perron-Frobenius eigenvector
$[a_1,...,a_\tau]$. Call the matrix describing this action
$\tilde{A}$. There exists $C, \nu$ where $\nu<\mu$ and
$\|\tilde{A}^k\|_{op}\leq C\nu^k.$ Thus there exists $n_A$, $\gamma>1$
so that $\frac{\mu^{k}}{C\nu^k}\geq \gamma^k$ for all $k\geq n_A$. Write
$C_i(\tilde{A}_r)=\lambda [a_1,..,a_\tau]+w$ where $\lambda$ is chosen
to be the largest possible so that
$C_i(\tilde{A}_r)-\lambda [a_1,...,a_\tau]\in
\mathbb{R}^\tau_d$.
There exists $C$, $m_A$ so that for all $k>m_A$ with
$\frac{\lambda}{|w|}>C\gamma^k$. Indeed, decompose
$C_i(\tilde{A}_r)=\lambda'[a_1,...,a_\tau]+w'$ where $w'$ is in the
subspace $\hat{A}$ acts on. By above $\frac{\lambda'}{|w'|}>\gamma^k$
for all $k>n_A$. Now letting $\xi$ be the minimal number so that
$\xi[a_1,...,a_\tau]+w'\in \mathbb{R}^\tau_+$ for all large enough $k$
we have the claim.\footnote{In particular,
  $\xi\leq \frac{|w'|}{\min a_i}$}

Consider $A_r=A^k$ where $A$ is a combining matrix and $k\geq n_A$.
Consider the active columns $C_{i_1}(B),...,C_{i_\ell}(B)$ and the
vector $\bar{a}\in \mathbb{R}^4_+$ where $\bar{a}_i$ is 0 if $C_i$ is
not active and is $a_b$ where $b$ is the corresponding column of
$\hat{A}$ if is active.  Because $\hat{A}$ captures the action of $A$
on the active columns, $C_{i_b}(A_r)$ as $\lambda\tilde{a} + w_b$
where $\lambda$ is chosen to be the largest possible with
$C_{i_b}-\lambda\tilde{a}\in \mathbb{R}^4_+$ with
$\frac{|w_b|}{\lambda}\leq C' \frac 1 {\gamma^k}$.

Now $Bw_b \in B_\Delta$ and so
$\angle( w_b, \sum a_j C_{i_j}(B))<\alpha$.  It follows from Lemma
\ref{lem:add vectors} that
$\sin(\angle (\sum a_j C_{i_j}(B), C_{i_b}(BA_r))\leq
D\gamma^{-k}\sin(\alpha)$.
Hence the lemma follows for active columns. The case of passive column
is analogous to the proof of Lemma \ref{lem:powers-of-combining}
part(iii).
  
By choosing of $K'$ large enough we absorb the various constants and
are able to remove the requirement that $k\geq m_A,\,n_A$.
\end{proof}

We next introduce some notation. If $v,w$ are line segments in
$\mathbb{R}^3$ let $|w|$ denote the length of $w$ and $\Phi_v$ denote
orthogonal projection onto the direction of $v$.  We also need a fact
from Euclidean geometry:

\begin{lem}\label{lem:sublemma}
  There exist numbers $0<c_1,c_2<1$ with the following property.
  Let $\Delta'\subset\RR^3$ be a (not necessarily
  regular) Euclidean simplex with diameter $\tau$, and let $w$ be a
  line segment in $\Delta'$ with length $|w| \geq c_1\tau$.

  Then there is a an edge $e$ of $\Delta'$ with length at least $|w|$
  so that $|\Phi_e(w)|\geq c_2|w|$.
\end{lem}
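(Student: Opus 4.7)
The plan is to use the width of $\Delta'$ in the direction of $w$. Let $u = w/|w|$ be the unit vector along $w$, and let $v_1,\ldots,v_k$ be the vertices of $\Delta'$. Define the width of $\Delta'$ in direction $u$ by
\[ W(u) = \max_i \langle v_i, u\rangle - \min_i \langle v_i, u \rangle. \]
Since $w$ is parallel to $u$ and contained in $\Delta'$, its orthogonal projection onto the line $\RR u$ is contained in the projection of $\Delta'$ onto that line, so $|w| \leq W(u)$.

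The crucial observation is that $W(u)$ is a difference of linear functionals on $\Delta'$, so the max and min are attained at vertices, say $v_j$ and $v_k$. In a simplex (as opposed to a general convex polytope) \emph{any} two vertices are joined by an edge, so $e = [v_j, v_k]$ is an edge of $\Delta'$. This is the edge I would take.

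For the length condition, Cauchy--Schwarz gives
\[ |e| = |v_j - v_k| \geq \langle v_j - v_k, u\rangle = W(u) \geq |w|. \]
For the projection condition, the cosine of the angle between $u$ and $e$ is $W(u)/|e|$, so
\[ |\Phi_e(w)| = |w|\cos\angle(u,e) = |w|\cdot \frac{W(u)}{|e|} \geq |w|\cdot \frac{|w|}{\tau} \geq c_1 |w|, \]
using $W(u) \geq |w|$, the diameter bound $|e| \leq \tau$, and the hypothesis $|w| \geq c_1\tau$. Hence one may take $c_2 = c_1$; in particular $c_1 = c_2 = 1/2$ works.

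The only substantive step is identifying the correct edge via the width $W(u)$; after that, the two required inequalities collapse to a one-line Cauchy--Schwarz computation, so I expect no real obstacle. A mild worry might be degeneracy of $\Delta'$ (e.g., lower-dimensional inside $\RR^3$), but this changes nothing: the width is still realised by a pair of vertices of the (possibly lower-dimensional) simplex, and any such pair spans an edge by definition.
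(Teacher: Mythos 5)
Your proof is correct, and it takes a different route from the paper's. The paper translates $w$ to a parallel segment $\tilde w$ of at least the same length with one endpoint at a vertex $v$, and then argues that for $c_1$ close enough to $1$ some edge emanating from $v$ is both at least as long as $\tilde w$ and within one degree of its direction; the constants $c_1, c_2$ are therefore non-explicit and come from a compactness/pigeonhole consideration on the directions of the (at most three) edges at $v$. You instead take the edge $e=[v_j,v_k]$ joining the vertices realising the width $W(u)$ of $\Delta'$ in the direction $u$ of $w$ (legitimate, since in a simplex every pair of vertices spans an edge), and the two required inequalities $|e|\geq W(u)\geq |w|$ and $|\Phi_e(w)| = |w|\,W(u)/|e| \geq |w|^2/\tau \geq c_1|w|$ follow from Cauchy--Schwarz and the bound $|e|\leq\tau$. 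This is cleaner: it yields explicit constants (any $c_1\in(0,1)$ with $c_2=c_1$), avoids the sliding-to-a-vertex step and the one-degree approximation claim, both of which the paper leaves unjustified, and handles degenerate simplices with no extra work. The one assertion worth spelling out if you wrote this up is that the extrema of the linear functional $x\mapsto\langle x,u\rangle$ over $\Delta'$ are attained at vertices and that $W(u)>0$ whenever $|w|>0$, so that $v_j\neq v_k$ and $e$ is a genuine edge; both are immediate.
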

\begin{proof}  
  First observe that the diameter of a simplex is given by its longest
  side. Second, observe that if $w$ is is a line segment contained in
  $\Delta'$ then there exists a line segment $\tilde w$ contained in
  $\Delta'$, parallel to $w$, with $|\tilde{w}|\geq |w|$ and so that
  one endpoint of $\tilde{w}$ a vertex $v$ of $\Delta'$.  We thus have
  $|\tilde{w}| \geq c_1\tau$, and by choosing $c_1<1$ large enough
  there is some edge $e$ of $\Delta'$ emanating from $v$ with
  $|e| \geq |\tilde{w}|$ and whose direction is within 1 degree of the
  direction of $\tilde{w}$.  
  As a consequence, since $w$ and $\tilde{w}$ are parallel we have that 
  \[ |\Phi_e(w)|\geq \cos\left(\frac{\pi}{180}\right)|w| \]
  Thus, $c_2 = \cos(\frac{\pi}{180})c_1$ has the desired property.
\end{proof}

\begin{lem}\label{lem:distributing-angle-small} 
  For every $N$ there exists $F_1<1$ so that if $B$ is a product of
  matrices from Corollary~\ref{cor:matrix-properties},
  $A=A_{i}\cdots A_{i+k}$ is a subproduct satisfying $(P)$ or $(R)$
  with $k\leq c$ and $\|A\|_{\mathrm{op}} \leq N$. Then
  \[ \mathrm{diam}(BA\Delta) \leq F_1 \mathrm{diam}(B\Delta) . \]
\end{lem}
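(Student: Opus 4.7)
The plan is to express each vertex of $BA\Delta$ as an explicit convex combination of vertices of $B\Delta$ and then apply a coupling (total variation) estimate. Setting $\sigma_i = |C_i(B)|$ and $v_i = C_i(B)/\sigma_i$, so that the $v_i$ are the vertices of $B\Delta$, a direct calculation gives
\[ v_k^{BA} \;=\; \sum_i \beta_{ik} v_i, \qquad \beta_{ik} \;=\; \frac{A_{ik}\sigma_i}{\sum_j A_{jk}\sigma_j}. \]
Splitting each pair $\beta_{ik},\beta_{il}$ as $\min(\beta_{ik},\beta_{il}) + (\text{residual})$, the two residuals are disjointly supported non-negative vectors of equal total mass $\delta_{k,l} := 1 - \sum_i \min(\beta_{ik},\beta_{il})$. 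Normalising them produces two probability vectors, so $(v_k^{BA}-v_l^{BA})/\delta_{k,l}$ is a difference of two convex combinations of the $v_i$ and hence has norm at most $\mathrm{diam}(B\Delta)$. This gives
\[ |v_k^{BA}-v_l^{BA}| \;\leq\; \delta_{k,l}\cdot\mathrm{diam}(B\Delta), \]
and reduces the whole lemma to a uniform lower bound on $\sum_i \min(\beta_{ik},\beta_{il})$.

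Normalising $\sum_i\sigma_i=1$, the hypothesis $\|A\|_{\mathrm{op}}\leq N$ forces $A_{ij}\leq N$ and $Z_k := \sum_j A_{jk}\sigma_j\leq N$ for every $k$. In Case (P), every $A_{ij}\geq 1$, so $\min(\beta_{ik},\beta_{il})\geq \sigma_i/N$ and $\sum_i\min(\beta_{ik},\beta_{il})\geq 1/N$, which already gives $\delta_{k,l}\leq 1-1/N$. In Case (R), let $I_+$ denote the indices of the positive rows of $A$ (so $|I_+|=\tau\geq 2$) and $I_0$ those of the identity rows. Each identity row has a single $1$, so for $k\neq l$ the product $A_{ik}A_{il}$ vanishes for every $i\in I_0$; only indices in $I_+$ contribute, and for these the previous bound still applies, yielding $\sum_i\min(\beta_{ik},\beta_{il})\geq \bigl(\sum_{i\in I_+}\sigma_i\bigr)/N$.

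The main obstacle will be to bound $\sum_{i\in I_+}\sigma_i$ from below in Case (R), and here I would invoke Theorem~\ref{thm:a-priori-column-bound}: the ratio between the largest and the second-smallest column of $B$ is at most the universal constant $K$, so after normalisation at least three of the four $\sigma_i$ exceed some threshold $c_0=c_0(K)>0$. Since $|I_+|\geq 2$ and at most one index can correspond to the (possibly unique) smallest column, the set $I_+$ contains at least $\tau-1\geq 1$ index $i$ with $\sigma_i\geq c_0$, hence $\sum_{i\in I_+}\sigma_i\geq c_0$. Combining the two cases yields $\delta_{k,l}\leq 1-c_0/N$ uniformly in $B$, $A$, $k$, and $l$, and the lemma follows with $F_1 := 1 - c_0/N < 1$.
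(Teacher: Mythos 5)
Your argument is correct, and it reaches the conclusion by a genuinely different route than the paper. The paper first reduces the diameter bound to a statement about orthogonal projections onto the long edges of $B\Delta$ (via the Euclidean geometry fact in Lemma~\ref{lem:sublemma}), and then shows that every vertex of $BA\Delta$ is a convex combination of vertices of $B\Delta$ carrying a definite weight $\epsilon$ on a single common vertex $p_j$, where $j$ is a positive row of $A$ chosen with $|C_j(B)|\geq |C_u(B)|$. You instead run a standard Doeblin/total-variation coupling: writing $v_k^{BA}=\sum_i\beta_{ik}v_i$ and observing that $|v_k^{BA}-v_l^{BA}|\leq \bigl(1-\sum_i\min(\beta_{ik},\beta_{il})\bigr)\,\mathrm{diam}(B\Delta)$, so that a uniform lower bound on the overlap of the coefficient vectors suffices. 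The two proofs use exactly the same two substantive inputs -- the integrality and positivity structure of Almost Positive matrices (at least $\tau\geq 2$ positive rows, so at least one positive row avoids the unique possibly-small column) and the column-ratio bound $K$ of Theorem~\ref{thm:a-priori-column-bound}, which guarantees that at least three of the normalized column masses $\sigma_i$ are bounded below by $c_0(K)=1/(4K)$. What your version buys is the elimination of Lemma~\ref{lem:sublemma} and the edge-projection bookkeeping: the contraction factor $F_1=1-c_0/N$ comes out explicitly and the argument is entirely linear-algebraic. One small point worth making explicit is that the identity $|C_k(BA)|_1=\sum_i A_{ik}\sigma_i$ (which makes the $\beta_{ik}$ sum to $1$) uses that the paper works with the $1$-norm and that all vectors involved are non-negative; with that noted, the proof is complete.
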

\begin{proof}
  Consider the (normalized) simplices $B\Delta$ and $BA \Delta$ with
  vertices $p_1,...,p_4$ and $q_1,...,q_4$ respectively. These
  correspond to the directions of the columns $C_j(B)$ and
  $C_j(BA)$. Let $\tau$ be the length of the longest side of $B\Delta$
  (which is also its diameter) and $E$ be the set of sides of
  $B\Delta$ with length at least $c_1\tau$. 

  By Lemma~\ref{lem:sublemma} it suffices to show that there is a
  constant $0<c_3<1$ so that 
  \[ |\Phi_e(\overrightarrow{q_iq_j})|<c_3|e| \] 
  for all $i,j$ and $e\in E$.

  To prove this, let $e \in E$ be arbitrary. For concreteness, assume that 
  $e = \overrightarrow{p_1p_2}$. Let $j$ be such that every entry in row $j$ 
  of $A$ is positive, and $|C_j(B)|\geq| C_u(B)|$ (such a $j$ exists as at
  least two rows of $A$ are positive). Up to exchanging $p_1$ and $p_2$ we 
  may also assume that $|\Phi_e(\overrightarrow{p_1p_j})| \geq \frac{1}{2} |e|$.

  Note that 
  \[ C_i(BA)=\sum a_{k,i}C_k(B) \]
  where $a_{k,i}\leq n$ for all $k,i$ and $a_{j,i}\geq 1$ for all
  $i$. Thus, we have
  \[ q_i = \sum a_{k,i}\frac{|C_i(B)|}{|C_i(BA)|} p_k \]
  Since $\|A\| < N$ and $|C_j(B)|\geq| C_u(B)|$, there exists some
  $\epsilon>0$ (dependent only on $N$ and the constant $K$ from
  Theorem~\ref{thm:a-priori-column-bound}) so that
  $\frac{|C_j(B)|}{|C_j(BA)|} > \epsilon$.

  Thus, we have that
  \[ \Phi_e(q_i) = \sum a_{k,i}\frac{|C_i(B)|}{|C_i(BA)|}
  \Phi_e(p_k) \]
  In this expression, the coefficient of $\Phi_e(p_j)$ is at least
  $\epsilon$, and all coefficients are bounded from above by
  $N^2$. Since
  $|\Phi_e(\overrightarrow{p_1p_j})| \geq \frac{1}{2} |e|$, there is
  therefore some $c_3$ so that all $\Phi_e(q_i)$ are within $c_3|e|$ of
  $p_2$. This shows the claim.
\end{proof}

\begin{lem}\label{lem:distributing-angle-large} 
  There exists $N>0$ and $F_2<1$ so that it $B$ is a product of matrices from
  Corollary~\ref{cor:matrix-properties}, $A=A_{i}\cdots A_{i+k}$ is a
  subproduct satisfying $(P)$ or $(R)$ with $k\leq c$ and
  $\|A\|_{\mathrm{op}} > N$.  Then
  \[ \sin(\max \angle(C_i(BAA_{i+k+1}\cdot...\cdot A_{r+k+3}), 
C_j(BAA_{i+k+1}\cdot...\cdot A_{r+k+3})))\] \[\leq F\sin(\max\angle(C_i(B),C_j(B))). \]
\end{lem}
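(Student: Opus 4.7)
The plan is that the hypothesis $\|A\|_{\mathrm{op}} > N$ forces $A$ to contain a high power of a combining matrix corresponding to a minimal depth $0$ loop, on which Lemma~\ref{lem:contract again} provides exponential angle contraction among the active and passive columns. The three further matrices $A_{i+k+1}, A_{i+k+2}, A_{i+k+3}$ will then transfer this contraction to the (potentially problematic) idle column via the Isolated Idle property. In detail: by Corollary~\ref{cor:matrix-properties}, each factor $A_j$ of $A = A_i\cdots A_{i+k}$ is either bounded in norm (property (S)) or a power of a combining matrix (property (C)); since $k+1\leq c+1$, for $N$ sufficiently large Lemma~\ref{lem:loop enough} forces some factor to be $A_0^L$ with $A_0$ a combining matrix of a minimal depth $0$ loop and $L$ as large as we wish. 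I write $A = A'A_0^LA''$, choosing $A_0^L$ as the last such high-power factor so that the matrices after it ($A''$ together with $A_{i+k+1},A_{i+k+2},A_{i+k+3}$) can be controlled through the maximality of the regrouping from Section~\ref{sec:infinite-depth}.

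Second, I apply Lemma~\ref{lem:contract again} to the pair $(BA', A_0^L)$. Since columns of $BA'$ are non-negative combinations of columns of $B$, the maximum pairwise angle between them is at most $\alpha := \max_{p,q}\angle(C_p(B), C_q(B))$, and their size ratios are uniformly bounded by Theorem~\ref{thm:a-priori-column-bound}. Hence, for any active or passive indices $p, q$,
$$\sin\angle\bigl(C_p(BA'A_0^L), C_q(BA'A_0^L)\bigr) \leq \frac{K'}{\xi^L}\sin\alpha.$$
By Corollary~\ref{cor:idle smallest}, the idle column of $BA'A_0^L$ (if present) has size at most $\zeta$ times any active or passive column, with $\zeta$ arbitrarily small as $L \to \infty$.

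Third, I transfer the bound to all columns. Set $V := BA'A_0^L$ with columns $v_1,\ldots,v_4$ (with $v_{i_0}$ idle) and $Z := A''A_{i+k+1}A_{i+k+2}A_{i+k+3}$. By the Isolated Idle property applied to the loop $P$ giving $A_0$, the product $A_0^LZ$ corresponds to the loop $P$ iterated $L$ times followed by three building blocks leaving it, and each of its columns has at least two nonzero entries; since the idle index is unique, each column has a positive entry in some active or passive row. Thus for every $j$,
$$C_j(VZ) = w_j + Z_{i_0,j}v_{i_0}, \qquad w_j := \sum_{s\neq i_0} Z_{sj}v_s \neq 0,$$
and $w_j$ lies in the cone of the active/passive columns of $V$, giving $\sin\angle(w_j, w_l) \leq (K'/\xi^L)\sin\alpha$. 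The smallness of $|v_{i_0}|$ relative to $|w_j|$ combined with Lemma~\ref{lem:add vectors} and $\sin\angle(v_{i_0}, w_j) \leq \sin\alpha$ yields $\sin\angle(C_j(VZ), w_j) \leq C\zeta\sin\alpha$ for a constant $C$. By the triangle inequality for sines of small angles,
$$\sin\angle\bigl(C_j(VZ), C_l(VZ)\bigr) \leq \left(\frac{K'}{\xi^L} + 2C\zeta\right)\sin\alpha,$$
and choosing $L$ (equivalently $N$) sufficiently large makes the bracket smaller than a fixed $F_2<1$.

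The principal obstacle is the bookkeeping in step three: verifying that the chosen decomposition really sets up the Isolated Idle hypothesis cleanly, i.e.\ that $A_0^L$ is followed by a genuine sequence of three building blocks leaving the loop $P$, and that the constant $C$ controlling the idle contribution is uniform. The maximality of the regrouping at the start of Section~\ref{sec:infinite-depth} ensures that any matrix immediately following a loop power leaves the loop, so the count of three leaving building blocks is automatic; the uniformity of $C$ reduces to the finiteness of the building block set together with the a priori bound on column size ratios from Theorem~\ref{thm:a-priori-column-bound}.
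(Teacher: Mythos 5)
Your proposal follows essentially the same route as the paper's proof: locate a high power $A_0^L$ of a minimal depth $0$ loop inside $A$ via Lemma~\ref{lem:loop enough}, contract the active/passive columns with Lemma~\ref{lem:contract again}, make the idle column relatively small with Corollary~\ref{cor:idle smallest}, and then use Isolated Idle together with Lemma~\ref{lem:add vectors} and the triangle inequality for sines to propagate the contraction to all columns. The one place where your bookkeeping is looser than the paper's is the tail $Z=A''A_{i+k+1}A_{i+k+2}A_{i+k+3}$: the factors $A_{i+k+1},\dots,A_{i+k+3}$ may themselves be arbitrarily large powers of loops, so $\|Z\|_{\mathrm{op}}$ is \emph{not} uniformly bounded, and the uniformity of your constant $C$ does not follow from ``finiteness of the building block set'' or from Theorem~\ref{thm:a-priori-column-bound} as stated. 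What you actually need is that $Z_{i_0,j}/\sum_{s\neq i_0}Z_{sj}$ is uniformly bounded; this is true, but requires an extra line: write $Z=Z_0W$ where $Z_0$ is the product of $A''$ with only the first three building-block steps (so $\|Z_0\|$ is universally bounded and, by Isolated Idle, each column of $Z_0$ has a positive integer entry in some non-idle row), and note that each column of $Z$ is a non-negative combination of columns of $Z_0$, contributing at least $1$ to the non-idle rows for every $\|Z_0\|$ contributed to the idle row. The paper sidesteps this issue entirely by taking $A'$ to be only those first three building-block steps (hence of universally bounded norm), proving the contraction for $BA_sA'$, and then invoking the monotonicity $\sin\max\angle(C_i(XY),C_j(XY))\leq\sin\max\angle(C_i(X),C_j(X))$ (since $XY\Delta\subset X\Delta$) to absorb the rest of the product; adopting that truncation would close the loose end in your argument.
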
    
\begin{proof}
  The number $N$ will be given by Lemma \ref{lem:loop enough}, so that
  any $A$ with $\|A\| >N$ contains a matrix corresponding to at least
  $\ell$ powers of a minimal depth $0$ loop -- where $\ell$ will be
  chosen below.

  To begin, note that 
  \[ \sin(\max \angle(C_i(XY), C_j(XY)))
  \leq \sin(\max\angle(C_i(X),C_j(X))) . \]
  for any matrices $X,Y$ with non-negative entries (as
  $XY\Delta \subset X\Delta$). By the choice of $N$, there is an
  $s, i\leq s\leq i+k$ so that $A_s$ is the matrix defined by a
  minimal depth $0$ loop taken at least $\ell$ times. By the preceding
  remark it suffices to show that
  \[ \sin(\max \angle(C_i(BAA'), C_j(BAA'))) \leq
  F\sin(\max\angle(C_i(B),C_j(B))). \]
  where $A = A_s$ is a matrix corresponding to the $\ell$-th power of
  a combining matrix, and $A'$ is a matrix corresponding to three
  building block steps. We emphasize that this is necessarily the
  product of three $A_i$, but may be a smaller product (if some of the
  $A_i$ correspond to loops). In particular, we may assume that
  $\|A'\|_\mathrm{op} \leq L$ for some universal $L$.

  For simplicity of notation, we abbreviate
  $\beta = \sin(\max\angle(C_i(B),C_j(B)))$. By
  Lemma~\ref{lem:contract again} and our choice of $\ell$ we then have
  that
  \[ \underset{i,j \text{active or passive}}{\max}\sin
  \angle(C_i(BA),C_j(BA)) < K'\frac{\beta}{\xi^k} \]
  where active and passive refers to the columns of $A$. Let $b$
  denote the index of the bad column of $A$.

  Next, consider any column $C_k(BAA')$ and write it as a sum of columns
  of $BA$:
  \[ C_k(BAA') = a'_{kb}C_b(BA) + \sum_{i\neq b}a'_{ki}C_i(BA)\]
  
  By Isolated Idle every column of $A'$ has at least two nonzero
  entries, and thus $\sum_{i\neq b}a'_{ki} \geq 1$.

  Furthermore, the sum $\sum_i a'_{ki}$ is at most
  $\|A'\|_{op}\leq L$. Thus, we have
  \[ \frac{\|a'_{kb}C_b(BA)\|}{\|\sum_{i\neq b}a'_{ki}C_i(BA)\|} \leq
  L\frac{\|C_b(BA)\|}{\|C_i(BA)\|} \]
  for some index $i$ corresponding to an active or passive
  column. Hence by Lemma~\ref{lem:add vectors}
  \[ \sin\angle\left( C_k(BAA'), \sum_{i\neq b}a'_{ki}C_i(BA)\right)\]\[ \leq
  L\frac{\|C_b(BA)\|}{\|C_i(BA)\|}\sin\angle\left(\sum_{i\neq
    b}a'_{ki}C_i(BA), a'_{bi}C_b(BA)\right) \leq
  L\frac{\|C_b(BA)\|}{\|C_i(BA)\|}\beta\]

  On the other hand, we have
  \[ \sin\angle\left( \sum_{i\neq b}a'_{ki}C_i(BA), \sum_{i\neq
    b}a'_{mi}C_i(BA)\right)\]\[ \leq \sin\max_{r,s
    \mathrm{active,passive}}\angle C_r(BA), C_s(BA) \leq
  K'\frac{\beta}{\xi^k} \]

  Since (absolute values of) angles satisfy the triangle inequality
  and $\sin(\theta+\phi)\leq \sin(\theta)+\sin(\phi)$ for all
  $0\leq \theta,\phi$ with $\theta,\phi<\frac \pi 2$ we then have

  \[ \sin(\angle(C_k(BAA'), C_m(BAA')))\] \[<
  L\frac{\|C_b(BA)\|}{\|C_i(BA)\|}\beta
+ K'\frac{\beta}{\xi^k} 
+ L\frac{\|C_b(BA)\|}{\|C_j(BA)\|}\beta\]
\[ \leq \left(L\frac{\|C_b(BA)\|}{\|C_i(BA)\|} +
  \frac{K'}{\xi^k} +
  L\frac{\|C_b(BA)\|}{\|C_j(BA)\|} \right) \beta \]

By Corollary~\ref{cor:idle smallest}, $\ell$ may be chosen large
enough so that $L\frac{\|C_b(BA)\|}{\|C_j(BA)\|} < \frac{1}{4}$ for any
active/passive index $j$, and by enlarging $\ell$ further, we may assume that
$\frac{K'}{\xi^k} < \frac{1}{4}$ as well. Then
\[ \sin(\angle(C_k(BAA'), C_m(BAA'))) < \frac{3}{4}
\sin(\max\angle(C_i(B),C_j(B))) \]
which shows the desired outcome.
\end{proof}

The following proposition will be used to imply
convergence, continuity and unique ergodicity at an infinite depth point.
\begin{prop}\label{prop:new crit}
Let $A_i$ be as above. Then $(\prod A_i)\mathbb{R}^4_+$ converges to a
single line. 
\end{prop}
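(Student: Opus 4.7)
The plan is to show that the Euclidean diameter of the projectivized simplex $(\prod_{i=1}^n A_i)\Delta$ tends to $0$ as $n \to \infty$. Combined with the fact that the cones $(\prod_{i=1}^n A_i)\mathbb{R}^4_+$ are nested and decreasing (since each $A_i$ has nonnegative entries), this will imply that their intersection is a single ray, as desired.

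First, I would partition the sequence $A_1, A_2, \ldots$ into consecutive blocks $G_1, G_2, \ldots$ of uniformly bounded length using Almost Positivity from Corollary~\ref{cor:matrix-properties}. Starting at $i_1 = 1$, for each $j$ there is a minimal $k_j \leq c$ so that $A_{i_j}\cdots A_{i_j+k_j}$ satisfies (P) or (R); set $G_j = A_{i_j}\cdots A_{i_j+k_j+3}$, appending three extra factors so that the hypothesis of Lemma~\ref{lem:distributing-angle-large} can be met, and let $i_{j+1} = i_j+k_j+4$. Since the original building block sequence is infinite, there are infinitely many such blocks, each of length at most $c+4$.

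Let $D_n$ denote the Euclidean diameter of $(\prod_{i=1}^n A_i)\Delta$, and let $S_n$ denote $\max_{i,j}\sin\angle(C_i,C_j)$ over the columns $C_i,C_j$ of $\prod_{i=1}^n A_i$. Both $D_n$ and $S_n$ are monotonically non-increasing in $n$: multiplying by a further nonnegative matrix on the right replaces each column with a nonnegative combination of the previous ones, so the projectivized simplex and the cone of columns can only shrink. Moreover, for a simplex of the form $M\Delta \subset \Delta$ the two quantities $D_n$ and $S_n$ are comparable up to a universal constant, since the vertices $C_k/|C_k|_1$ lie on the bounded standard simplex and their Euclidean separation is within a constant factor of $\sin\angle(C_i,C_j)$. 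For each block $G_j$, I would compare the operator norm of the initial subproduct $A_{i_j}\cdots A_{i_j+k_j}$ to the threshold $N$ of Lemma~\ref{lem:distributing-angle-large}: if the norm is at most $N$, Lemma~\ref{lem:distributing-angle-small} contracts $D$ by a factor $F_1<1$ across this subproduct; otherwise, Lemma~\ref{lem:distributing-angle-large} contracts $S$ by a factor $F_2<1$ across this subproduct together with the three appended factors.

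Thus each of the infinitely many blocks contracts at least one of $D_n, S_n$ by a uniform factor strictly less than $1$. By pigeonhole, some fixed one of these two quantities is contracted along an infinite subsequence of block endpoints; combined with monotonicity, that quantity tends to $0$, and by the universal comparability both do. The convergence $D_n \to 0$ then yields that $\bigcap_n (\prod_{i=1}^n A_i)\mathbb{R}^4_+$ is a single ray. The main obstacle here is precisely the coexistence of two a priori different measures of simplex size in the two contraction lemmas; this is resolved cleanly by the monotonicity of both measures under nonnegative matrix multiplication together with the pigeonhole argument sketched above.
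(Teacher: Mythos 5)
Your proposal is correct and follows essentially the same route as the paper's proof: group the infinite product into bounded-length subproducts satisfying (P) or (R), and apply Lemma~\ref{lem:distributing-angle-small} or Lemma~\ref{lem:distributing-angle-large} according to the operator norm of the block, so that one of the two size measures contracts by a definite factor infinitely often. The monotonicity, comparability and pigeonhole bookkeeping you supply at the end is precisely what the paper leaves implicit.
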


\begin{proof}
  To prove the proposition we inductively group the product
  $(\prod A_i)$ into sub-products of length at most $c$ that satisfy
  $(P)$ or $(R)$.  Depending on the norm of the next block we apply
  Lemma~\ref{lem:distributing-angle-small} or
  \ref{lem:distributing-angle-large} to conclude that either the
  diameter of the image of $\Delta$, or $\max\sin\angle$ decreases by
  a definite factor. Since one of these happens an infinite number of
  times, the proposition follows.
\end{proof}

\section{Connecting building block endpoints}\label{sec:connecting-building-blocks}
\begin{thm}
  Let $\mathcal{B}$ be a complete finite set of building blocks. and
  suppose that $S_1,S_2$ are respectively left and right endpoints for
  a building block. Then there exists a path of uniquely
  ergodic IETs connecting $S_1$ and $S_2$. 
\end{thm}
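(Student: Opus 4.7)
The plan is to construct the path $c : [0,1] \to \Delta_\pi$ as the limit of an iterative refinement procedure organized by a binary tree of building blocks. The root of the tree is the given building block $b_0 = (S_1, F, S_2)$, and by \textbf{Completeness} each node has both a left-compatible and a right-compatible child. A node at depth $n$ is associated with a dyadic subinterval of length $2^{-n}$: the midpoint of that subinterval is parameterized by the midpoint IET of the node's building block, translated into the original simplex via the composition of Rauzy matrices along the tree path from the root. Thus dyadic parameters $k/2^n$ with $0<k<2^n$ correspond to midpoints of building blocks at some depth (all uniquely ergodic by the definition of a building block), while $t=0$ and $t=1$ correspond to $S_1$ and $S_2$ respectively.

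For non-dyadic $t \in (0,1)$, the binary expansion of $t$ determines a unique infinite L/R sequence and hence an infinite building block sequence $B_1 \to B_2 \to \cdots$, which has infinite depth. I would set $c(t)$ to be the unique point of $\bigcap_n M(B_1 \to \cdots \to B_n)\Delta$. Proposition~\ref{prop:new crit} shows that the nested simplices $M(B_1 \to \cdots \to B_n)\Delta$ shrink to a single point (their columns converge projectively to a single line), and Theorem~\ref{thm:ue-criterion} then yields unique ergodicity of $c(t)$.

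Continuity at non-dyadic $t$ is immediate from Proposition~\ref{prop:new crit}: any $t'$ near $t$ shares a long initial L/R segment with $t$, so both $c(t)$ and $c(t')$ lie in the same shrinking simplex. Continuity at dyadic $t$ (and at $0,1$) requires Lemma~\ref{lem:finite-depth-convergence}: a sequence $t_k \to t$ approaching from one side gives building block sequences that share an initial segment with $t$'s sequence, then follow the ``always L'' (or ``always R'') child rule for many steps, before branching off arbitrarily. Since $\mathcal{B}$ is finite, the ``always L'' (or R) sequence from any starting block becomes eventually periodic, forming a minimal depth-$0$ loop $P$. Hence the approaching sequences take the form $P_0 \to P^{n_k} \to Q_k$ with $n_k \to \infty$ and $Q_k$ of length $3$ leaving $P$, which is precisely the situation handled by Lemma~\ref{lem:finite-depth-convergence}. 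The endpoint of $P$ coincides with the dyadic vertex $c(t)$ by the way compatibility is set up: at every depth, the relevant ``left (resp. right) vertex'' of the refined segment adjacent to $t$ pulls back under all intermediate Rauzy matrices to $c(t)$ itself.

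The main obstacle is verifying continuity at the dyadic (finite-depth) points, since the branching tails $Q_k$ can be chosen freely and the convergence must hold uniformly across these choices. Here the full strength of Proposition~\ref{prop:existence-of-building-blocks} is required: the \textbf{Isolated Idle} property is what guarantees the ``at least two non-negligible coordinates'' bound needed to run the estimate at the heart of Lemma~\ref{lem:finite-depth-convergence}, while \textbf{Combining Loops} is what makes the $P^n$ iterations behave like powers of combining matrices and so produce the vector $V$ of column directions. Once this is in hand, a straightforward $\epsilon$-chasing argument combining the two regimes of continuity above completes the construction of the desired path of uniquely ergodic IETs from $S_1$ to $S_2$.
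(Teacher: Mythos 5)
Your proposal is correct and follows essentially the same route as the paper: a rooted bivalent tree of building blocks grown from $(S_1,F,S_2)$ via \textbf{Completeness}, with Proposition~\ref{prop:new crit} and Theorem~\ref{thm:ue-criterion} handling convergence, unique ergodicity and continuity at infinite-depth (non-dyadic) parameters, and Lemma~\ref{lem:finite-depth-convergence} (with the eventually-periodic one-sided tails forming a minimal depth-$0$ loop) handling continuity at the dyadic, finite-depth points. The only cosmetic difference is that the paper realizes $c$ as the limit of explicit polygonal approximations $c_n$ rather than defining $c(t)$ directly as the intersection of the nested simplices, but the two formulations are equivalent and rest on the same lemmas.
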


\begin{proof}
  Let $\pi=\pi(S_1)=\pi(S_2)$ be the permutation of the two points we
  want to connect.
  We will obtain the path $c$ connecting $S_1$ to $S_2$ in $\Delta_\pi$
  as the limit
  of approximating paths $c_n$. Before we define the actual paths $c_n$ though, we
  first construct a combinatorial object that will serve as a guideline
  on how to build the paths.

  We define a rooted oriented bivalent tree $\mathcal{T}$ in the following way. Each
  vertex will correspond to a building block $b \in \mathcal{B}$ and
  has two outgoing edges. The two vertices joined to $b$ are the left
  and right compatible building blocks to
  $b$ in $\mathcal{B}$. The root corresponds to the building block
  $(S_1,F,S_2)\in\mathcal{B}$, which exist by our hypothesis.
  
  Suppose $v$ is a vertex of $\mathcal{T}$. Then there is a unique path $\rho$
  which joins the root of $\mathcal{T}$ to $v$. This path $\rho$ corresponds to
  a building block sequence, which in turn defines
  a Rauzy matrix $M(\rho)$. If $v$ corresponds to the building block
  $(T_1,F,T_2)$, then we say that the triple of IETs
  $(M(\rho)T_1,M(\rho)F,M(\rho)T_2) \in \Delta_\pi$ is defined by $v$.

  Consider the set $v_0,\ldots,v_{2^n}$ of all vertices of distance
  $n$ to the root of $\mathcal{T}$, and let $(T^{(i)}_1,F^{(i)}T^{(i)}_2)$
   be the triple of
  IETs defined by $v_i$. Note that (by construction) $T_2^{(i)} = T_1^{(i+1)}$.
  We define $c_n$ as the concatenation of straight line segments in
  $\Delta_\pi$ connecting $T_1^{(i)}$ to $T_2^{(i)}$ for all $i$, parametrized
  so that each straight segment is traversed with constant speed on an
  interval of length $2^{-n}$.

  Explicitly, the first path $c_0$ is simply the straight line segment connecting
  $S_1$ to $S_2$. The path $c_1$ is the concatenation of the straight line
  connecting $S_1$ to $F$ and $F$ to $S_2$, where $(S_1,F,S_2)$ is a
  building block in $\mathcal{B}$. 
  Similarly, $c_{i+i}$ is constructed by replacing each straight line
  segment in $c_i$ by a concatenation of two line segments according
  to the building block which (after the suitable number of Rauzy
  steps) has the same endpoints.

  Note that  by definition $c_i$ and $c_{i+1}$ agree at all $t$ which
  correspond to IETs defined by vertices of distance at most $i$ to the root
  of $\mathcal{T}$.

  We call any point $c_i(t)$ which corresponds to an IET defined by
  some vertex $v$ of $\mathcal{T}$ a \emph{problematic point} of $c_i$. 
  The depth of $c_i(t)$ is defined to be the depth of the 
  building block sequence corresponding to the unique path joining
  the root of $\mathcal{T}$ to $v$.

  Explicitly, this means that the boundary points $c_1(0), c_1(1)$ are
  of depth $0$. Suppose we have assigned a depth to all the
  problematic points of $c_i$. Then, each problematic point $p$ of
  $c_{i+1}$ is adjacent to two problematic points $p_l, p_r$ of
  $c_i$. If $d_l,d_r$ are the depths of these points, then the depth
  of $p$ is $\min(d_l,d_r) + 1$.

  By the definition of building blocks every problematic point
  corresponds to a uniquely ergodic IET. 
  
  For any $t\in[0,1]$ which is not a multiple of $2^{-m}$ for some
  $m$ we now claim that the sequence $c_n(t)$ converges in
  $\Delta_n$. Namely, suppose that $t \in [k2^{-M},
  (k+1)2^{-M}]$. Then for each $i>M$ the initial part of the Rauzy expansion of $c_i(t)$
  agrees with the Rauzy expansion of the building
  block sequence corresponding to a suitable vertex $v_M$ of distance $M$ to the
  root of the tree $\mathcal{T}$. Increasing $M$ to a $N>M$ it follows from the
  construction that $v_M$ is contained in the (unique) oriented path
  joining the root of the tree $\mathcal{T}$ to $v_N$. Let $\rho$ be the
  infinite oriented path in $\mathcal{T}$ which contains all $v_i$.
  Since $t$ is not a dyadic fraction the path
  $\rho$ cannot eventually only make left or only right turns (as that
  would, e.g. for left turns, mean that $k2^{-N}\leq t \leq
  k2^{-N}+2^{-M}$ for all $M$). 

  Thus, the path $\rho$ defines a building block sequence as in
  Section~\ref{sec:infinite-depth}. Therefore,
  Proposition~\ref{prop:new crit} implies that for any
  $\epsilon$ there is a $N>0$ such that for all $i>N$ the values
  $c_i(t)$ differ by at most a distance of $\epsilon$ (since they
  share the same initial segment of building blocks). This shows that
  $c_i(t)$ converges. 

  In fact, the same argument shows that 
  $\lim_{n\to\infty}c_n(t)$ is a uniquely ergodic IET by
  Theorem~\ref{thm:ue-criterion} and that the limiting function is
  continuous at such $t$.

  It remains to show continuity at the problematic points. We show 
  continuity using the limit formulation. By construction, points on 
  $c_n$ close to a depth $L$ point follow the expansion $P$ of the 
  endpoint for longer and longer times. But in this situation,
  Lemma~\ref{lem:finite-depth-convergence} shows that the normalized
  length vectors converge, and thus implies the desired continuity.
\end{proof}

\begin{cor}\label{cor:manypaths}
  Let $T_1, T_2$ be two $4$-IETs which each become the left or right
  endpoint of a building block after a finite number of Rauzy
  steps. Then $T_1$ and $T_2$ can be joined by a path of uniquely
  ergodic IETs.
\end{cor}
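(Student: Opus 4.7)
The plan is to reduce the statement to the preceding theorem via Rauzy pullback. Let $k_i \geq 0$ and $b_i = (U_i, F_i, V_i) \in \mathcal{B}$ be such that $R^{k_i}(T_i) = U_i$ (the case $R^{k_i}(T_i) = V_i$ is symmetric), and write $\pi = \pi(T_1) = \pi(T_2)$.

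First I would apply the preceding theorem to each $b_i$ to produce a path $\gamma_i$ of uniquely ergodic IETs in $\Delta_{\pi(b_i)}$ from $U_i$ to $V_i$. The projective linear map $\Phi_i$ induced by the matrix $M(T_i, k_i)$ is a continuous injection from $\mathring{\Delta}_{\pi(b_i)}$ onto the sub-simplex of $\Delta_\pi$ consisting of IETs whose first $k_i$ Rauzy steps agree with those of $T_i$; it sends $U_i$ to $T_i$, and since both Rauzy induction and its inverse preserve unique ergodicity, the composition $\Phi_i \circ \gamma_i$ is a path of uniquely ergodic IETs in $\Delta_\pi$ from $T_i$ to the IET $T_i' := \Phi_i(V_i)$.

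This reduces the problem to connecting $T_1'$ and $T_2'$. Both IETs still satisfy the hypothesis of the corollary, now associated to the endpoints $V_i$ (still via $k_i$ Rauzy steps). To close the argument I would use Transitivity to fix a single building block $b^* \in \mathcal{B}$ with $\pi(b^*) = \pi$ whose endpoints lie on opposite sides of the fail plane; applying the preceding theorem to $b^*$ yields a ``master'' path of uniquely ergodic IETs in $\Delta_\pi$. Completeness lets one navigate the compatibility tree rooted at $b^*$, so that finitely many iterations of the lifting construction above produce finite chains of paths connecting both $T_1$ and $T_2$ to this master path.

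The main obstacle is making this iteration terminate, that is, verifying that after finitely many pullback-lift steps both $T_i$ indeed arrive on the master path. The cleanest route, I expect, is to directly adapt the inductive subdivision-by-midpoint construction from the proof of the preceding theorem, taking $T_1$ and $T_2$ themselves as the ``root-level'' endpoints of the construction, and selecting each required midpoint on a failure plane as a Rauzy pullback of the midpoint of a suitable compatible block in $\mathcal{B}$. The convergence, continuity, and unique ergodicity analyses from Sections~\ref{sec:finite-depth} and~\ref{sec:infinite-depth} then carry over essentially verbatim, since they only depend on the building-block sequence generated and not on the specific root endpoints chosen.
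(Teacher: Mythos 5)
There is a genuine gap, and you have in fact located it yourself: neither of your two routes actually closes. The ``master path'' idea has no termination mechanism --- navigating the compatibility tree rooted at $b^*$ only produces endpoints lying deeper and deeper inside specific nested cells of the partitions $\mathcal{P}_k$, and there is no reason the points $T_i'$ (or any of their further images under your lifting step, which merely hops from one endpoint of a block to the other) ever land on one of those branches. Your fallback --- rerunning the subdivision-by-midpoint recursion with $T_1,T_2$ as root endpoints --- does not carry over ``essentially verbatim'': the recursion in the preceding theorem is driven entirely by the compatibility structure of $\mathcal{B}$. Each segment being subdivided is the pullback of the segment between the two endpoints of a block \emph{in $\mathcal{B}$}, its unique crossing with the relevant fail plane is that block's midpoint, and every estimate in Sections~\ref{sec:finite-depth} and~\ref{sec:infinite-depth} (uniform norm bounds, $\epsilon_0$, Combining Loops, Isolated Idle, continuity at problematic points via Lemma~\ref{lem:finite-depth-convergence}) uses finiteness of $\mathcal{B}$ and the fact that every triple in sight is a block from $\mathcal{B}$. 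The root triple $(T_1,F,T_2)$ is not a building block, ``compatible with'' is undefined for it, and the branches of the tree converging to $T_1$ follow the first $K$ steps of $T_1$'s own Rauzy expansion, which is not governed by $\mathcal{B}$. Tellingly, your argument never uses the hypothesis that $T_1,T_2$ become endpoints after finitely many Rauzy steps (the $k_i$ enter only in your first, non-reductive step); if the fallback worked verbatim it would prove far more than the corollary claims.

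The intended argument uses that hypothesis decisively. Take $K$ so large that both $T_1,T_2$ become building block endpoints within $K$ Rauzy steps, and consider the partition $\mathcal{P}_K$ of $\Delta_\pi$. Only finitely many fail planes of $\mathcal{P}_K$ separate $T_1$ from $T_2$; on each of them choose a point adapted to $\mathcal{B}$ (a pullback of building block data under the Rauzy matrix of the cell). Consecutive points of the resulting finite chain $T_1, F_1,\dots,F_m, T_2$ lie in the closure of a common cell of $\mathcal{P}_K$, so after applying the Rauzy induction common to that cell both become endpoints/midpoints of blocks in $\mathcal{B}$ with the same permutation, and the preceding theorem (applied downstairs and pulled back by the projective map of the cell, exactly as in your first step) joins them. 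Concatenating these finitely many paths proves the corollary. Your first reduction and your use of the Rauzy pullback are correct ingredients; what is missing is this finite chain through the separating fail planes of $\mathcal{P}_K$, which is where the finiteness hypothesis does its work.
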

\begin{proof}
  Let $K$ be large enough so that $T_1, T_2$ become endpoints after
  $K$ Rauzy steps. Now consider the triangulation $\mathcal{P}_K$ of
  $\Delta_\pi$ defined by the fail planes of Rauzy induction within
  the first $K$ steps, and
  choose endpoints of building blocks on all of the fail planes
  in $\mathcal{P}_K$ separating $T_1$ from $T_2$. Two such points
  which are not separated by a fail plane can be joined by a path as
  in the previous theorem, by applying Rauzy induction first. The
  Corollary follows by concatenating these finitely many paths.  
\end{proof}

To finish the proof of Theorem~\ref{thm:main-for-4}, we need to be able to
join IETs which are not themselves endpoints of building blocks. This
will be done with a limiting argument via the following lemmas.

\begin{lem}\label{limit path}
Let $(X,d)$ be a metric space, let $(p_i)$ be a sequence in $X$ and
$p_\infty\in X$ be so that 
\begin{enumerate}
\item $(p_i)$ converges to $p_{\infty}$
\item $p_i,p_{i+1}$ are path connected by a path $P_i \subset X$ and 
\item $\underset{i \to \infty}{\lim} \, diam (P_i)=0$.
\end{enumerate} Then $p_1$ is path connected to $p_{\infty}$.
\end{lem}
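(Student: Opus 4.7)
The plan is to construct an explicit path $P\colon [0,1]\to X$ from $p_1$ to $p_\infty$ by concatenating suitably rescaled copies of the given paths $P_i$ on a nested sequence of intervals accumulating at $1$, and then verify continuity at $t=1$ using the two hypotheses (convergence of $p_i$ and shrinking diameter of $P_i$).

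More concretely, for each $i\geq 1$ let $I_i = [1-2^{-i+1},\,1-2^{-i}]$, so that the $I_i$ tile $[0,1)$ and $I_i$ has length $2^{-i}$. I would reparametrize $P_i$ (which goes from $p_i$ to $p_{i+1}$) as a continuous map $\tilde P_i\colon I_i\to X$ with $\tilde P_i(1-2^{-i+1})=p_i$ and $\tilde P_i(1-2^{-i})=p_{i+1}$, and then define
\[
P(t) = \begin{cases} \tilde P_i(t) & t\in I_i,\\ p_\infty & t=1.\end{cases}
\]
Since consecutive intervals $I_i$ and $I_{i+1}$ meet at the point $1-2^{-i}$, and $\tilde P_i$, $\tilde P_{i+1}$ both take the value $p_{i+1}$ there, the pasting lemma gives continuity of $P$ on $[0,1)$.

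The only genuine point to check is continuity at $t=1$. Given $\varepsilon>0$, use hypothesis (1) to pick $N_1$ with $d(p_i,p_\infty)<\varepsilon/2$ for $i\geq N_1$, and hypothesis (3) to pick $N_2$ with $\mathrm{diam}(P_i)<\varepsilon/2$ for $i\geq N_2$; set $N=\max(N_1,N_2)$. Then for any $t\in[1-2^{-N+1},1)$ one has $t\in I_i$ for some $i\geq N$, so $P(t)\in\tilde P_i(I_i)=P_i$, while $p_i\in P_i$ as well, giving $d(P(t),p_i)\leq \mathrm{diam}(P_i)<\varepsilon/2$; combining with $d(p_i,p_\infty)<\varepsilon/2$ yields $d(P(t),p_\infty)<\varepsilon$. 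This establishes continuity at $1$, and since $P(0)=p_1$ and $P(1)=p_\infty$, the lemma follows.

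There is no real obstacle here — the statement is essentially the standard fact that a convergent sequence with shrinking connecting arcs can be assembled into a single path. The only thing to be careful about is the choice of parametrization so that both endpoints and the limit point are incorporated into a single continuous map; the geometric decay $2^{-i}$ is merely a convenient bookkeeping device and any summable sequence of interval lengths would work equally well.
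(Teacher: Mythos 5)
Your proof is correct and follows essentially the same approach as the paper: concatenating the reparametrized paths $P_i$ on the dyadic intervals $[1-2^{-i+1},1-2^{-i}]$, setting the value at $t=1$ to $p_\infty$, and using hypotheses (1) and (3) to verify continuity at $t=1$. Your write-up just fills in the $\varepsilon$-details that the paper leaves implicit.
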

\begin{proof} Let $\psi(t)=P_j(2^jt)$ where $t\in
  [1-2^{-j+1},1-2^{-j}]$ and $\psi(1)=p_{\infty}$ Because the $P_i$
  are continuous and $P_i(1)=P_{i+1}(0)$ $\psi$ is continuous
  everywhere except possibly (1). Notice by conditions (1) and (3)
  $\underset{t \to 1^-}{\lim}\psi(t)=p_{\infty}.$ So $\psi$ extends to
  a path connecting $p_1$ to $p_{\infty}$. 
\end{proof}

\begin{lem}\label{close nice} 
Let $\mathcal{B}$ be a finite complete set of building blocks.
Let $T$ be an IET so that $R^k(T)$ is defined for all $k$. Then there
exists a sequence of IETs $(S_i)$  so that
$\pi(R^k(S_j))=\pi(R^k(T))$ for all $k<j$ and either
$R^{i-1}(S_i)=R^{i-1}(S_{i+1})$ or they are left and right endpoints
from a building block triple.
\end{lem}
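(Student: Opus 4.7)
The plan is to build the sequence $(S_i)$ by following an infinite oriented path $\rho_T$ in the tree $\mathcal{T}$ of building blocks from the proof of Theorem~\ref{thm:main-for-4}. Since $\mathcal{B}$ is finite and complete, $\mathcal{T}$ is infinite and binary-branching: each vertex has a distinguished left-compatible and right-compatible child in $\mathcal{B}$. Because $R^k(T)$ is defined for all $k$, the path $\rho_T$ is unambiguously determined: at each depth $n$ the midpoint $Y_n$ of the lifted building block lies on a fail plane not containing $T$, and we descend to the child whose lifted sub-simplex contains $T$.

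Let $r_n$ be the cumulative Rauzy step count from the root to depth $n$ along $\rho_T$ (with $r_0 = 0$), denote by $X_n^L, Y_n, X_n^R$ the lifted left endpoint, midpoint, and right endpoint of the depth-$n$ building block, and by $d_n \in \{L,R\}$ the direction of the $n$-th edge. The key structural observation is that, by the compatibility equation $L(F) = M(T_1, k_1)L(S_2)$, going left at depth $n$ preserves $X_n^L$ and forces $X_{n+1}^R = Y_n$ (and symmetrically when going right). Define the \emph{witness} $W_n := X_n^{d_{n+1}}$, the lifted endpoint on $T$'s side of the depth-$n$ midpoint. Then $W_{n+1} = W_n$ when $d_{n+1} = d_{n+2}$, and $W_{n+1} = Y_n$ otherwise; either way $W_n$ has the same sequence of Rauzy permutations as $T$ through step $r_{n+1}$, interpreted by the shadow procedure of Section~\ref{sec:shadows} when $W_n$ is itself a previous midpoint.

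I then set $S_1 := W_0$ and, for $i \geq 2$, $S_i := W_{n(i)}$, where $n(i)$ is the unique integer satisfying $r_{n(i)} + 2 \leq i \leq r_{n(i)+1} + 1$. Within a batch of indices the value $S_i$ is constant, so the ``equal'' alternative $R^{i-1}(S_i) = R^{i-1}(S_{i+1})$ holds trivially. At a batch boundary $i = r_{n+1}+1$, $i+1 = r_{n+1}+2$: if $d_{n+1} = d_{n+2}$ then $W_{n+1} = W_n$ and again the equal alternative holds; otherwise $W_{n+1} = Y_n$, and applying $R^{r_{n+1}}$ to $(W_n, Y_n)$ sends them (using the compatibility equation once more) precisely to the left and right endpoints of the depth-$(n+1)$ building block of $\mathcal{T}$, which lies in $\mathcal{B}$. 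Hence the ``building block triple'' alternative holds at every boundary.

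The main obstacle is that the witness $W_n$ can itself be a previous midpoint $Y_m$, so that ordinary Rauzy induction of $S_i$ formally fails at step $r_m + 1$. Verifying $\pi(R^k(S_i)) = \pi(R^k(T))$ for all $k < i$ then requires interpreting $R^k(S_i)$ via the shadow procedure of Section~\ref{sec:shadows}: at the failure one continues Rauzy in the direction dictated by $T$'s side, obtaining IETs with a zero-length interval whose $4$-IET permutations still match $\pi(R^k(T))$. The technical core of the proof is tracking this shadow-extended expansion and checking that it remains consistent with $T$ throughout the required range $k < i \leq r_{n(i)+1} + 1$.
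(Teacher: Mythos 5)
Your construction does not follow the paper's route, and it has a genuine gap at its first step. You propose to descend through the compatibility tree $\mathcal{T}$ by ``descending to the child whose lifted sub-simplex contains $T$,'' but that descent is not well-defined. The domain of the left child of a block $(T_1,F,T_2)$ is $M(T_1,k_1)\Delta$ — the set of IETs whose first $k_1$ Rauzy steps agree with those of $T_1$ — and the compatibility definition only guarantees that the \emph{segment} from $T_1$ to $F$ lies in this set. Since the composite paths defining compatibilities have length $k_1>1$ (e.g.\ $\alpha_1$ has three steps, $\alpha_2$ has five), the union of the two children's domains is a proper subset of $\Delta_\pi$: a generic $T$ on $T_1$'s side of the fail plane diverges from $T_1$'s Rauzy expansion before step $k_1$ and lies in \emph{neither} child's lifted sub-simplex. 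So $\rho_T$ does not exist for general $T$, and with it the witnesses $W_n$. A secondary problem is that even where your scheme runs, a direction swap makes $W_{n+1}$ a lifted \emph{midpoint} $Y_n$, whose Rauzy induction genuinely fails at step $r_n+1$; the conclusion $\pi(R^k(S_j))=\pi(R^k(T))$ presupposes $R^k(S_j)$ is defined, so these witnesses do not satisfy the lemma as stated, and you explicitly defer the shadow-procedure repair that would be the technical core of your argument.

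The paper's proof avoids the tree entirely and is local in $T$'s own expansion: by Transitivity/Completeness, at each vertex $\pi_i=\pi(R^{i-1}(T))$ of the Rauzy diagram there is a block in $\mathcal{B}$ whose two endpoints take the two different forward edges. One takes the endpoint on the same side of the fail plane as $R^{i-1}(T)$ and transports it back by $M(T,i-1)$ to define $S_i$. Then $S_i\in M(T,i-1)\Delta$, so by Lemma~\ref{region} its first $i-1$ Rauzy steps (and permutations) agree with those of $T$ automatically — no shadow procedure, no failure of induction in the required range — and consecutive terms are compared inside the common one-step sub-simplex at $\pi_i$, where they are endpoints of the chosen building block triple. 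If you want to salvage your approach you would need to prove that $T$ can be trapped in an infinite nested sequence of lifted building-block domains, which is essentially the full strength of Theorem~\ref{thm:main-for-4} rather than an input to it.
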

\begin{proof}
  Because $\mathcal{B}$ is complete, for each
  vertex of the Rauzy diagram there is a corresponding building block where
  the right hand side takes one (forward pointing) edge and the left
  hand side takes the 
  other. Thus, we can follow the expansion of $T$ and choose the
  desired building blocks. 
\end{proof}

\begin{thm}\label{thm:main4} 
  The set of uniquely ergodic unit length $4$-IETs with
  permutation in the Rauzy class of $(4321)$ is path connected.
\end{thm}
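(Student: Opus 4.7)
Fix once and for all a reference IET $T_0$ which is, say, the left endpoint of the root building block of $\mathcal{B}$. The plan is to show that every uniquely ergodic $4$-IET $T$ with permutation in the Rauzy class of $(4321)$ can be joined to $T_0$ by a path of uniquely ergodic IETs, which by concatenation yields the theorem.

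First, since $T$ is uniquely ergodic it is in particular minimal, so $R^k(T)$ is defined for every $k \geq 1$. Apply Lemma~\ref{close nice} to obtain a sequence of IETs $(S_i)$ such that $\pi(R^k(S_j))=\pi(R^k(T))$ for all $k<j$, and such that for each $i$ the pair $S_i,S_{i+1}$ are either equal after one Rauzy step, or are the endpoints of a common building block triple (after applying the appropriate number of Rauzy steps). In either case, Corollary~\ref{cor:manypaths} (or its proof) provides a path $P_i$ of uniquely ergodic IETs joining $S_i$ to $S_{i+1}$. Moreover, by construction both $S_i$ and $S_{i+1}$ share the first $i-1$ steps of Rauzy induction with $T$, and the path $P_i$ itself can be taken to lie in the Rauzy simplex $M(T,i-1)\Delta$: indeed, the construction in Corollary~\ref{cor:manypaths} first applies $i-1$ steps of Rauzy induction, then builds a path in the smaller normalised simplex, then pushes it back via $M(T,i-1)$.

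Since $T$ is uniquely ergodic, the nested simplices $M(T,n)\Delta$ shrink to the single point $\{L(T)\}$ (this is the converse direction of Veech's criterion, which holds for minimal uniquely ergodic IETs because the space of invariant measures corresponds exactly to $\cap_n M(T,n)\Delta$). In particular, both the sequence $(S_i)$ converges to $T$ and $\mathrm{diam}(P_i)\to 0$. The hypotheses of Lemma~\ref{limit path} are therefore satisfied, which produces a continuous path from $S_1$ to $T$ that stays inside the union $\bigcup_i P_i$ of paths of uniquely ergodic IETs, together with the limit point $T$ itself, which is uniquely ergodic by assumption.

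Finally, since $S_1$ is the endpoint of a building block, Corollary~\ref{cor:manypaths} provides a path of uniquely ergodic IETs from $S_1$ to $T_0$. Concatenating with the path built above produces a path of uniquely ergodic IETs from $T_0$ to $T$. The main obstacle in the above outline is the diameter-collapse statement $\mathrm{diam}(P_i)\to 0$: it rests on the fact that unique ergodicity of $T$ forces $\bigcap_n M(T,n)\Delta=\{L(T)\}$, so that the shared Rauzy prefix of $S_i$, $S_{i+1}$ and $T$ confines $P_i$ to an arbitrarily small region. Everything else is an application of results already established: Lemma~\ref{close nice} supplies the approximating sequence, Corollary~\ref{cor:manypaths} supplies the connecting paths, and Lemma~\ref{limit path} assembles them into a single continuous path.
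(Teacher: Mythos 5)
Your argument reproduces the first half of the paper's proof essentially verbatim: approximate $T$ by the sequence $(S_i)$ from Lemma~\ref{close nice}, connect consecutive terms inside $M(T,i)\Delta$ via Corollary~\ref{cor:manypaths}, and invoke the collapse of $\bigcap_n M(T,n)\Delta$ to a point together with Lemma~\ref{limit path}. That part is fine.

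The gap is the opening claim that unique ergodicity implies minimality implies ``$R^k(T)$ is defined for every $k$.'' The last implication is false. Rauzy induction fails at some stage exactly when the two critical intervals have equal length there, i.e.\ when $T$ has a connection between a discontinuity of $T$ and one of $T^{-1}$; what is equivalent to indefinite iterability of Rauzy induction is Keane's infinite distinct orbit condition, and minimality does \emph{not} imply i.d.o.c. A uniquely ergodic $4$-IET can carry such a connection (roughly, it behaves like a uniquely ergodic IET on fewer intervals with a spurious marked point), and for such a $T$ your sequence $(S_i)$ and the shrinking simplices $M(T,i)\Delta$ simply stop existing after finitely many steps. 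This is precisely why the paper includes Section~\ref{sec:shadows} and why the actual proof of Theorem~\ref{thm:main4} splits into two cases: the second case passes to an induced map $S|_J$ on fewer intervals where Rauzy induction is infinite, lifts its expansion back via the ``shadow'' construction, appends the positive paths $P_i$, and uses Corollary~\ref{cor:non idoc} in place of the $M(T,n)\Delta\to\{L(T)\}$ collapse to verify the hypotheses of Lemma~\ref{limit path}. As written, your proposal only proves path-connectivity of the uniquely ergodic $4$-IETs satisfying i.d.o.c., not of all of them.
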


\begin{proof}
Let $E$ by a uniquely ergodic $4$-IET with permutation $(4321)$ so
that $R^k(E)$ is defined for all $k$. 
As paths may be concatenated, it suffices to show that for each
uniquely ergodic $S$ with permutation $(4321)$ there is a path joining
it to $E$.

The first case we consider is that $S$ is a uniquely ergodic IETs that
has $R^k(S)$ defined for all $k$. Let $S_i$ be a sequence converging
to $S$ given by Lemma \ref{close nice}. By Corollary
\ref{cor:manypaths} for each $i$ there is a path, $l_i$ of uniquely
ergodic IETs that connect $S_i$ to $S_{i+1}$, and that stays in
$M(S,i)\Delta$. These satisfy condition (2) of Lemma \ref{limit
  path}. Because $S$ is uniquely ergodic and so $M(S,k)\Delta$
converges to a point, the $l_i$ satisfy condition (3) of Lemma
\ref{limit path}. For the same reason the endpoint of the $l_i$ are
converging to $S$ satisfying condition (1) of Lemma \ref{limit path}. 
Repeat the same for $E$ in place of $S$.
Thus, Lemma~\ref{limit path} yields the desired path.

We next consider the case of a uniquely ergodic IET $S$ that does not
have $R^k(S)$ defined for all $k$. There exists an interval $J$ so
that $S|_J$ has the minimal number of intervals of any induced map of
$S$. As an IET on fewer intervals, $S|_J$ has infinite Rauzy
induction. By Section \ref{sec:shadows} there exists a path in the Rauzy diagram $\mathcal{R}(4)$ whose actions on 
the relevant columns reflect the action on $S|_J$ on the appropriate
Rauzy class.  Choose once and for all finite Rauzy paths $P_i$ with
positive associated matrix for each permutation in the Rauzy diagram.

Now we choose a sequence $\hat{T}_i$ of building block endpoints converging to $T$ that share longer
and longer segments of this path and then are followed by one of the
$P_i$. We can connect the $\hat{T_i}$ and
satisfy condition (2) of Lemma \ref{limit path}. Following $T$ in Rauzy
induction leads to at least 3 columns pointing increasingly in the
direction of the length vector of $T$ and having increasing column
norm by unique ergodicity. By Corollary \ref{cor:non idoc} after the application of one of the $P$ all four columns point in roughly this direction. Also by Corollary \ref{cor:non idoc} the paths connecting $\hat{T}_i$ to $\hat{T}_{i+1}$ have diameter going to 0 as $i$ goes to infinity.\footnote{This is because at some point they have (at least) 3 large columns that almost point in $T$'s direction and all after that they follow building block matrices.}   By Corollary
\ref{cor:non idoc} the condition (1) of Lemma 
\ref{limit path} is satisfied. Because any IET in
$M(T_{n+k},n+k)\Delta$ also satisfies the assumptions of Corollary
\ref{cor:non idoc} condition (3) of Lemma \ref{limit path} is satisfied. 
\end{proof}

\section{Explicitly producing building blocks}
\label{sec:concrete}
In this section we prove
Proposition~\ref{prop:existence-of-building-blocks} by explicit
construction. The building blocks and their relations are depicted in
Figures~\ref{fig:diagram-a}, \ref{fig:diagram-b} and
\ref{fig:diagram-cd}. Unlabeled edges correspond to single Rauzy steps
(left and right corresponds to type a and b Rauzy induction. Labeled
edges of the graphs correspond to several common Rauzy steps (the
Rauzy steps corresponding to each path can be found in
Appendix~\ref{sec:composite-paths}). Edges ending in capital letters
lead to the corresponding blocks in a different diagram.
\begin{figure}
  \centering
  \includegraphics[width=\textwidth]{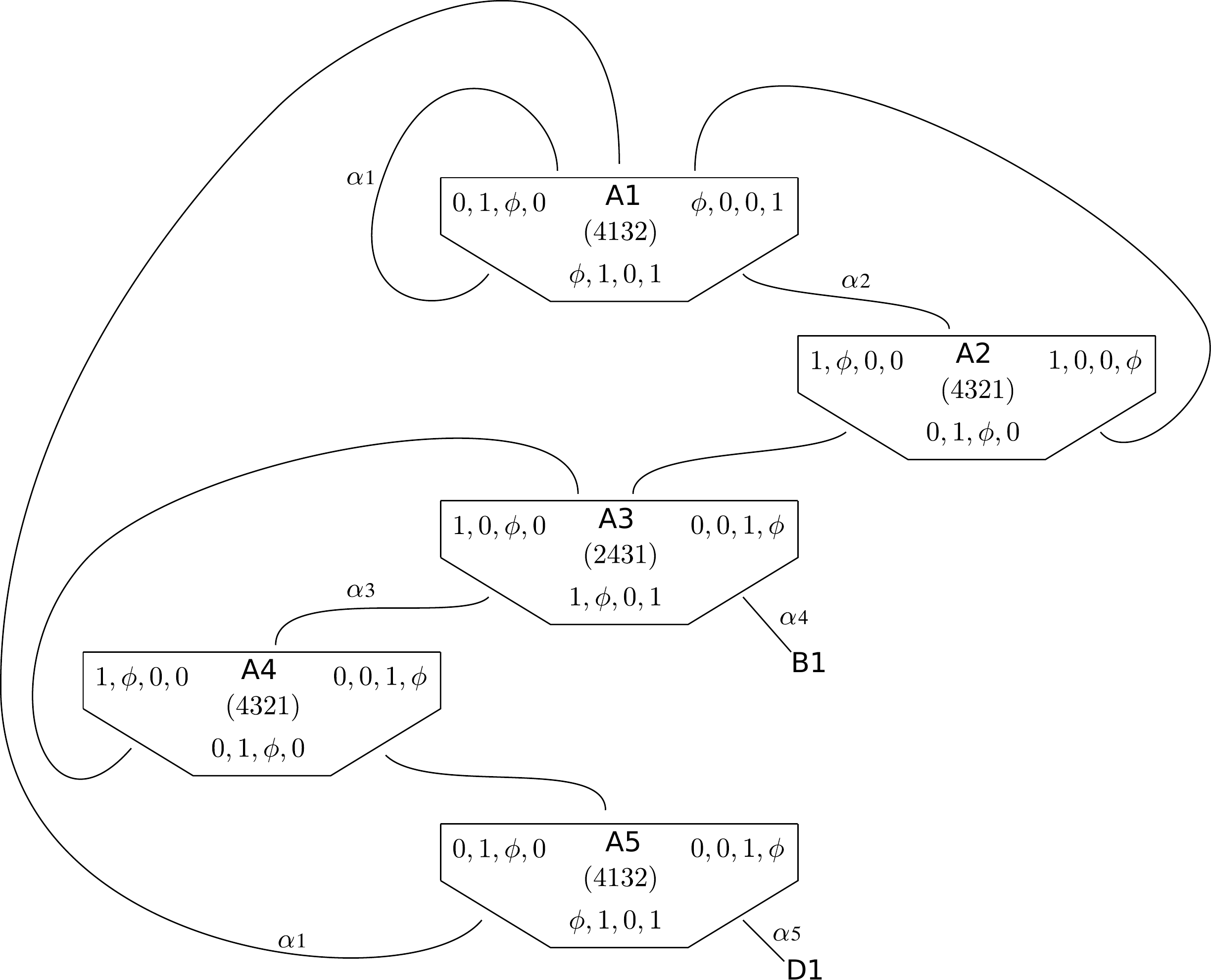}
  \caption{Diagram A}
  \label{fig:diagram-a}
\end{figure}
\begin{figure}
  \centering
  \includegraphics[width=\textwidth]{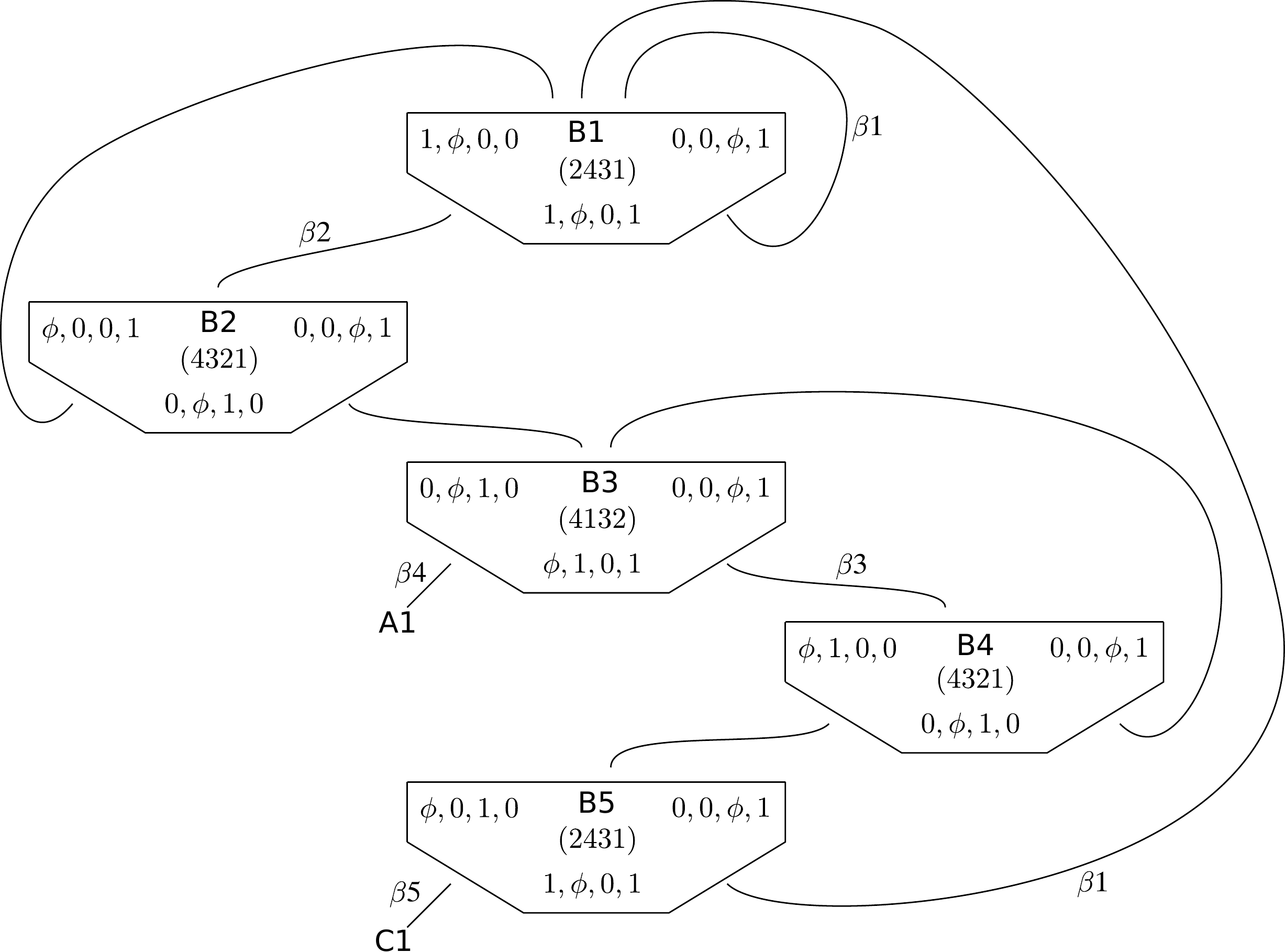}
  \caption{Diagram B}
  \label{fig:diagram-b}
\end{figure}
\begin{figure}
  \centering
  \includegraphics[width=\textwidth]{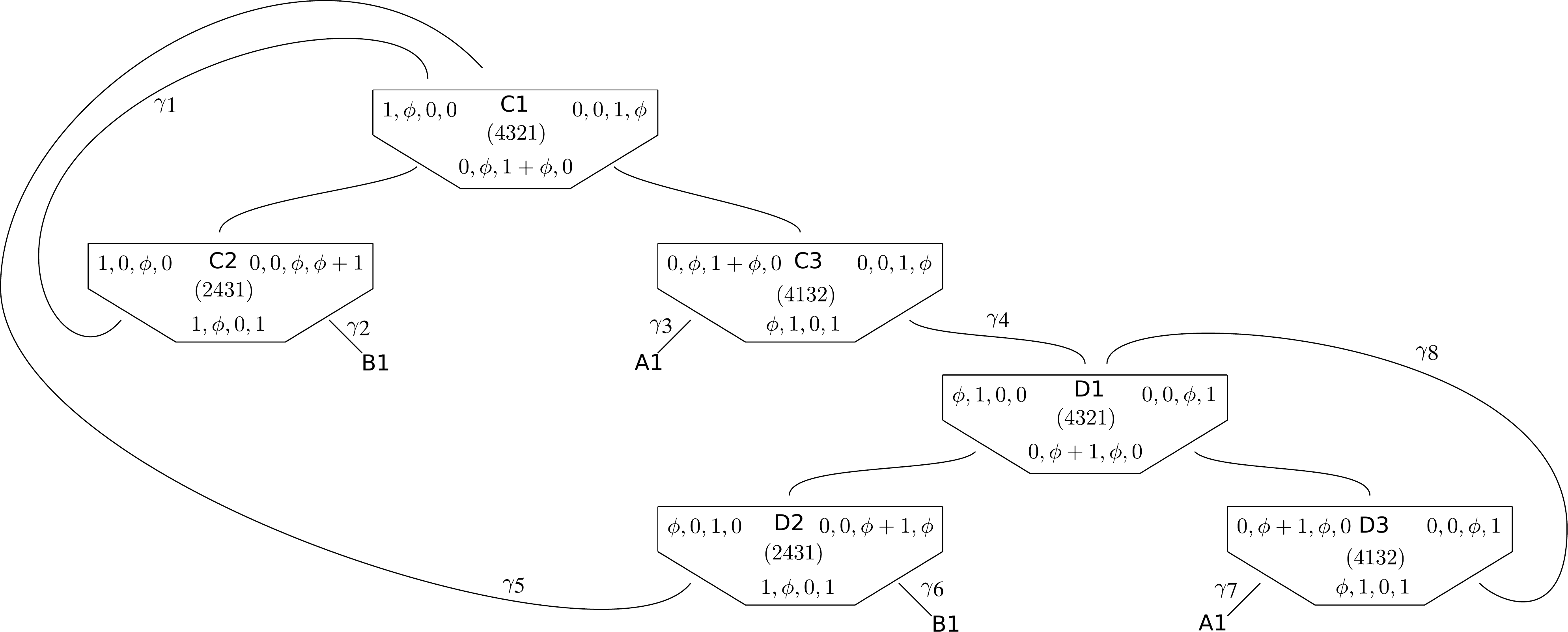}
  \caption{Diagram CD}
  \label{fig:diagram-cd}
\end{figure}
\begin{figure}
  \centering
  \includegraphics[width=\textwidth]{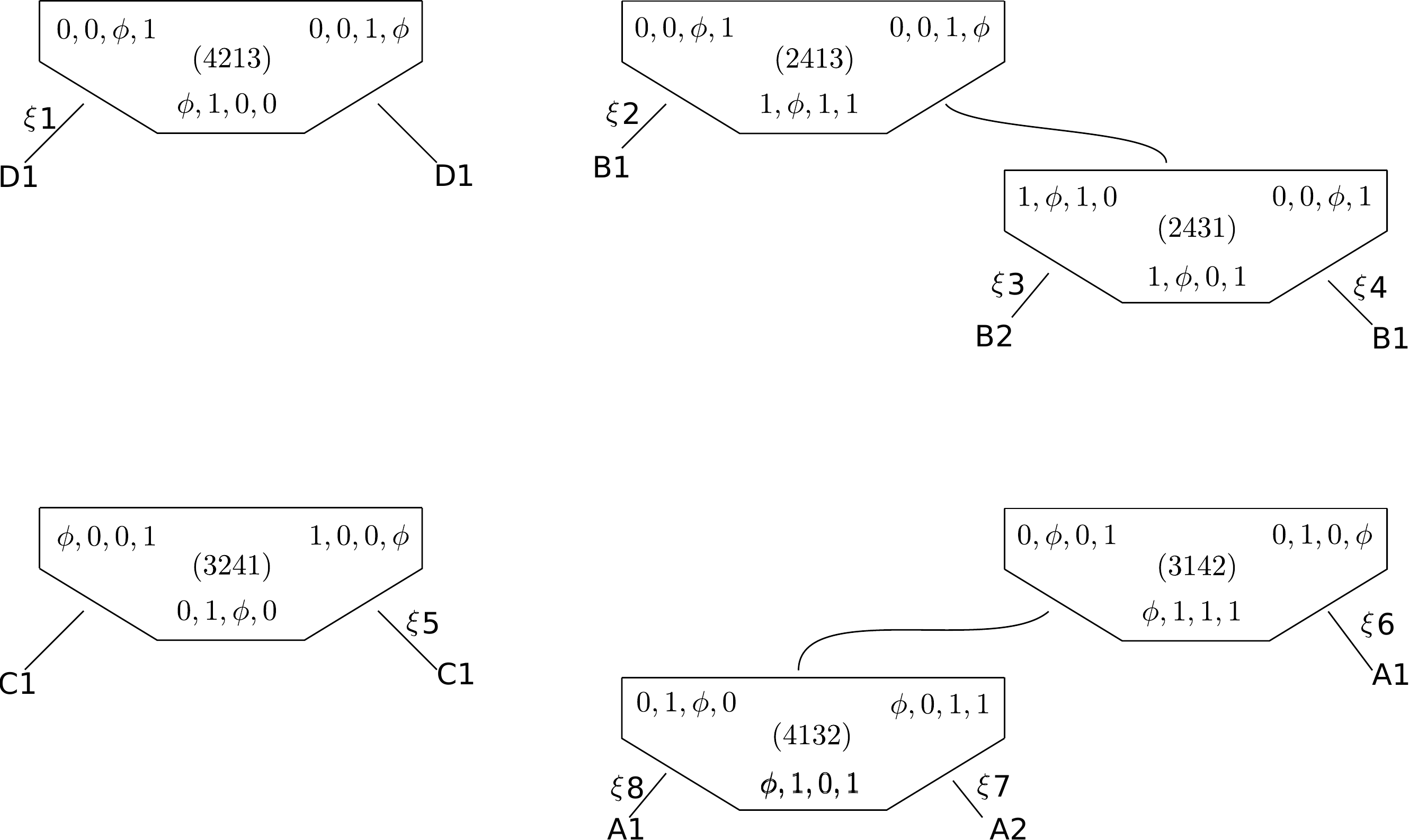}
  \caption{Diagram EFGH}
  \label{fig:diagram-efgh}
\end{figure}

By simply observing the Rauzy diagram and Figures~\ref{fig:diagram-a}
to~\ref{fig:diagram-efgh} it is immediate that the set of building
blocks satisfies \textbf{Transitivity} and \textbf{Completeness}
claimed in Proposition~\ref{prop:existence-of-building-blocks}.

To check \textbf{Combining Loops} and \textbf{Isolated Idle}, one checks in the diagrams
that the following paths are the only periodic paths of depth $0$, and
that their matrices have the desired properties.
\begin{enumerate}
\item $A1 \to A1$ corresponds to $M_{\alpha1}$.
$$M_{\alpha 1} = 
\begin{pmatrix}
 1 & 0 & 0 & 0 \\
 0 & 1 & 1 & 1 \\
 0 & 1 & 2 & 0 \\
 0 & 0 & 0 & 1
\end{pmatrix}$$
Columns 2 and 3 are active, 4 is passive. Column $1$ is idle. The
matrices which can follow without repeating this loop are
$\alpha_2M^{(4321)}_a\alpha_3, \alpha_2M^{(4321)}_a\alpha_4$,
$\alpha_2M^{(4321)}_b\alpha_1$ or $\alpha_2M^{(4321)}_b\alpha_2$, all
of which have at least two nonzero entries in each column. Thus,
Isolated Idle holds for this loop.
\item $A2 \to A1 \to A2$ corresponds to $M^{(4321)}_bM_{\alpha2}$
$$\begin{pmatrix}
 1 & 1 & 1 & 1 \\
 0 & 1 & 0 & 0 \\
 0 & 0 & 1 & 0 \\
 1 & 2 & 2 & 2
\end{pmatrix}$$
Columns 1 and 4 are active, 2 and 3 are passive.
\item $A3 \to A4 \to A3$ corresponds to
  $M_{\alpha3}M^{(4321)}_a$
$$\begin{pmatrix}
 1 & 1 & 1 & 1 \\
 0 & 1 & 0 & 0 \\
 1 & 1 & 2 & 0 \\
 0 & 0 & 0 & 1
\end{pmatrix}$$
Columns 1 and 3 are active, 2 and 4 are passive.
\item $B1 \to B1$ corresponds to $M_{\beta1}$
$$\begin{pmatrix}
 1 & 0 & 0 & 0 \\
 0 & 1 & 0 & 0 \\
 0 & 0 & 2 & 1 \\
 1 & 0 & 1 & 1
\end{pmatrix}$$
Columns 3 and 4 are active, 1 is passive. Column 2 is idle. The
matrices which can follow are $M_{\beta2}M^{(4321)}_a\beta_1$,
$M_{\beta2}M^{(4321)}_a\beta_2$, $M_{\beta2}M^{(4321)}_bM_{\beta3}$ or
$M_{\beta2}M^{(4321)}_bM_{\beta4}$, all of which have at least two
nonzero entries in each column. Thus, Isolated Idle holds.
\item $B1 \to B2 \to B1$ corresponds to $M_{\beta2}M^{(4321)}_a$
$$\begin{pmatrix}
 1 & 1 & 1 & 1 \\
 1 & 2 & 1 & 1 \\
 0 & 0 & 1 & 0 \\
 0 & 0 & 0 & 1
\end{pmatrix}$$
Columns 1 and 3 are active, columns 3 and 4 are passive.
\item $B3 \to B4 \to B3$ corresponds to $M_{\beta3}M^{(4321)}_b$
$$\begin{pmatrix}
 1 & 0 & 0 & 0 \\
 0 & 1 & 0 & 0 \\
 1 & 0 & 2 & 1 \\
 1 & 1 & 1 & 1
\end{pmatrix}$$
Columns 3 and 4 are active, columns 1 and 2 are passive.
\item $C1 \to C2 \to C1$ corresponds to $M^{(4321)}_aM_{\gamma1}$
$$\begin{pmatrix}
 1 & 1 & 1 & 1 \\
 1 & 2 & 0 & 0 \\
 0 & 0 & 1 & 0 \\
 0 & 0 & 0 & 1
\end{pmatrix}$$
Columns 1 and 2 are active, columns 3 and 4 are passive.
\item $D1 \to D3 \to D1$ corresponds to $M^{(4321)}_bM_{\gamma8}$
$$\begin{pmatrix}
 1 & 0 & 0 & 0 \\
 0 & 1 & 0 & 0 \\
 0 & 0 & 2 & 1 \\
 1 & 1 & 1 & 1
\end{pmatrix}$$
Columns 3 and 4 are active, columns 1 and 2 are passive.
\end{enumerate}

It remains to check \textbf{Almost Positivity}. We need to show that 
paths that increase depth enough yield almost positive matrices.

We begin by noting two reductions that describe why this is a finite task:
\begin{itemize}
\item It suffices to consider paths starting at
  $A1, A3, B1, B3, D1, C1$, as all paths we will construct will end at a such a
  point. Initial segments leading to one of these points have
  uniformly small length and do not affect the argument.
\item All minimal depth $0$ loops have matrices with
  positive entries on the diagonal. Since multiplying by such a matrix
  preserves the property of being positive, any path that yields a
  positive matrix will still have this property if any number of loops
  is inserted.
\item If a path without loops yields an almost positive, yet not
  positive matrix, there is a finite number of modifications (add
  loops of length $1$) that need to be considered. If all such
  modificiations yield positive matrices, we are done.
\end{itemize}
The desired property now follows by a lengthy case-by-case check. We
only list the cases for paths starting at $A1$ in details. All other
cases are checked in a completely analogous manner\footnote{which can
  also be automated and done with computer algebra systems -- see the
  second author's webpage for a SAGE script doing this task.}

The following observation helps to reduce the number of cases:
\begin{obs}
  Say that a $(4\times 4)$--matrix $A$ is \emph{weakly positive} if
  every entry is non-negative and there is at most one zero entry.  

  If $A$ is a weakly positive matrix and $B$ is Almost Positive or
  weakly positive, then $AB$ has positive entries.
\end{obs}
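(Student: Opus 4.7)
The plan is to show $(AB)_{ij}>0$ for every $(i,j)$ by examining the sum $(AB)_{ij}=\sum_k A_{ik}B_{kj}$ and, in each case, exhibiting an index $k$ for which both $A_{ik}$ and $B_{kj}$ are strictly positive. The argument splits naturally into two cases according to the type of $B$. A preliminary point to record is that weak positivity of $A$ implies more than it says literally: since there is at most one zero entry in the whole $4\times 4$ matrix, every individual row and every individual column of $A$ contains at least three strictly positive entries.

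In the first case $B$ is weakly positive. Let $(k_0,j_0)$ be the location of the unique zero of $B$ (if $B$ has none, the claim is immediate). For any target column $j\neq j_0$, all entries $B_{kj}$ are positive, so it suffices to find one $k$ with $A_{ik}>0$, which is automatic since row $i$ of $A$ contains at least three positive entries. For $j=j_0$, the only problematic summand is $A_{ik_0}B_{k_0,j_0}=0$, and we need some $k\neq k_0$ with $A_{ik}>0$; but again row $i$ of $A$ has at most one zero entry, so such a $k$ exists among the remaining three indices.

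In the second case $B$ is Almost Positive with $\tau\geq 2$ strictly positive rows $r_1,\dots,r_\tau$ (the other rows being rows of the identity matrix). For any column $j$, the entries $B_{r_\ell,j}$ are all positive. Thus
\[ (AB)_{ij} \geq \sum_{\ell=1}^{\tau} A_{i,r_\ell} B_{r_\ell,j}, \]
and it suffices to show that $A_{i,r_\ell}>0$ for some $\ell$. Because $A$ has at most one zero entry in total and $\tau\geq 2$, at most one of the values $A_{i,r_1},\dots,A_{i,r_\tau}$ can vanish, so at least one is positive, yielding the desired inequality.

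There is no real obstacle here; the only subtlety is book-keeping, namely making sure to use that ``at most one zero entry'' refers to the entire matrix (so every row and column of a weakly positive matrix is ``mostly positive''), and to use the hypothesis $\tau>1$ rather than $\tau\geq 1$ in the second case, since a single positive row in $B$ combined with an unfortunately placed zero in $A$ would not suffice.
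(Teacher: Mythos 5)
Your proof is correct: the paper states this as an observation without supplying any argument, and your entry-by-entry verification (splitting on whether $B$ is weakly positive or Almost Positive, and using that a weakly positive $A$ has at most one zero entry in the \emph{entire} matrix, hence at least three positive entries in every row) is exactly the intended elementary check. Your closing remark correctly identifies the two places where the hypotheses are genuinely needed, namely the global (not per-row) count of zeros in $A$ and the condition $\tau>1$ in the definition of Almost Positive.
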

Thus, it suffices to show a slightly weaker version of Almost
Positivity: \emph{any path that increases depth enough yields a matrix
  that is Almost Positive, or is weakly positive.}
\begin{description}
\item[Paths that begin with the loop $\alpha_1$ at A1] There are four
  possibilities that can follow the $\alpha_1$-loop: 
  \begin{enumerate}
  \item The path $A1\to A2\to A3\to B1$, corresponding to the matrix
    $$\alpha_1\alpha_2 M_a^{(4321)} \alpha_4 = \begin{pmatrix}
      2 & 2 & 1 & 2 \\
      2 & 1 & 2 & 4 \\
      2 & 0 & 1 & 3 \\
      1 & 1 & 1 & 2
    \end{pmatrix}$$
    which is weakly positive.
  \item The path $A1\to A2\to A3\to A4\to A5\to D1$, corresponding to the matrix
    $$\alpha_1\alpha_2 M_a^{(4321)} \alpha_3 M_b^{(4321)} \alpha_5 = \begin{pmatrix}
      4 & 5 & 4 & 2 \\
      3 & 5 & 3 & 1 \\
      1 & 2 & 2 & 0 \\
      2 & 3 & 2 & 1
    \end{pmatrix}$$
    which is weakly positive.
  \item The loop $A1\to A2\to A3\to A4\to A5\to A1$, which immediately
    leads to a positive matrix.
  \item The loop $A1\to A2\to A1$, leading to 
    $$\alpha_1\alpha_2 M_b^{(4321)} = \begin{pmatrix}
      2 & 1 & 1 & 1 \\
      1 & 2 & 2 & 1 \\
      0 & 1 & 2 & 0 \\
      1 & 1 & 1 & 1
    \end{pmatrix}$$
    If this is followed by a path beginning with $A1\to A3$, then one
    of the previous cases implies that we will reach a positive
    matrix. The only other possibility is to alternate with the
    $A1\to A1$ loop. But, the square of the above matrix is positive,
    and thus this case also yields a positive matrix.
  \end{enumerate}
  As a consequence we record: \emph{Any path that begins with the
    $\alpha_1$--loop and increases depth will yield a positive
    matrix.} In particular, in the consequent analysis we never have
  to consider the case that the $\alpha_1$--loop is inserted in the
  middle of some path -- as then the resulting matrix will be positive
  upon increasing depth enough.
\item[Paths that begin with the loop $A1 \to A2 \to A1$] 
  \begin{enumerate}
  \item If the loop is followed by the $\alpha_1$--loop, then the
    above show that the result will turn positive.
  \item If we follow by $A1\to A3$ the resulting matrix is
    $$\alpha_2 M_b^{(4321)} \alpha_2 M_a^{(4321)} = \begin{pmatrix}
      2 & 5 & 4 & 4 \\
      0 & 0 & 1 & 0 \\
      0 & 0 & 0 & 1 \\
      1 & 3 & 3 & 3
    \end{pmatrix}$$
    which is Almost Positive. 
  \end{enumerate}
  In consequence, \emph{Any path that begins with the
    $(A1\to A2\to A1)$--loop and increases depth will yield an Almost
    Positive matrix.}
\item[Paths that begin with $A1 \to A2 \to A3 \to B1$] 
  Here, we start with the matrix
    $$\alpha_2 M_a^{(4321)} \alpha_4 = \begin{pmatrix}
      2 & 2 & 1 & 2 \\
      0 & 0 & 1 & 1 \\
      1 & 0 & 0 & 1 \\
      1 & 1 & 1 & 2
    \end{pmatrix}$$
  which is followed by $B1 \to B3$, leading to
    $$\alpha_2 M_a^{(4321)} \alpha_4 \beta_2 M_b^{(4321)} = \begin{pmatrix}
      6 & 5 & 6 & 2 \\
      0 & 1 & 1 & 0 \\
      1 & 1 & 2 & 0 \\
      3 & 3 & 4 & 1
    \end{pmatrix}$$
  From here, there are several possibilities:
  \begin{enumerate}
  \item Returning to $A1$, leading to 
    $$\alpha_2 M_a^{(4321)} \alpha_4 \beta_2 M_b^{(4321)} \beta_4 = \begin{pmatrix}
      6 & 5 & 11 & 7 \\
      0 & 1 & 2 & 1 \\
      1 & 1 & 3 & 1 \\
      3 & 3 & 7 & 4
    \end{pmatrix}$$
    which is weakly positive.
  \item Following $B3\to B5 \to B1$, leading to a positive matrix.
  \item Following $B3 \to B5 \to C1$, leading to a weakly positive matrix.
  \end{enumerate}
\item[Paths that begin with $A1 \to A2 \to A3 \to A5 \to D1$] 
  This corresponds to the matrix
    $$\alpha_2 M_a^{(4321)} \alpha_3 M_b^{(4321)} \alpha_5 = \begin{pmatrix}
      4 & 5 & 4 & 2 \\
      1 & 2 & 0 & 0 \\
      0 & 0 & 1 & 0 \\
      2 & 3 & 2 & 1
    \end{pmatrix}$$
    There are several possibilities:
    \begin{enumerate}
    \item following with $D1 \to D2 \to B1$ yields a weakly positive matrix.
    \item following with $D1 \to D2 \to C1$ yields a Almost Positive matrix. 
    \item following with $D1\to D3\to D1 \to D2 \to C1$ (inserting the
      $D1$ loop into the previous case) leads to 
    $$\alpha_2 M_a^{(4321)} \alpha_3 M_b^{(4321)} \alpha_5 ( M_b^{(4321)}\gamma_8) M_a^{(4321)}\gamma_5 = \begin{pmatrix}
      6 & 13 & 16 & 12 \\
      1 & 3 & 1 & 1 \\
      0 & 0 & 2 & 0 \\
      3 & 7 & 8 & 6
    \end{pmatrix}$$
    Following with $C1 \to C3$ leads to a weakly positive
    matrix. Following with $C1\to C2\to B1$ leads to a positive
    matrix. Since both of these lead to positive matrices, we are done. 
    \end{enumerate}
  \item[Paths that begin with $A1 \to A2 \to A3 \to A5 \to A1$] Note
    that the matrix describing $A5 \to A1$ in fact is the same as
    $A1 \to A1$, and thus this follows from the first case.
\end{description}

\section{Extending to $n$-IETs}
\label{sec:n-iets}
The goal of this section is to prove
\begin{thm}\label{thm:gen case}Let $\pi$ be a non-degenerate permutation for $n$-IETs
  where $n\geq 4$. The set of
  uniquely ergodic $n$-IETs with permutation $\pi$ is path connected. 
\end{thm}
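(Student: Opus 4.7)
The strategy is to reduce to the $n=4$ case already established in Theorem~\ref{thm:main-for-4} via a stratification of $\Delta_\pi$ by its boundary faces, as outlined in the introduction. For each $4$-element subset $S\subset\{1,\ldots,n\}$, let $\Delta_{\pi,S}\subset\overline{\Delta_\pi}$ denote the closed face of IETs whose length vector is supported on $S$. Forgetting the zero-length intervals, $\Delta_{\pi,S}$ is naturally identified with the space $\Delta_{\pi_S}$ of $4$-IETs for the induced permutation $\pi_S$. Let $U\subset\overline{\Delta_\pi}$ be the union of those $\Delta_{\pi,S}$ for which $\pi_S$ is non-degenerate and lies in the Rauzy class of $(4321)$. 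Unique ergodicity is preserved under adding or removing zero-length intervals, so Theorem~\ref{thm:main-for-4} applied in each stratum shows that the uniquely ergodic points in each $\Delta_{\pi,S}\subset U$ are path-connected.

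Next I would show that $U$ itself is path-connected. Given two strata $\Delta_{\pi,S}$ and $\Delta_{\pi,S'}$ with $|S\cap S'|=3$, both sit as faces of the $5$-dimensional stratum $\Delta_{\pi,S\cup S'}$; by applying Theorem~\ref{thm:main-for-4} to a non-degenerate $4$-subface lying inside the $5$-stratum (obtained by fixing one of the active coordinates small and positive), one can build a path of uniquely ergodic IETs crossing between the two $4$-strata. The combinatorial core of the argument is then to verify that, because $\pi$ is non-degenerate, any two admissible $4$-subsets can be connected by a chain $S=S_0,S_1,\ldots,S_m=S'$ with $|S_i\cap S_{i+1}|=3$ and all $\pi_{S_i}$ non-degenerate, which I anticipate to be the main obstacle of this step: the four clauses of Definition~\ref{def:degenerate} must be ruled out for the induced permutations along the chain, which requires an explicit case analysis on $\pi$.

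Finally, given an arbitrary uniquely ergodic $T\in\Delta_\pi$, I would connect $T$ to a point of $U$ by mimicking the limiting argument of Theorem~\ref{thm:main4}. Using the shadow-of-Rauzy-paths construction of Section~\ref{sec:shadows}, select a sequence $T_i\to T$ in $\Delta_\pi$ where each $T_i$ has its last $n-4$ coordinates of size $\varepsilon_i\to 0$ and agrees with $T$ on an initial segment of Rauzy induction of length growing to infinity; each $T_i$ is close to a uniquely ergodic point in a $4$-stratum of $U$ via the shadow construction, and these approximants can be joined to the corresponding boundary points by straight segments whose diameters shrink to zero by unique ergodicity of $T$ (the relevant columns of the Rauzy matrices collapse to a single direction, exactly as in the proof of Theorem~\ref{thm:main4}). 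Applying Lemma~\ref{limit path} yields a path from $T$ to $U$, and combined with path-connectivity of $U$ this completes the proof.

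The principal obstacle is the combinatorial lemma asserting that $U$ is path-connected: this is where the hypothesis of non-degeneracy of $\pi$ enters in an essential way, and must be checked by a careful analysis of how the degeneracy conditions behave under passing to $4$-element sub-permutations. The other two steps (in-stratum connectivity and the limit argument) are in principle direct adaptations of the $n=4$ machinery already developed.
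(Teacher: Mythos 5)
Your overall architecture (connect within $4$-dimensional faces via Theorem~\ref{thm:main-for-4}, show the union $U$ of good faces is path-connected, then reach an arbitrary uniquely ergodic $T$ by a limiting argument) matches the paper's outline, but the step you yourself identify as the crux --- path-connectivity of $U$ --- is left unproved, and the mechanism you propose for it does not work as stated. Two faces $\Delta_{\pi,S}$, $\Delta_{\pi,S'}$ with $|S\cap S'|=3$ meet only in the $3$-face $\Delta_{\pi,S\cap S'}$, and ``fixing one active coordinate small and positive'' inside the $5$-stratum does not produce a $4$-IET face (a $4$-face requires $n-4$ coordinates to be exactly zero); moreover uniquely ergodic $3$-IETs are not path-connected, so you cannot simply cross through the common $3$-face, and Theorem~\ref{thm:main-for-4} by itself does not let you reach boundary points of a $4$-face. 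The paper's route is genuinely different: it passes through \emph{$2$-element} subsets on which $\pi$ acts as a rotation (irrational rotations are uniquely ergodic $2$-IETs, and the $n=4$ machinery does connect interior points of a $4$-face to such boundary rotations), defines two $4$-tuples to be \emph{accessible} when suitable pairs from each combine into an irreducible $4$-tuple avoiding $(4231)$, and then proves (Lemma~\ref{lem:nonaccessible-order}, Lemma~\ref{lem:decr dist}, Lemma~\ref{lem:switch}) that any two admissible $4$-tuples are joined by a chain of accessible pairs, by induction on a lexicographic distance. Your alternative chain condition ($|S_i\cap S_{i+1}|=3$ with every $\pi_{S_i}$ non-degenerate) is a different combinatorial assertion that you neither prove nor reduce to something known, and it is not clear it holds; in any case it would still leave the crossing mechanism unresolved.

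There is also a gap in your limiting step. Approximants whose ``last $n-4$ coordinates have size $\varepsilon_i$'' are not in general close to $T$: closeness to $T$ is achieved by taking \emph{secret $4$-IETs at level $m_i$}, i.e.\ points of the form $M(T,m_i)v$ with $v$ supported on $4$ entries, which converge to $T$ because $M(T,m_i)\Delta\to\{T\}$ by unique ergodicity; one then needs the paths between consecutive approximants to stay inside $M(T,k_i)\Delta$ (the paper's Lemma~\ref{lem:well connected}, which uses positivity of an intermediate Rauzy matrix, plus Corollary~\ref{cor:zero} for secret $4$-IETs at positive level --- level $0$ is not enough). Finally, when Rauzy induction on $T$ terminates, the paper inducts on $n$ by passing to the induced map $S|_J$ on fewer symbols and pushing a path forward by the associated $d\times k$ matrix; invoking Section~\ref{sec:shadows} alone does not supply this step.
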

The main tool in the proof of this is the following object.
\begin{defin}
  An IET $S$ is called a \emph{secret $4$-IET at level $k$} if there
  exists $M(T,r)$ a matrix of Rauzy induction with $r\leq k$, $v$ a
  non-negative vector with at most 4 non-zero entries so that
  $L(S)=M(T,r)v$.
\end{defin}
In other words, a secret $4$-IET at level $0$ is an $n$-IET which
has at most $4$ intervals of nonzero length. A general secret $4$-IET
is one that is of this form after applying some number of Rauzy steps.

Next we describe how $\pi$ acts on subsets of $\{1,..,d\}$. 
Let $(p_1,\ldots,p_4)$ be a $4$-tuple of numbers, labeled so that
$p_1<p_2<p_3<p_4$. Applying $\pi$ to the $p_i$ maps them to numbers
$\pi(p_i)$ which may not be in order. Let $\pi|(p_1,p_2,p_3,p_4)$ be
the permutation on the numbers $\{1,2,3,4\}$ which describes how the
order of $p_i$ is rearranged by $\pi$. We call $\pi|(p_1,p_2,p_3,p_4)$
the \emph{permutation $\pi$ restricted to $(p_1,\ldots,p_4)$}.

To give a simple example, consider $\pi=(3142)$: then $\pi|(3,4) = (12)$
and $\pi|(1,3) = (21)$.

We say that a permutation $\pi$ \emph{acts as a rotation} on a pair
$(p_1,p_2)$ of symbols if $\pi|(p_1,p_2) = (21)$. A permutation $\pi$
is \emph{irreducible} if it does not preserve a set of the form
$\{1,\ldots,k\}$ for $k<n$.
\begin{defin}
  Let $\pi$ be a permutation on $n$ symbols and $(p_1,\ldots,p_4)$,
  $(q_1,\ldots,q_4)$ be two $4$-tuples of entries so that $\pi$ restricts on
  both as a permutation in the Rauzy class of $(4321)$. We call the tuples
  \emph{accessible} if there exists index pairs $(r_1,r_2)$, $(s_1,s_2)$ so that $\pi$
  acts on $(p_{r_1},p_{r_2},q_{s_1},q_{s_2})$ irreducibly, 
   $\pi$ acts as a rotation on $(p_{r_1},p_{r_2})$ and
  $(q_{s_1},q_{s_2})$ and we do not have $\pi|(p_{r_1},p_{r_2},q_{s_1},q_{s_2})=(4231)$.
\end{defin}
We require that $\pi|(p_{r_1},p_{r_2},q_{s_1},q_{s_2})\neq(4231)$
 for technical reasons relating to the proof of the next lemma which does not work for this permutation. 
In particular, most of the analysis is for the case where $\pi|(p_{r_1},p_{r_2},q_{s_1},q_{s_2})$ acts as a 3-IET with a pair of $p_i,q_j$ behaving as one interval. Our argument does not work when this interval is the second one, which is the case of $(4231)$.
\begin{lem}\label{lem:joining-accessible}
Fix a non-degenerate permutation $\pi$ on $d$ symbols and suppose that 
$T,S$ are uniquely ergodic IETs with permutation $\pi$.
Further assume 
that the lengths vectors of $T$ and $S$ are $0$ off of
$(p_1,\ldots,p_4)$ and $(q_1,\ldots,q_4)$ respectively (i.e. that they
are secret $4$-IETs). 

If $(p_1,\ldots,p_4)$ and $(q_1,\ldots,q_4)$ are accessible then there exists a
path of uniquely ergodic IETs connecting $T$ to $S$. 
\end{lem}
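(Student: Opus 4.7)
The plan is to construct a path from $T$ to $S$ that passes through three embedded 4-IET sub-simplices of $\Delta_\pi$: the 3-simplices $\Delta_P,\Delta_R,\Delta_Q$ of $n$-IETs supported respectively on $P=(p_1,\ldots,p_4)$, $R=(p_{r_1},p_{r_2},q_{s_1},q_{s_2})$ and $Q=(q_1,\ldots,q_4)$. The hypotheses give that $\pi|P$ and $\pi|Q$ lie in the Rauzy class of $(4321)$; the accessibility condition (together with the exclusion $\pi|R\neq(4231)$, which rules out the one four-element "double rotation" pattern outside the Rauzy class of $(4321)$) ensures the same for $\pi|R$. Thus Theorem~\ref{thm:main-for-4} applies inside each of the three subspaces. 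The key geometric point is that $\Delta_P\cap\Delta_R$ is the 1-simplex of 2-IETs supported on $(p_{r_1},p_{r_2})$ and $\Delta_R\cap\Delta_Q$ is the 1-simplex of 2-IETs supported on $(q_{s_1},q_{s_2})$, each of which contains uniquely ergodic irrational rotations $\rho_P$ and $\rho_Q$.

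The main step is to build a path from $T\in\Delta_P$ to $\rho_P$ through uniquely ergodic $n$-IETs, mirroring the last case of the proof of Theorem~\ref{thm:main4}. Since $\rho_P$ is a 2-IET with irrational rotation number, the shadow construction of Section~\ref{sec:shadows} assigns to it an infinite Rauzy path in the diagram of $\pi|P$ in which at least three columns of the Rauzy matrices grow unboundedly and converge projectively to the length vector of $\rho_P$. Pick a sequence of uniquely ergodic 4-IETs $P_n\in\Delta_P$ (dense by Masur-Veech) which follow this shadow for the first $k_n$ steps, $k_n\to\infty$, so that $P_n$ lies in a Rauzy region shrinking to $\rho_P$. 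Applying Theorem~\ref{thm:main-for-4} inside $\Delta_P$ gives paths of uniquely ergodic IETs $T\to P_1$ and $P_n\to P_{n+1}$, and Corollary~\ref{cor:non idoc} lets these paths be chosen inside the same shrinking Rauzy regions. Lemma~\ref{limit path} then produces a continuous path from $T$ to $\rho_P$. The identical construction inside $\Delta_R$ yields a path from $\rho_P$ to $\rho_Q$ (splicing two one-sided approximations meeting at $\rho_P$ via a second application of Lemma~\ref{limit path}), and a final application inside $\Delta_Q$ produces a path from $\rho_Q$ to $S$.

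The main obstacle is the diameter control: the paths $P_n\to P_{n+1}$ supplied by Theorem~\ref{thm:main-for-4} must be arranged to lie in the shrinking Rauzy regions $M(\rho_P,k_n)\Delta$, so that Lemma~\ref{limit path} applies. This requires reading the building-block construction inside a prescribed initial Rauzy segment (the first $k_n$ steps of the shadow), just as in the proof of Theorem~\ref{thm:main4} for $4$-IETs whose Rauzy induction is not defined for all time, with Corollary~\ref{cor:non idoc} providing the necessary estimate. A secondary subtlety is that at the transition $\rho_P$ the limits from the $\Delta_P$ side and the $\Delta_R$ side must coincide; this is automatic because both sequences converge to the same 2-IET, and the convergence happens through Rauzy regions which in each simplex collapse onto the shared boundary face $\Delta_P\cap\Delta_R$.
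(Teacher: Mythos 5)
Your overall skeleton ($T \to \rho_P$ inside $\Delta_P$, then $\rho_P \to \rho_Q$ inside $\Delta_R$, then $\rho_Q \to S$ inside $\Delta_Q$) is the same as the paper's, and the two outer legs are correct — although they follow immediately from Theorem~\ref{thm:main-for-4} applied to $\Delta_P$ and $\Delta_Q$ (whose restricted permutations lie in $\mathcal{R}(4321)$ by hypothesis, and the theorem as proved already covers endpoints with zero-length intervals), so the shadow/limit machinery you redeploy for $T\to\rho_P$ is redundant. The genuine gap is in the middle leg. You claim that accessibility, together with the exclusion of $(4231)$, forces $\pi|(p_{r_1},p_{r_2},q_{s_1},q_{s_2})$ into the Rauzy class of $(4321)$. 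That is false: accessibility only requires this restriction to be irreducible and different from $(4231)$, and there are several irreducible $4$-permutations outside $\mathcal{R}(4321)$ besides $(4231)$ — e.g.\ $(3412)$ (the actual ``double rotation''), and permutations such as $(3421)$, $(4312)$, $(2341)$, $(4123)$ which act as $3$-IETs or $2$-IETs with one interval split. For these, $\Delta_R$ is a copy of a \emph{degenerate} $4$-IET space, Theorem~\ref{thm:main-for-4} does not apply to it, and the set of uniquely ergodic points of $\Delta_R$ by itself is genuinely not path-connected: the underlying transformation is a rotation or $3$-IET whose rotation number is a nonconstant continuous function on $\Delta_R$, and unique ergodicity forces that number to be irrational, so path components are trapped in level sets of the rotation number.

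These degenerate cases are the substance of the paper's proof. The fix there is to choose $\rho_P$ and $\rho_Q$ to be rotations by the \emph{same} irrational number (a constraint your construction never imposes) and to connect them along a level set of the rotation number inside $\Delta_R$: by transferring weight between symbols that move together when $\pi|R$ acts as a rotation, and along the explicitly parametrized segment $H_\alpha$ when $\pi|R$ acts as an element of $\mathcal{R}(321)$. This is also exactly where the exclusion of $(4231)$ enters: $(4231)$ is the $3$-IET pattern in which the \emph{middle} interval is the one split in two, and for irrational $\alpha$ the level set $H_\alpha$ then fails to contain both of the boundary rotations one needs to reach — it is not, as you state, ``the one four-element double rotation pattern outside the Rauzy class.'' Without this case analysis the middle leg of your path cannot be completed.
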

\begin{proof} 
  If $\pi|(p_{r_1},p_{r_2},q_{s_1},q_{s_2})\in \mathcal{R}(4321)$ or
  is a rotation (for example if
  $\pi|(p_{r_1},p_{r_2},q_{s_1},q_{s_2})=(3412)$) then we can choose
  $T_1$ and $S_1$ secret $2$-IETs at level zero on $(p_{r_1},p_{r_2})$
  and $(q_{s_1}, q_{s_2})$ which are irrational rotations by the same
  number. The set of all $n$-IETs whose length vectors are $0$ off of
  the entries $(p_1,\ldots,p_4)$ is a copy of $4$-IET space. By
  Theorem~\ref{thm:main-for-4}, $T$ can therefore be connected to
  $T_1$ by a continuous path. The analogous statement is follows for
  $S, \, S_1$. Applying Theorem~\ref{thm:main-for-4} once more to
  connect $T_1$ to $S_1$ we obtain the desired path from $T$ to $S$ by
  concatenation.

\smallskip
If $\pi|(p_{r_1},p_{r_2},q_{s_1},q_{s_2})$ acts as a rotation then
$(r_i,s_j)$ and $(r_k,s_l)$ can be treated as one interval and we can
change weight between them, without changing the self-map of the
interval that the IET defines. This defines a path of uniquely ergodic IETs
connecting $T_1,S_1$.  
 
\smallskip
If $\pi|(p_{r_1},p_{r_2},q_{s_1},q_{s_2})$ acts as an element in
$\mathcal{R}(321)$ then one of the three intervals is split in two. 
Choose $T_1,S_1$ to be secret 2-IETs which are rotations by the same irrational number.
 These are path connected to $T$ and $S$ respectively by Theorem~\ref{thm:main-for-4}.
Recall from the comments in the Introduction following Theorem \ref{thm:main} that 3-IETs can be thought of as the
induced map of rotation by ${\frac{1-L_1}{1+L_2}}$ on $[0,\frac 1 {1+L_2})$
 and they are uniquely ergodic if and
only if $\frac{1-L_1}{1+L_2}$ is irrational. 
Now we connect $T_1$ and $S_1$ in the set of IETs with length zero except on $p_{r_1},p_{r_2},q_{s_1},q_{s_2}$ 
keeping $\frac{1-L_1}{1+L_2}\notin \mathbb{Q}$ constant, which keeps the 3-IETs all uniquely ergodic. We can do this because $T_1$ and $S_1$ are rotations by the same irrational number.

To see that this is doable,
consider $\alpha \in [0,1]\setminus \mathbb{Q}$ and the line segment $$H_\alpha=\Delta_3\cap \{(x_1,x_2,x_3):\frac{x_2+x_3}{x_1+2x_2+x_3}=\alpha\}.$$ Observe that $(1-\alpha,0,\alpha)\in H_\alpha$. 
If $\alpha<\frac 1 2 $ then $(\frac{1-2\alpha}{1-\alpha},\frac\alpha {1-\alpha},0)$ is in this set. If $\alpha>\frac 1 2$ then $(0, \frac{1-\alpha}\alpha,\frac{2\alpha-1}\alpha)\in H_\alpha$.\footnote{The fact that if $\alpha \notin \mathbb{Q}$ we only get one of $(1-\alpha,\alpha,0)$ or $(0, 1-\alpha, \alpha)$ in $H_\alpha$ is why we rule out the case of $(4231)$.}
If we have $(p_1,q_1,p_2,q_2)\to (p_2,q_2,q_1,p_1)$ (where this notation means that $p_1<q_1<p_2<q_2$ and $\pi$ restricted to these intervals puts $p_2$ first, then $q_2$ then $q_1$ then $p_1$) choose $\alpha>\frac 12$.  Start at $(1-\alpha,0,\alpha,0)$ and move in a straight line to $(1-\alpha,0,0,\alpha)$. From here move along the path $(a_t,b_t,0,c_t)$ where $(a_t,b_t,c_t)$ parametrizes the line between $(1-\alpha,0,\alpha)$ and $(0, \frac{1-\alpha}\alpha,\frac{2\alpha-1}\alpha)$ in $H_\alpha$. 
The case of $(p_1,q_1,q_2,p_2) \to(q_2,p_2,q_1,p_1)$ is similar.  
The cases of $(p_1,q_1,p_2,q_2) \to (q_2,p_2,p_1,q_1)$ and $(p_1,q_1,q_2,p_2)$ are also similar (if $L(T)=(a,b,c)$ then $L(T^{-1})=(c,b,a)$). 
\end{proof}

The proof of Theorem~\ref{thm:gen case} relies mainly on the following
combinatorial statement:
\begin{prop}\label{prop:accessible-chain}
Let $\pi$ be irreducible. Any two $4$-tuples of entries so that $\pi$
acts on both as a permutation in the Rauzy class of $(4321)$ can be
linked by an accessible chain. That means there is a sequence of
$4$-tuples, so that consecutive ones are accessible, which begins and
ends with the given $4$-tuples.
\end{prop}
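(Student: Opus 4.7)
The plan is to prove connectivity of the \emph{accessibility graph} $\mathcal{G}$ whose vertices are the 4-tuples $(p_1,\ldots,p_4)$ such that $\pi|(p_1,\ldots,p_4)$ lies in the Rauzy class of $(4321)$, with edges recording accessibility. The strategy proceeds in two main steps: first, reduce the problem to \emph{single-element swaps} between valid 4-tuples, and second, exhibit explicit accessible chains realising each such swap.

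For the reduction step, I would argue that any two valid 4-tuples $P$ and $Q$ can be joined by a sequence $P = P_0, P_1, \ldots, P_m = Q$ of valid 4-tuples whose consecutive terms differ in at most one element. We build the sequence greedily, swapping at each stage one element of $P_j$ for an element of $Q$ so that validity is preserved. Whenever a direct swap destroys validity, irreducibility of $\pi$ is used to route via an auxiliary valid tuple built from elements outside $P \cup Q$: since $\pi$ preserves no set $\{1,\ldots,k\}$, the combinatorial structure of $\pi$ produces rotation-good pairs in the complement of any fixed finite subset, supplying the elements needed for the detour.

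For the swap step, given valid 4-tuples $P$ and $P'$ differing in one element, I would produce an accessible chain $P \to R \to P'$, where $R$ is a valid 4-tuple disjoint from $P \cup P'$. Each link of this chain requires the choice of two disjoint rotation-good pairs, one drawn from each endpoint, such that the induced 4-permutation is irreducible and not equal to $(4231)$. The 4-tuple $R$ is chosen so as to house, simultaneously, a pair compatible with a fixed pair in $P$ and a pair compatible with a fixed pair in $P'$. Existence of such an $R$ again rests on irreducibility of $\pi$, together with the richness of rotation-good pairs implicit in any permutation of the Rauzy class of $(4321)$, where every restriction of $\pi$ to the chosen 4-tuple admits several rotation-good pairs.

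The main obstacle I expect lies in avoiding the forbidden restriction $\pi|(p_{r_1},p_{r_2},q_{s_1},q_{s_2}) = (4231)$, together with ensuring irreducibility of the 4-permutation. For certain configurations of $\pi$ the most natural candidate pairs will produce precisely the $(4231)$ pattern or a reducible one, and one must then route through additional intermediate tuples. A case analysis over the four permutations $(4321), (4213), (4132), (3142)$ in the Rauzy class, together with a careful enumeration of how rotation-good pairs from $P, P',$ and the complement of $P \cup P'$ can be combined, is where the combinatorial heart of the argument resides. Once this enumeration is carried out, the existence of a suitable $R$ in every configuration should follow, yielding the proposition.
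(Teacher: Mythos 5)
There is a genuine gap: both pillars of your plan are unproven, and one of them is false as stated. First, your swap step requires an auxiliary valid $4$-tuple $R$ \emph{disjoint} from $P\cup P'$. If $P$ and $P'$ differ in one element then $|P\cup P'|=5$, so a disjoint $4$-tuple needs $n\geq 9$; for $5\leq n\leq 8$ no such $R$ exists, and the proposition must hold for all $n\geq 5$. (Note that the definition of accessibility in the paper does not require the two tuples to be disjoint, so imposing disjointness is both unnecessary and fatal here.) Second, the reduction to single-element swaps rests on the claim that irreducibility supplies ``rotation-good pairs in the complement of any fixed finite subset.'' Irreducibility guarantees that inversions ($p_1<p_2$ with $\pi(p_1)>\pi(p_2)$) exist somewhere, but not inside an arbitrary prescribed complement; for small $n$ the complement can even be empty. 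Finally, the actual combinatorial content --- showing that the required pairs can always be combined into an irreducible restriction avoiding $(4231)$ --- is explicitly deferred (``once this enumeration is carried out\ldots''), so nothing is established.

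For contrast, the paper's proof is organized around a structure theorem (Lemma~\ref{lem:nonaccessible-order}): if two valid $4$-tuples are \emph{not} accessible, then one of them lies entirely to the left of the other both in position and in image, i.e.\ $p_i<q_j$ and $\pi(p_i)<\pi(q_j)$ for all $i,j$. This rigidity makes a lexicographic ``distance'' $(\min q_i-\max p_i,\ \min\pi(q_i)-\max\pi(p_i))$ well defined and strictly positive exactly on non-accessible pairs, and Lemma~\ref{lem:decr dist} (using irreducibility via Lemma~\ref{lem:switch}) produces, for any non-accessible pair, an intermediate tuple $A$ accessible to $Q$ and strictly closer to $P$ --- typically by swapping a single entry of $Q$, with no disjointness demand. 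The induction then terminates. If you want to salvage your approach, the missing idea is precisely this characterization of non-accessibility: it is what tells you where the obstructions can occur and guarantees that a one-element modification suffices to make progress.
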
 

\begin{lem} \label{lem:nonaccessible-order} If $(p_1,\ldots,p_4)$,
  $(q_1,\ldots,q_4)$ are not an accessible pair then up to swapping
  $(p_1,\ldots,p_4)$ with $(q_1,\ldots,q_4)$ we have $p_i<q_j$,
  $\pi(p_i)<\pi(q_j)$ for all $i,j$.
\end{lem}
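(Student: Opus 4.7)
The plan is to translate accessibility into a combinatorial statement about rotation pairs and restricted permutations, then prove the contrapositive. The central combinatorial observation, obtained by direct enumeration, is: given a rotation pair $(p_a,p_b)$ of $\pi|_p$ and a rotation pair $(q_c,q_d)$ of $\pi|_q$ (with $p_a<p_b$, $\pi(p_a)>\pi(p_b)$ and similarly for $q$), the combined $4$-tuple lies in one of six possible positional patterns, and under the two rotation constraints the restricted $\pi$-permutation on the four elements is one of six specific orderings. Enumerating shows: in the \emph{bracketing} patterns $p_a<q_c<q_d<p_b$ and $q_c<p_a<p_b<q_d$, every resulting restricted permutation is irreducible and different from $(4231)$; in the \emph{alternating} patterns $pqpq$ and $qpqp$, the only non-accessible restricted permutation is $(4231)$; in the \emph{separated} patterns $ppqq$ and $qqpp$, the non-accessible possibilities are exactly $(4231)$ and the reducible $(2143)$, the latter corresponding to $\max(\pi(p_a),\pi(p_b))<\min(\pi(q_c),\pi(q_d))$.

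Given this, assume the tuples are not accessible; up to swapping, $p_1<q_1$. The first step is to establish positional separation $p_4<q_1$. If positions interleave, I would use the rotation-pair structure of the seven permutations in the Rauzy class of $(4321)$ (each has at least three rotation pairs, and the extremal pair $(1,4)$ is a rotation in all of them except $(3142)$) to find rotation pairs of $\pi|_p$ and $\pi|_q$ realizing a bracketing positional pattern, thereby forcing accessibility by the enumeration above and contradicting the hypothesis. The interleaved position patterns admitting no bracketing rotation pair are the ``single-swap'' patterns such as $PPPQPQQQ$; these I would handle by exhibiting a rotation pair in an alternating position whose restricted permutation is not $(4231)$.

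The second step establishes $\pi(p_i)<\pi(q_j)$ for all $i,j$, given positional separation. Every rotation pair choice now yields the $ppqq$ configuration, so by non-accessibility the restricted permutation must be $(4231)$ or $(2143)$ for every such choice. If it is $(2143)$ for every rotation pair choice, full image separation follows: each such pair forces $\pi(p_a)<\pi(q_d)$, and sweeping through all rotation pairs (noting that each $p_i$ and each $q_j$ appears as ``high'' or ``low'' in some rotation pair, a fact verified across the seven Rauzy-class permutations) yields $\pi(p_i)<\pi(q_j)$ for all $i,j$. Otherwise some rotation pair choice gives $(4231)$; I would then vary the rotation pair selections within $\pi|_p$ and $\pi|_q$ and show that the ``uniformly non-accessible'' scenario cannot persist, producing some choice with an accessible restricted permutation --- a contradiction.

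The main obstacle is this last verification: ruling out the scenario where every rotation pair combination yields $(4231)$ or $(2143)$. This requires a finite but tedious case analysis across the $7\times 7$ possibilities for $(\pi|_p,\pi|_q)$ in the Rauzy class, checking in each configuration that the available rotation-pair flexibility is sufficient to break the bad patterns unless the $\pi$-images are already fully separated in the direction asserted by the lemma.
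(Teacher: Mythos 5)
Your reduction of accessibility to a pattern count is sound, and the enumeration you state is correct: with a rotation (inversion) pair chosen from each tuple, the bracketing positional patterns always give an irreducible restriction different from $(4231)$; the alternating patterns fail only at $(4231)$; and the separated patterns fail only at $(4231)$ and the reducible $(2143)$. This matches what the paper's proof implicitly relies on, and your ``sweep'' argument deducing $\pi(p_i)<\pi(q_j)$ for all $i,j$ from the uniform occurrence of $(2143)$ (using irreducibility of the restricted permutations to put every symbol into some inversion) is fine.

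The problem is that the proof stops exactly where the lemma's actual content begins. The two verifications you defer --- (a) that every interleaved positional configuration admits a choice of rotation pairs whose combined restriction is accessible, including the ``single-swap'' patterns such as $PPPQPQQQ$ where no bracketing configuration exists and one must produce an alternating $4$-tuple avoiding $(4231)$; and (b) that a positionally separated pair whose images interleave (your residual $(4231)$-in-$ppqq$ case) always admits an accessible choice --- are precisely the substance of the lemma, and you explicitly acknowledge you have not carried them out. The paper's proof \emph{is} this finite check: it splits into the cases $p_1<q_1<p_4$ versus $p_4<q_1$, and within the first into the image relations $\pi(p_i)<\pi(q_j)$ for all $i,j$, $\pi(p_i)>\pi(q_j)$ for all $i,j$, and the mixed case, in each instance exhibiting concrete $4$-tuples whose restrictions are $(3142)$, $(4132)$, $(4321)$, $(3421)$, $(4213)$, $(4312)$, $(3241)$, $(3214)$, etc., using irreducibility of $\pi$ on each tuple and of $\pi$ itself to guarantee the needed inversions exist. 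Without producing these explicit witnesses (or an equally concrete argument covering the $7\times 7$ table you mention), the proposal is a correct reduction plus an unexecuted case analysis, not a proof; in particular it is not yet established that the ``uniformly $(4231)$'' scenario cannot occur, which is the one configuration that would falsify the lemma if it could.
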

\begin{proof} 
  We assume $p_1<p_2<p_3<p_4$ and $q_1<...<q_4$ and $p_1<q_1$. We show
  the lemma by contradiction by considering numerous cases.
  
  We first handle the case that $p_1<q_1<p_4$.

  \smallskip
  \textbf{Subcase a:} If $\pi(p_i)<\pi(q_j)$ for all $i,j$ then either
  $$p_1<q_1<p_i<q_j \text{ or }p_1<q_1<q_j<p_i$$ where $\pi(p_i)<\pi(p_1)$ and $\pi(q_j)<\pi(q_1)$ $(p_i$ and $q_j$ both exist by the irreducibility of $\pi$). So we obtain the permutations $(3142)$ or $(4132)$ which are both in $\mathcal{R}(4321)$, and hence the tuples are accessible.

  \smallskip \textbf{Subcase b:} If $\pi(p_i)>\pi(q_j)$ for all $i,j$ then we can either pick $p_\ell<q_1<q_j<p_i$ where $\pi(q_j)<\pi(q_1)<\pi(p_\ell)<\pi(p_j)$ or $p_1<p_i<q_j<q_\ell$ and $\pi(q_\ell)<\pi(q_j)<\pi(p_i)<\pi(p_1)$. These correspond to permutations in $\mathcal{R}(4321)$.

  \smallskip \textbf{Subcase c:} So we may now treat the case $p_1<q_1<p_i$ and $\pi(p_\ell)<\pi(q_j)<\pi(p_k)$. If $j$ can be chosen to be $1$ there exists $q_a$ so that $\pi(q_a)<\pi(q_1)$. Also by the irreducibility of $\pi$ on $p_1,p_2,p_3,p_4$ we may choose $p_b$ so that $p_b<q_1$ and $\pi(p_b)>\pi(q_1)$ or $p_b>q_1$ and $\pi(p_b)<\pi(q_1)$. (Otherwise if $p_i<q_q<p_j$ then $\pi(p_i)<\pi(q_1)<\pi(p_j)$.)  If both occur 
 our permutation can by chosen to be $(4321)$ or $(3421).$ If there is only $p_b>q_1 $ so that $\pi(p_b)<\pi(q_1)$ then by the irreducibility of $\pi$ on $p_1,...,p_4$ there exists $q_1<p_c<p_b$ so that $\pi(p_b)<q_1<\pi(p_c)$. So we obtain a permutation corresponding to $(3214)$, $(3241)$ by choosing $q_1,q_a,p_c,p_b$ to be our symbols and treating the position of $q_a$ relative to $p_b,p_c$ we obtain the permutation $(4213),(4312)$ or $(3421)$. 
 If $j$ can not be chosen to be $1$ then we may choose our permutation to be $(4312)$.

  The next case is that $\pi(p_k)<\pi(q_j)<\pi(p_L)$ and is proved similarly.
  
  The last case is that $p_4<q_1$ and $\pi(p_i)>\pi(q_j)$ for all
  $i,j$. We pick $p_k<p_L$ and $q_i<q_j$ so that $\pi(p_L)<\pi(p_k)$
  and $\pi(q_j)<\pi(q_i)$. $\pi|(p_L,p_k,q_i,q_j)= (4321)$ and we are
  done. 
\end{proof}

We now begin the proof of Proposition~\ref{prop:accessible-chain}. If
the given $4$-tuples are not accessible, then we may assume (by
Lemma~\ref{lem:nonaccessible-order}) that $q_j>p_i$ and
$\pi(q_{j})>\pi(p_{i})$ for all $i,j$, and we do so from now on.

The proof proceeds by induction. To do that, we define the
\emph{distance} of two $4$-tuples $(q_1,\ldots,q_4),(p_1,\ldots,p_4)$
to be 
$$(\min_i\{q_i\}-\max_i\{p_i\},\min_i\{\pi(q_i)\}-\max_i\{\pi(p_i)\})$$ 
ordered by lexicographic order. The distance is at least $(1,1)$ by
our assumption and Lemma~\ref{lem:nonaccessible-order}.

If the distance is larger than that, then the following lemma allows
to decrease the distance.
\begin{lem} \label{lem:decr dist} If $P=(p_1,\ldots,p_4)$ and
  $Q=(q_1,\ldots,q_4)$ are not an accessible pair then there 
  exists $A=(a_1,\ldots,a_4)$ so that $A$ and $Q$ are accessible and
  the distance from $P$ to $A$ is strictly less than the distance from
  $P$ to $Q$. 
\end{lem}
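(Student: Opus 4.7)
By Lemma~\ref{lem:nonaccessible-order}, we may assume $p_i<q_j$ and $\pi(p_i)<\pi(q_j)$ for all $i,j$. Write $m=q_1$, $m'=\min_i\pi(q_i)$, $M=p_4$ and $M'=\max_i\pi(p_i)$, so the distance from $P$ to $Q$ equals $(m-M,\,m'-M')$, with both coordinates at least $1$. My plan is to use irreducibility of $\pi$ to locate an ``intermediate'' symbol $r$ and then build $A$ by replacing one entry of $Q$ by $r$, arranging matters so that $\pi|_A\in\mathcal R(4321)$, the distance to $P$ drops in the lexicographic order, and $A$ is accessible to $Q$.

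Irreducibility of $\pi$ applied to the proper initial segment $\{1,\dots,m-1\}$ (which contains $P$) gives either an element $r<m$ with $\pi(r)\geq m$, or, dually applied to the image-side segment $\{i:\pi(i)\leq m'-1\}$, an element $r\geq m$ with $\pi(r)<m'$. In the first case I would form $A$ by discarding one $q_{i_0}$ and inserting $r$; the index $i_0$ is selected according to where $\pi(r)$ sits among $\pi(q_1),\dots,\pi(q_4)$ so that $\pi|_A$ lands in the Rauzy class of $(4321)$. For instance, if $\pi|_Q=(4321)$ and $\pi(r)>\pi(q_1)$, then $A=\{r,q_1,q_2,q_3\}$ yields $\pi|_A=(4321)$; intermediate positions of $\pi(r)$ lead to other non-degenerate restrictions (e.g.\ $(4312)$), and one checks in each configuration that a suitable $q_{i_0}$ exists. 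Because $r<m$ we have $\min_i a_i\leq r<m=\min_i q_i$, so the first coordinate of the distance from $P$ to $A$ strictly decreases. The dual (image-side) case decreases the second coordinate with the first coordinate preserved, again improving the lexicographic distance.

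Accessibility of $A$ and $Q$ comes essentially for free: they share three of the $q_i$'s, so picking the pair $(r,q_i)$ from $A$ together with a disjoint pair from $Q$ produces a four-element set on which $\pi$ acts by a non-degenerate permutation that is a rotation on each of the two pairs. Should this choice yield the excluded pattern $(4231)$, there is enough slack either to swap to a different pair of $q_j$'s in $Q$ or to use a different intermediate symbol $r$ supplied by irreducibility, so $(4231)$ can always be avoided.

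The main obstacle will be the case enumeration required to verify that in every configuration of $\pi(r)$ relative to $\pi(q_1),\dots,\pi(q_4)$, a discard index $i_0$ exists making $\pi|_A\in\mathcal R(4321)$, while at the same time certifying accessibility of $A$ and $Q$ outside the forbidden pattern. Two subtle edge cases need attention: the situation in which every intermediate $r$ produced by the positional-side application of irreducibility happens to coincide with some $p_j$ (forcing us to fall back on the dual image-side construction), and the boundary configurations where $P$ and $Q$ are separated by exactly one unit in some coordinate (so that the intermediate $r$ provided is constrained, and one must check that the resulting $\pi|_A$ is still non-degenerate).
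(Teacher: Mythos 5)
Your overall strategy---replace one entry of $Q$ by an intermediate symbol $r$ supplied by irreducibility, split into a position-side and an image-side case, and argue that the lexicographic distance to $P$ drops---is the same as the paper's. But your position-side case rests on the wrong condition. Irreducibility of $\pi$ on $\{1,\dots,m-1\}$ only yields $r<m$ with $\pi(r)\geq m$; what the substitution actually requires is $\pi(r)>m'=\min_i\pi(q_i)$. Since $m'$ can be much larger than $m$, your $r$ may well satisfy $\pi(r)<m'$, in which case $r$ precedes every retained $q_i$ both in position and in image, so for \emph{every} choice of discarded index $i_0$ the restriction $\pi|_A$ fixes the first symbol and is reducible, hence never in $\mathcal{R}(4321)$. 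The paper's Case 1 hypothesis is precisely the existence of $r<m$ with $\pi(r)>\pi(q_i)$ for some $i$, and this can genuinely fail; when it fails, the paper deduces from that very failure that $\{1,\dots,m-1\}$ maps into $\{1,\dots,m'-1\}$, forcing $m<m'$, and a counting argument then produces an image-side $r>m$ with $r\notin Q$ and $\pi(r)<m'$. Your ``either/or'' framing obscures this logic: the two branches are not independently available alternatives, and the usability of the second must be derived from the failure of the first. (The edge cases you flag are not the real obstructions: $r$ coinciding with some $p_j$ is harmless---it simply means $A$ meets $P$ and the induction terminates via Lemma~\ref{lem:nonaccessible-order}.)

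Separately, the step you defer---``one checks in each configuration that a suitable $q_{i_0}$ exists''---is where most of the actual work lies, and it is not routine. The paper handles it by explicit sub-cases: when $\pi(r)$ sits among the images of $Q$ compatibly with $\pi(q_1)$ the restriction is unchanged, and otherwise one exhibits concrete $4$-tuples restricting to $(3142)$, $(4132)$, $(2413)$, $(3241)$ or $(2431)$, using structural facts such as that the string $12$ never occurs as a consecutive pair in a permutation of $\mathcal{R}(4321)$. Likewise, your claim that accessibility of $A$ and $Q$ ``comes essentially for free'' is asserted rather than proved: since $A$ and $Q$ share three entries, the candidate four-element sets are constrained, and ensuring both chosen pairs are rotations, the union is irreducible, and the pattern $(4231)$ is avoided does require an argument rather than an appeal to ``slack.''
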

\begin{proof} The proof is done in 2 cases. 

\textbf{Case 1}: In this case we assume there exists $r<\min\{q_i\}$ so that $\pi(r)>\pi(q_i)$ for some $i$.  
 Now if $\pi(r)<\pi(q_j)$ if and only if $\pi(q_1)<\pi(q_j)$ for $j\neq 1$, replace
 $q_1$  with $r$ and $\pi|(r,q_2,q_3,q_4)=\pi|(q_1,q_2,q_3,q_4)$. Call this Case 1'.
 Otherwise, there are two sub-cases. If $\pi(r)<\pi(q_1)$ then by the assumption of Case 1 there
 exists $q_i$ so that $\pi(q_i)<\pi(r)$. Since we are not in Case 1' there exists $q_j$ so that
 $\pi(r)<\pi(q_j)<\pi(q_1).$
 $\pi|(r,q_1,q_i,q_j)\in\{(3142),(4132)\}$, both of which are in the
 $\mathcal{R}(4321)$.  
 If $\pi(r)>\pi(q_1)$ because we are not in Case 1',
 there exists $q_i$ so that $\pi(q_1)<\pi(q_i)<\pi(r)$ and there
 exists $q_j$ so that $q_1<q_j<q_i$ so that $\pi|(r,q_1,q_i,q_j)\in
 \{(2413) ,(3241),(2431)\}$. (This is because the string $12$ never appears as a
 consecutive pair in a permutation in the Rauzy class of $(4321)$, since in this case the pair of intervals $I_1\cup I_2$ could be treated as one interval.)

 \textbf{Case 2}: If we are not in Case 1, then Lemma \ref{lem:switch} implies
 we can have an accessible four-tuple which has $\min\{q_i\}$ and  
 the absence of Case 1 implies $\min\{\pi(q_i)\}>\min\{q_i\}$. So some
 symbol greater than $\min\{q_i\}$ is sent before 
 $\min\{\pi(q_i)\}$ (by counting). The proof is now finished similar to Case 1. 
\end{proof}

Finally, the case of distance of $(1,1)$ is handled by the next lemma.
\begin{lem} \label{lem:switch} For any $i$ either there exists $r<i$
  so that $\pi(r)>\pi(i)$ or there exists $r>i$ so that
  $\pi(r)<\pi(i)$.  
\end{lem}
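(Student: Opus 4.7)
The plan is to argue by contradiction, extracting from the failure of the conclusion an invariant initial segment which violates irreducibility of $\pi$. Suppose there exists some index $i$ for which both alternatives fail. Since $\pi$ is a bijection, the failure of the first alternative means $\pi(r) < \pi(i)$ for every $r < i$, and the failure of the second means $\pi(r) > \pi(i)$ for every $r > i$.

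Next I would translate these pointwise inequalities into inclusions of sets. The first condition gives $\pi(\{1,\ldots,i-1\}) \subseteq \{1,\ldots,\pi(i)-1\}$; comparing cardinalities yields $i-1 \leq \pi(i)-1$, i.e.\ $i \leq \pi(i)$. Symmetrically, the second condition gives $\pi(\{i+1,\ldots,n\}) \subseteq \{\pi(i)+1,\ldots,n\}$, hence $n-i \leq n-\pi(i)$, i.e.\ $\pi(i) \leq i$. Combining these forces $\pi(i) = i$, and then the two set inclusions above must be equalities by cardinality. In particular $\pi(\{1,\ldots,i\}) = \{1,\ldots,i\}$.

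For this to contradict irreducibility of $\pi$, I need $i < n$; but if $i = n$ then the second alternative is automatically vacuous, and the failure of the first gives that $\pi$ is an increasing function on $\{1,\ldots,n\}$ with $\pi(n)$ maximal, forcing $\pi = \mathrm{id}$ on $\{1,\ldots,n-1,n\}$, which again yields an invariant proper initial segment (e.g.\ $\{1\}$, using $n \geq 2$). Similarly the boundary case $i = 1$ forces $\pi(1) = 1$ and so $\{1\}$ is $\pi$-invariant. In every case we have produced a proper initial segment $\{1,\ldots,k\}$ with $k < n$ preserved by $\pi$, contradicting the assumed irreducibility. Since no step involves any calculation beyond counting elements, I expect no serious obstacle; the only care needed is the treatment of the boundary values $i=1$ and $i=n$ where one of the two alternatives is vacuous.
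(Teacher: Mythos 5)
Your proposal is correct in substance and uses the same two ingredients as the paper's proof --- a cardinality count for the bijection $\pi$ together with irreducibility --- just packaged as a single contrapositive argument rather than the paper's case split on whether $\pi(i)=i$. One small repair is needed in your boundary case $i=n$: the failure of the first alternative there only says $\pi(r)<\pi(n)$ for all $r<n$, which forces $\pi(n)=n$; it does \emph{not} make $\pi$ increasing or equal to the identity, so you cannot conclude that $\{1\}$ is invariant. But $\pi(n)=n$ already makes $\{1,\ldots,n-1\}$ a $\pi$-invariant proper initial segment, which is all the contradiction with irreducibility requires.
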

\begin{proof} If $i=\pi(i)$ this simply follows by the irreducibility of $\pi$. 

  If $i\neq \pi(i)$ then this by a counting
  argument. Indeed if $i<\pi(i)$ then there exists $r>i$ so that
  $\pi(r)<\pi(i)$ because $\pi$ is a bijection on a finite set and
  there is no injection from a set of large cardinality to a set of
  smaller cardinality. 
\end{proof}

Let us see that this lemma implies
Proposition~\ref{prop:accessible-chain} with distance $(1,r)$ for any $r$. Let
$i=q_1$. So let us assume that there is $j<i$ so that $\pi(j)>\pi(i).$
Now similarly to Case 1 of Lemma~\ref{lem:decr dist} we can replace
one of the $q_k$ with $j$ and obtain a new permutation on 4 symbols in
the Rauzy class of $4$-IETs. Now since $p_4=q_1-1$ we have that that
the new pair violates Lemma~\ref{lem:nonaccessible-order}. The other
possibility is similar. 

This finishes the proof of Proposition~\ref{prop:accessible-chain}.

\begin{cor}\label{cor:zero} 
  The set of secret $4$-IETs of level $k$ is path connected for any $k$.
\end{cor}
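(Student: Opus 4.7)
My plan is to prove Corollary~\ref{cor:zero} by induction on $k$, using Proposition~\ref{prop:accessible-chain} and Lemma~\ref{lem:joining-accessible} as the main ingredients.

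For the base case $k=0$, let $T, S$ be two uniquely ergodic level $0$ secret $4$-IETs, with length vectors supported on $4$-tuples $(p_1,\dots,p_4)$ and $(q_1,\dots,q_4)$. Since $T$ and $S$ are minimal and uniquely ergodic, $\pi$ restricts to elements of $\mathcal{R}(4321)$ on both $4$-tuples. Proposition~\ref{prop:accessible-chain} provides an accessible chain of $4$-tuples connecting them, and Lemma~\ref{lem:joining-accessible} (whose proof keeps the path inside the set of level $0$ secret $4$-IETs, as it only moves among IETs whose length vectors are supported on a fixed $4$- or $8$-tuple of coordinates) produces paths of uniquely ergodic IETs for each consecutive accessible pair; concatenation gives the desired path. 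The case $n=4$ follows directly from Theorem~\ref{thm:main-for-4}.

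For the inductive step, assume the result holds at level $k-1$ for every non-degenerate permutation. Given a uniquely ergodic level $k$ secret $4$-IET $S$ with $L(S) = M(T, k) v$, where $v$ is supported on a $4$-tuple in $\Delta_{\pi'}$ with $\pi' = \pi(R^k T)$, I decompose $M(T, k) = M(T, k-1) M_1$ with $M_1 = M(R^{k-1}T, 1)$. Inspecting the explicit formulas for $M_1$ shows that it has exactly one ``special'' column at some index $j^*$ of the form $e_a + e_b$, and all other columns are single standard basis vectors (possibly reindexed). Therefore, whenever the support of $v$ avoids $j^*$, the vector $M_1 v$ still has $4$-element support, exhibiting $S$ as a level $k-1$ secret $4$-IET. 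If $v$ does not already avoid $j^*$, I apply the base case to $\pi'$: using Proposition~\ref{prop:accessible-chain}, deform $v$ in $\Delta_{\pi'}$ through uniquely ergodic level $0$ secret $4$-IETs for $\pi'$ to some $v'$ whose support avoids $j^*$. Mapping this deformation through $M(T, k)$ yields a path of uniquely ergodic level $k$ secret $4$-IETs for $\pi$ from $S$ to a level $k-1$ endpoint $S'$, and the inductive hypothesis connects $S'$ to any other level $k-1$ (hence level $k$) secret $4$-IET.

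The main obstacle is the deformation step: one must guarantee that for $n \geq 5$ there exists a $4$-tuple in $\{1,\dots,n\}\setminus\{j^*\}$ on which $\pi'$ acts as a permutation in $\mathcal{R}(4321)$, and that this target $4$-tuple is linked to the original one by an accessible chain. This should follow from a short combinatorial argument leveraging the non-degeneracy of $\pi'$ in the spirit of Lemma~\ref{lem:nonaccessible-order} and Proposition~\ref{prop:accessible-chain}. In the event that every admissible $4$-tuple for $\pi'$ contains $j^*$, one can fall back on deforming $v$ so that its coordinate at $j^*$ vanishes, reducing to a uniquely ergodic $3$-supported vector; this variant requires a little extra care since the intermediate IETs are then $3$-IETs, but can be arranged using the explicit $3$-IET rotation structure already exploited in the proof of Lemma~\ref{lem:joining-accessible}.
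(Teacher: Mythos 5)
Your base case is exactly the paper's: level $0$ is handled by Proposition~\ref{prop:accessible-chain} together with Lemma~\ref{lem:joining-accessible} (plus Theorem~\ref{thm:main-for-4} when $n=4$). Your inductive step, however, departs from the paper and contains a genuine gap. The paper treats $k>0$ by the mechanism of Corollary~\ref{cor:manypaths} and Lemma~\ref{lem:well connected}: a level-$k$ secret $4$-IET is of the form $M(T,r)v$, so one connects points by pushing level-$0$ paths forward through the projective maps $M(T,r)$ over the cell decomposition $\mathcal{P}_N$, rather than peeling off one Rauzy step at a time and lowering the level.

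The gap in your argument is the step you yourself flag as the ``main obstacle.'' Your reduction from level $k$ to level $k-1$ requires that for every non-degenerate $\pi'$ on $n\geq 5$ symbols and for the specific index $j^*$ determined by the last Rauzy step, there exists a $4$-tuple in $\{1,\dots,n\}\setminus\{j^*\}$ on which $\pi'$ restricts to an element of $\mathcal{R}(4321)$. Nothing in the paper supplies this: Proposition~\ref{prop:accessible-chain} only links $4$-tuples that are \emph{already} known to restrict to $\mathcal{R}(4321)$, and even the existence of a single such $4$-tuple for a general non-degenerate permutation is not established there (the proof of Theorem~\ref{thm:gen case} only invokes the existence of a \emph{pair} on which $\pi$ acts irreducibly). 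Asserting this ``should follow from a short combinatorial argument'' is not a proof, and your fallback (deforming $v$ so that its $j^*$-coordinate vanishes) is also only sketched; it requires connecting an interior point of a $4$-IET simplex to a uniquely ergodic point on a prescribed face through uniquely ergodic IETs, which is a nontrivial claim in its own right. A secondary issue: the support of $v$ need not be a $4$-tuple on which $\pi'$ restricts to an element of $\mathcal{R}(4321)$ (it may have fewer than four nonzero entries, or the restricted permutation may be degenerate, e.g.\ of rotation type such as $(3412)$), so Proposition~\ref{prop:accessible-chain} cannot be applied to it directly without first handling these degenerate configurations. As written, the inductive step therefore does not close.
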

\begin{proof}
 For secret $4$-IETs at level $0$ this follows by Lemma~\ref{lem:joining-accessible} and
 Proposition~\ref{prop:accessible-chain}. Otherwise, argue as in the proof of
 Corollary~\ref{cor:manypaths} with the triangulation $\mathcal{P}_N$.
\end{proof}

The final ingredient to the proof of Theorem~\ref{thm:gen case} is the
following lemma. 
\begin{lem}\label{lem:well connected} 
Let $T$ be an IET so that $R^k(T)$ is defined for all $k$. Let
$S_1,S_2$ be secret 4-IETs at depth $r$ which are not secret at depth $k$. Let
$S_1,S_2 \in M(T,r)\Delta$. Further assume that $M(R^k(T),r-k)$ is a
positive matrix. Then $S_1,S_2$ are path connected by uniquely ergodic
IETs in $M(T,k)\Delta$.  
\end{lem}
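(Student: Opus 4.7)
The plan is to reduce to Corollary~\ref{cor:zero} applied in the simplex $\Delta_{\pi(R^r T)}$ and transport the resulting path back up by multiplication with $M(T,r)$.

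Since $S_1, S_2 \in M(T,r)\Delta$ are secret $4$-IETs at depth $r$, I would write $L(S_i) = M(T,r)\, v_i$ where each $v_i$ is a non-negative vector with at most four positive entries. Interpreting $v_i$ as a (possibly degenerate) IET with permutation $\pi(R^r T)$, equivalently as the reduced $4$-IET obtained by forgetting its zero-length intervals, each $v_i$ is uniquely ergodic because unique ergodicity is preserved by Rauzy induction. I would then apply Corollary~\ref{cor:zero} inside $\Delta_{\pi(R^r T)}$ to obtain a continuous path $v:[0,1] \to \overline{\Delta}_{\pi(R^r T)}$ of uniquely ergodic secret $4$-IETs at level $0$ joining $v_1$ to $v_2$.

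Next, define $\Gamma(t)$ by normalising $M(T,r)\, v(t)$. The positivity assumption is used here: factoring $M(T,r)=M(T,k) \cdot M(R^k T, r-k)$, positivity of $M(R^k T, r-k)$ forces $M(R^k T, r-k)\, v(t)$ to have all coordinates strictly positive for every $t$, so $L(\Gamma(t))$ lies in the interior of $M(T,k)\Delta$. Consequently $\Gamma$ is a continuous path in $M(T,k)\Delta$ from $S_1$ to $S_2$ consisting of genuine, non-degenerate $d$-IETs whose first $k$ Rauzy steps agree with those of $T$. Continuity of $\Gamma$ follows from continuity of $v$ and the fact that $M(T,r)$ acts as a continuous projective map on the simplex.

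The main remaining step, and the principal obstacle, is to verify unique ergodicity of each $\Gamma(t)$. The natural approach is via the shadow Rauzy induction developed in Section~\ref{sec:shadows}: even when standard Rauzy induction of $\Gamma(t)$ may fail at some step between $k$ and $r$, the shadow expansion factors through $M(T,r)$ followed by the Rauzy expansion of the reduced $4$-IET underlying $v(t)$. Since $v(t)$ is uniquely ergodic as a $4$-IET, the associated tail of Rauzy matrices contracts the simplex to a point, and pre-multiplication by the fixed matrix $M(T,r)$ preserves this shrinking; Theorem~\ref{thm:ue-criterion} then yields unique ergodicity of $\Gamma(t)$. Equivalently, $\Gamma(t)$ is measurably isomorphic (via shadow Rauzy induction) to the reduced $4$-IET of $v(t)$, so unique ergodicity transfers directly. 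Once this is established, $\Gamma$ is the desired path and the lemma follows.
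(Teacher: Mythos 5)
Your proof is correct and follows essentially the same strategy as the paper's: reduce to the path-connectivity of secret $4$-IETs (Corollary~\ref{cor:zero}) and transport the resulting path by a Rauzy matrix, using positivity of $M(R^k(T),r-k)$ to guarantee that the transported path consists of genuine $d$-IETs lying in $M(T,k)\Delta$, and the induced-map correspondence to transfer unique ergodicity. The only differences are bookkeeping: the paper applies $R^k$ and invokes Corollary~\ref{cor:zero} at level $r-k$, whereas you push all the way down to level $0$ via $M(T,r)$ (which is what the positive-level case of Corollary~\ref{cor:zero} unwinds to anyway), and your passing appeal to Theorem~\ref{thm:ue-criterion} should be dropped in favor of your second, correct formulation via induced maps, since $R^n(\Gamma(t))$ need not be defined for all $n$ when $v(t)$ has zero entries.
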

The idea of this proof is that we build paths connecting $R^kS_1$ to $R^kS_2$ and the path we are interested in is the (projective) image of this path under $M(T,k)=M(S_i,k)$.
\begin{proof} Because
  $M(R^kT,r-k)$ is positive, it follows that if $S \in M(T,r)\Delta$
  is uniquely ergodic and $R^r(S)$ has two nonzero entries then it is
  not a secret $4$-IET at level $k$. 
  So it suffices to show that in the simplex the secret $4$-IETs at level
  at most $L$ are path connected for all $L$. This follows from
  Corollary~\ref{cor:zero}. Indeed, let $S_i'=R^k(S_i)$ and $P$ be the path given by 
  Corollary~\ref{cor:zero} connecting $S_1'$ and $S_2'$. $M(T,k)P$ is the path connecting $S_1$ and $S_2$ contained in $M(T,k)\Delta$.
\end{proof} 

\begin{proof}[Proof of Theorem \ref{thm:gen case}]
If $S,E$ have $R^k$ defined for all $k$ then the theorem follows
analogously to the proof of Theorem~\ref{thm:main4}
before via Lemma~\ref{limit path}. 
In this case the $p_i$ are secret $4$-IETs at level $i$ contained in
$M(S,i)\Delta$ or $M(E,i)\Delta$. The paths $P_i$ connecting $p_i$ and $p_{i+1}$ are contained in $M(S,m_i)$ or $M(E,q_i)$,
where the $m_i$ or $q_i$ is given 
by Lemma \ref{lem:well connected}. The $m_i$ and $q_i$ go to infinity with $i$. Since $S$ and $E$ are uniquely 
ergodic and have Rauzy induction defined for all $k$ we have that $M(S,i)\Delta$ and $M(E,i)\Delta$ contract to $S$ and $E$ respectively. This verifies Lemma~\ref{limit path}.

Otherwise, assume that $S$ does not have all powers of Rauzy induction
defined.  We may without loss of generality assume that $E$
does. We prove the theorem by assuming that we have inductively proved Theorem \ref{thm:gen case} for all $4\leq m<n$.
 There exists an interval $J$ so that the induced map $S|_J$ is a uniquely 
ergodic IETs on $k<d$ symbols. There is a $d \times k$ matrix $M$ so
that $L(S)=ML(S|_J)$ where in an abuse of notation $L(S|_J)$ is a
vector with $k$ entries. Because every irreducible permutation has a pair
where it acts irreducibly, we have a secret $4$-IET $S'$ of the form
$M\bar{v}$ for some $v$ whose non-zero entries are a subset of
$L(S|_J)$. By induction, the $k$-IET $S|_J$ can be joined by a path of
uniquely ergodic IETs to $S'$. Applying $M$ to this path connecting $S|_J$ to $S'$ we obtain 
a path connecting a $S$ to a secret 4-IET (with length vector given by $ML(S')$). 
By the first case, $E$ is path connected by
uniquely ergodic IETs to a secret $4$-IET of some depth and this secret
$4$-IET is path connected by uniquely ergodic IETs to every uniquely
ergodic secret $4$-IET. We have just shown that one of these is path connected  $S$ by a path of uniquely ergodic IETs.
\end{proof}

\appendix
\section{Composite paths}
\label{sec:composite-paths}
In Figure~\ref{fig:diagram-a}
\begin{description}
\item[$\alpha1$] $(4132)\to(3142)\to(3142)\to(4132)$
\item[$\alpha2$] $(4132)\to(4213)\to(4321)\to(2431)\to(3241)\to(4321)$
\item[$\alpha3$] $(2431)\to(3241)\to(3241)\to(4321)$
\item[$\alpha4$] $(2431)\to(2413)\to(2431)$
\item[$\alpha5$] $(4132)\to(4213)\to(4321)$
\end{description}
In Figure~\ref{fig:diagram-b}
\begin{description}
\item[$\beta1$] $(2431)\to(2413)\to(2413)\to(2431)$
\item[$\beta2$] $(2431)\to(3241)\to(4321)\to(4132)\to(4213)\to(4321)$
\item[$\beta3$] $(4132)\to(4213)\to(4213)\to(4321)$
\item[$\beta4$] $(4132)\to(3142)\to(4132)$
\item[$\beta5$] $(2431)\to(3241)\to(4321)$
\end{description}
In Figure~\ref{fig:diagram-cd}
\begin{description}
\item[$\gamma1$] $(2431)\to(3241)\to(3241)\to(4321)$
\item[$\gamma2$] $(2431)\to(2413)\to(2431)$
\item[$\gamma3$] $(4132)\to(3142)\to(3142)\to(4132)$
\item[$\gamma4$] $(4132)\to(4213)\to(4321)$
\item[$\gamma5$] $(2431)\to(3241)\to(4321)$
\item[$\gamma6$] $(2431)\to(2413)\to(2413)\to(2431)$
\item[$\gamma7$] $(4132)\to(3142)\to(4132)$
\item[$\gamma8$] $(4132)\to(4213)\to(4213)\to(4321)$
\end{description}
In Figure~\ref{fig:diagram-efgh}
\begin{description}
\item[$\xi1$] $(4213)\to(4213)\to(4321)$
\item[$\xi2$] $(2413)\to(2413)\to(2431)\to(2413)\to(2413)\to(2431)$
\item[$\xi3$] $(2431)\to(3241)\to(3241)\to(4321)\to(4132)\to(4213)\to(4321)$
\item[$\xi4$] $(2431)\to(2413)\to(2413)\to(2431)$
\item[$\xi5$] $(3241)\to(3241)\to(4321)$
\item[$\xi6$] $(3142)\to(3142)\to(4132)\to(3142)\to(3142)\to(4132)$
\item[$\xi7$] $(4132)\to(4213)\to(4213)\to(4321)\to(2431)\to(3241)\to(4321)$
\item[$\xi8$] $(4132)\to(3142)\to(3142)\to(4132)$
\end{description}

\end{document}